\theoremstyle{plain}
\newtheorem{thm}{Theorem}[section]
\newtheorem*{mainthm}{Main Theorem}
\newtheorem{lem}[thm]{Lemma}
\newtheorem{prop}[thm]{Proposition}
\newtheorem{cor}[thm]{Corollary}
\theoremstyle{definition}
\newtheorem{defn}[thm]{Definition}
\newtheorem{rem}[thm]{Remark}
\newcommand{\Id}{\operatorname{Id}}
\newcommand{\Jac}{\operatorname{Jac}}
\newcommand{\loc}{\operatorname{loc}}
\newcommand{\rot}{\operatorname{rot}}
\numberwithin{equation}{section}
\newcommand{\thmref}[1]{Theorem~\ref{#1}}
\newcommand{\propref}[1]{Proposition~\ref{#1}}
\newcommand{\lemref}[1]{Lemma~\ref{#1}}
\newcommand{\corref}[1]{Corollary~\ref{#1}}
\newcommand{\figref}[1]{Figure~\ref{#1}}
\newcommand{\secref}[1]{Section~\ref{#1}}
\newcommand{\remref}[1]{Remark~\ref{#1}}
\newcommand{\bbN}{{\mathbb N}}
\newcommand{\bbZ}{{\mathbb Z}}
\newcommand{\bbQ}{{\mathbb Q}}
\newcommand{\bbR}{{\mathbb R}}
\newcommand{\bbC}{{\mathbb C}}
\newcommand{\bbD}{{\mathbb D}}
\newcommand{\cR}{{\mathcal R}}
\newcommand{\cD}{{\mathcal D}}
\newcommand{\cC}{{\mathcal C}}
\newcommand{\cM}{{\mathcal M}}
\newcommand{\cU}{{\mathcal U}}
\newcommand{\cG}{{\mathcal G}}
\newcommand{\cS}{{\mathcal S}}
\newcommand{\cT}{{\mathcal T}}
\newcommand{\bfR}{{\mathbf R}}
\newcommand{\bfx}{{\mathbf x}}
\newcommand{\bfq}{{\mathbf q}}
\newcommand{\bfs}{{\mathbf s}}
\newcommand{\bfv}{{\mathbf v}}
\title[Structural Instability in the Semi-Siegel H\'enon Family]{Structural Instability of Semi-Siegel H\'enon maps}
\author{Michael Yampolsky and Jonguk Yang}
\thanks{M. Y. was partially supported by NSERC Discovery grant. J. Y. carried out part of this work as a visiting member of the Fields Institute for Mathematical Sciences.}
\begin{document}

\begin{abstract}
We show that the dynamics of sufficiently dissipative semi-Siegel complex H\'enon maps with golden-mean rotation number is not $J$-stable in a very strong sense. By the work of Dujardin and Lyubich, this implies that the Newhouse phenomenon occurs for a dense $G_\delta$ set of parameters in this family. Another consequence is that the Julia sets of such maps are disconnected for a dense set of parameters.
\end{abstract}

\maketitle

\section*{Foreword}

Complex quadratic H\'enon maps have emerged as the key object of study in multi-dimensional Complex Dynamics. They occupy a similar place  to that of quadratic polynomials in one-dimensional dynamics. Indeed, highly dissipative H\'enon maps are naturally seen as small perturbations of quadratics -- and the underlying one-dimensional dynamics plays a major role in their study. As an example, Fornaess and Sibony \cite{FoSi}, and Hubbard and Oberste-Vorth \cite{HuOV} studied highly dissipative H\'enon maps with an attracting fixed point, and showed that the dynamics of such maps on the Julia set naturally projects on the dynamics of hyperbolic quadratic polynomials in the main component of the Mandelbrot set. Thus, the same topological model applies to the dynamics of all such two-dimensional maps. A parallel result for dissipative H\'enon maps with a semi-parabolic fixed point (that is, having an eigenvalue of the form $e^{2\pi i r},\; r\in\bbQ$) was later proven by Radu and Tanase \cite{RaTa}.

In the present paper we consider another type of examples with a semi-neutral fixed point, the semi-Siegel dissipative H\'enon maps. In the case when the neutral eigenvalue is equal to  $e^{2\pi i r}$, where $r=(\sqrt{5}+1)/2$ is the golden mean, Gaidashev, Radu, and the first author \cite{GaRaYam} recently showed that  the Siegel disk of the H\'enon map is bounded by a topological circle. This could be seen as a natural step towards describing the topological model of the dynamics of the two-dimensional map -- and leads to asking whether it is again fibered over the underlying one-dimensional dynamics.

As we will see, this is not the case. In fact, semi-Siegel H\'enon maps with golden-mean rotation numbers are structurally unstable in a very strong sense. The precise formulation of our result follows below. Combined with the work of Dujardin and Lyubich, our result has two interesting, previously unknown corollaries. First, the Newhouse phenomenon occurs for a dense $G_\delta$ set of parameters in the semi-Siegel H\'enon family. Second, for a dense set of parameters values, the golden-mean semi-Siegel H\'enon Julia set is disconnected.

\subsection*{Acknowledgement}

We would like to thank E. Bedford, M. Martens, and R. Radu for helpful discussions. We are grateful to R. Dujardin for pointing out the connections to his work with M. Lyubich, which enabled us to strengthen the results in an earlier version of this work.

\section{Introduction}
%
Consider the {\it complex quadratic H\'enon family}:
$$
F_{c,a}(x,y):=(x^2+c - ay,x) \hspace{5mm} \text{for } c \in \mathbb{C} \text{ and } a \in \mathbb{C} \setminus \{0\}.
$$
For $a\neq 0$, the map $F_{c,a}$ is an automorphism of $\bbC^2$ with a constant Jacobian:
$$
\Jac F_{c,a} \equiv a.
$$
Furthermore, we have
$$
F_{c,a}(x,y) \to (f_c(x),x)
\hspace{5mm} \text{as} \hspace{5mm}
a \to 0,
$$
where
$$
f_c(x) := x^2 +c \hspace{5mm} \text{for } c \in \bbC
$$
is the standard one-dimensional {\it quadratic family}. In this paper, we will always assume that $F_{c,a}$ is a {\it sufficiently dissipative} map (i.e. $|a| < \bar\epsilon \ll 1$). 

As is common, will refer to the $x$ coordinate of a point in $(x,y)\in \bbC^2$ as ``horizontal'', and the $y$ coordinate as ``vertical''. Figure~\ref{fig:henon} gives an intuition on how vertical and horizontal planes are mapped by a H\'enon map.
\begin{figure}[h]
\centering
\includegraphics[scale=0.3]{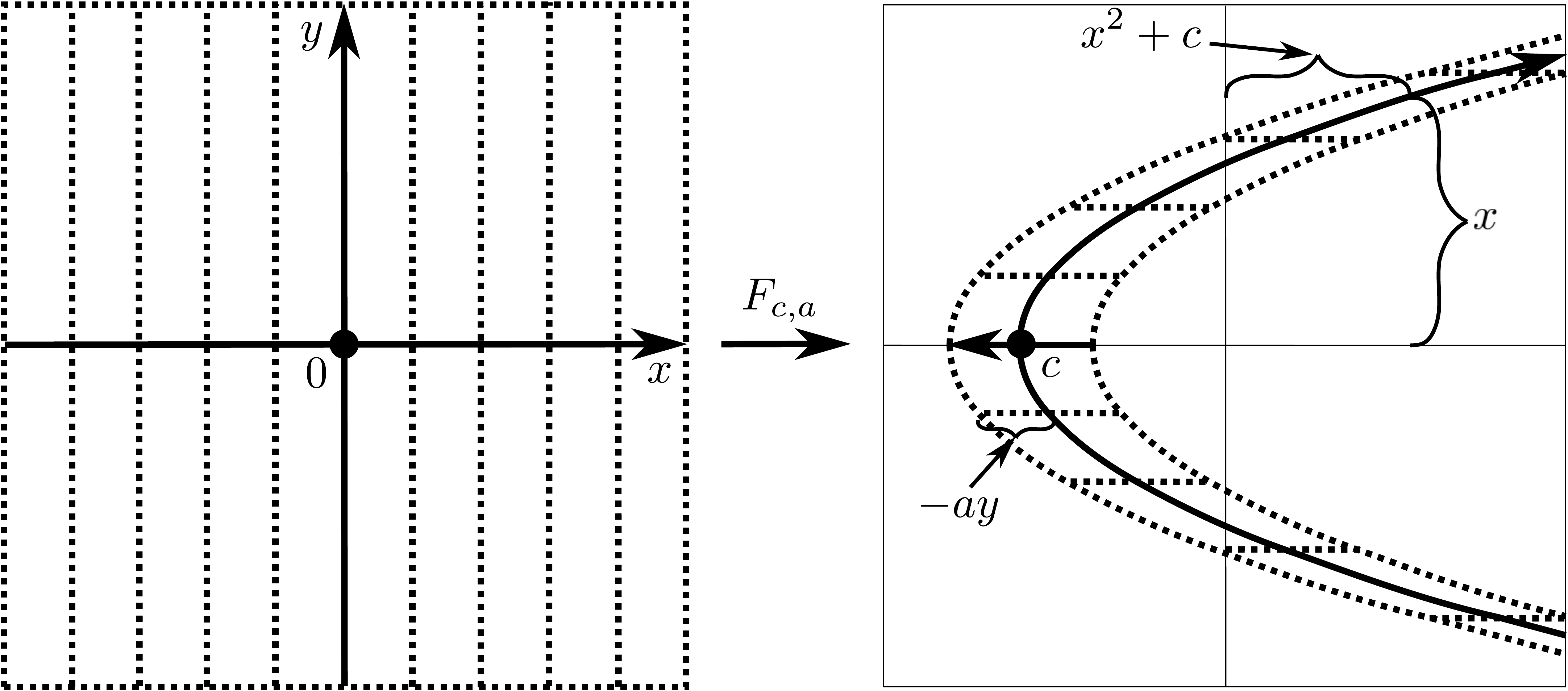}
\caption{A H\'enon map $F_{c,a}$. Note that the $x$-coordinate plane is mapped to the graph of $f_c$, and that the vertical planes are scaled uniformly by $-a$, and then mapped to horizontal planes.}
\label{fig:henon}
\end{figure}

In one-dimensional dynamics, the filled Julia set is defined as the set of points with bounded orbits. Since $F_{c,a}$ is an automorphism,  the analogous definitions are made both for forward and for backward orbits:
$$
K^\pm(F_{c,a}) := \{\bfx \in \bbC^2 \, | \, \{F_{c,a}^{\pm n}(\bfx)\}_{n \in \bbN} \text{ is bounded}\};
$$
the intersection of these sets is denoted by
$$
K(F_{c,a}) := K^+(F_{c,a}) \cap K^-(F_{c,a}).
$$
The forward and backward Julia sets are 
$$
J^\pm(F_{c,a}) := \partial K^\pm(F_{c,a}),$$
and the Julia set is defined as
$$
J(F_{c,a}) := J^+(F_{c,a}) \cap J^-(F_{c,a}).
$$

Let $\cS(F_{c,a})$ be the set of saddle periodic points of $F_{c,a}$. For $\bfs \in \cS(F_{c,a})$, we denote its stable and unstable manifold by $W^s(\bfs)$ and $W^u(\bfs)$ respectively. An intersection point $\bfq \in W^s(\bfs_1) \cap W^u(\bfs_2)$ with $\bfs_1, \bfs_2 \in \cS(F_{c,a})$ is said to be {\it homoclinic} if $\bfs_1 = \bfs_2$, and {\it heteroclinic}  otherwise.

In dimension one, it is well-known that the set of repelling periodic points is dense in the Julia set of a rational map. This motivates the following alternate notion of the Julia set for $F_{c,a}$:
$$
J^*(F_{c,a}) := \overline{\cS(F_{c,a})} \subset J(F_{c,a}).
$$
It is an open problem (posed by J.H. Hubbard) whether $J^*(F_{c,a}) = J(F_{c,a})$.

\subsection{J-stability}

Consider a holomorphic family $(F_\lambda)_{\lambda \in \Lambda}$ of dissipative H\'enon maps $F_\lambda = F_{c_\lambda, a_\lambda}$ (both $\lambda\mapsto c_\lambda$ and $\lambda\mapsto a_\lambda$ are holomorphic functions)  over a connected complex manifold $\Lambda$. We say that this family is {\it $J$-stable} if its members are topologically conjugate on $J$, and the conjugating map depends continuously on $\lambda$.

Recall that an {\it (unbranched) holomorphic motion} of a set $X \subset \bbC^d$ over $\Lambda$ is a family of mappings $\phi_\lambda : X \to \bbC^d$ such that
\begin{enumerate}[i)]
\item $\phi_{\lambda_0} \equiv \Id_X$ for some $\lambda_0 \in \Lambda$;
\item $\lambda \mapsto \phi_\lambda(x)$ is holomorphic for a fixed $x \in X$; and
\item $x \mapsto \phi_{\lambda}(x)$ is injective for a fixed $\lambda \in \Lambda$.
\end{enumerate}
If a family $\phi_\lambda$ satisfies i) and ii), but not iii), we say that it is a {\it branched holomorphic motion}. An unbranched or branched holomorphic motion is said to be {\it equivariant} if
$$
\phi_\lambda \circ F_{\lambda_0}(x) = F_\lambda \circ \phi_\lambda(x)
\hspace{5mm} \text{for} \hspace{5mm}
x \in X.
$$

The true power of holomorphic motions lies in their extension properties. For $d=1$, we have the following classical result of Ma\~n\'e-Sad-Sullivan and Lyubich \cite{MSS,Lyu}:

\begin{thm}[\bf $\lambda$-lemma]
An unbranched holomorphic motion of $X \subset \bbC$ extends continuously to an unbranched holomorphic motion of $\overline{X}$.
\end{thm}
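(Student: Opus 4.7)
The plan is to reduce to the case $\Lambda = \bbD$ with base point $\lambda_0 = 0$, since continuous extension to $\overline{X}$ is a local question on $\Lambda$. If $|X| \leq 2$ the claim is vacuous, so I fix three distinct points $a, b, c \in X$ and post-compose $\phi_\lambda$ with the $\lambda$-holomorphic family of M\"obius maps $M_\lambda$ sending $(\phi_\lambda(a), \phi_\lambda(b), \phi_\lambda(c))$ to $(0, 1, \infty)$. The new family $\Phi_\lambda := M_\lambda \circ \phi_\lambda$ is still holomorphic in $\lambda$ and injective in $x$, and by construction pins $a, b, c$ at $0, 1, \infty$ for every $\lambda$; I temporarily sacrifice the normalization at $\lambda_0 = 0$, recovering the extension of $\phi$ at the end by composing with $M_\lambda^{-1}$.

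Injectivity forces $\lambda \mapsto \Phi_\lambda(x)$ into the three-punctured sphere $\bbC \setminus \{0, 1, \infty\}$ for each $x \in X \setminus \{a, b, c\}$. Equipping the target with its hyperbolic metric $\rho$, the Schwarz--Pick inequality yields
\[
\rho(\Phi_\lambda(x), \Phi_\mu(x)) \leq d_\bbD(\lambda, \mu)
\]
uniformly in $x$, so by Montel's theorem the family $\{\Phi_\cdot(x)\}_{x \in X \setminus \{a, b, c\}}$ is normal. Since $\rho$ is comparable to the spherical metric on compact subsets of $\bbC \setminus \{0, 1, \infty\}$, this produces a common modulus of continuity for $\lambda \mapsto \Phi_\lambda(x)$ on every compact subdisk of $\bbD$, uniform in $x$.

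To extend to $x_\infty \in \overline{X} \setminus X$, pick $x_n \in X$ with $x_n \to x_\infty$; by normality $\{\Phi_\cdot(x_n)\}$ has subsequential limits, each satisfying $\psi(0) = M_0(x_\infty)$. Uniqueness of the limit is the main technical step, which I handle by iterated Hurwitz. Suppose two subsequential limits $\psi_1, \psi_2$ arise along $(x_{n_j})$ and $(y_k)$. For each fixed $k$, the maps $\lambda \mapsto \Phi_\lambda(x_{n_j}) - \Phi_\lambda(y_k)$ are nowhere zero on $\bbD$ by injectivity, so by Hurwitz's theorem their $j \to \infty$ limit $\psi_1 - \Phi_\cdot(y_k)$ is identically zero or nowhere zero on $\bbD$. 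Identical vanishing would give $M_0(y_k) = \psi_1(0) = M_0(x_\infty)$, forcing $y_k = x_\infty \in X$, which is impossible. Hence this difference is nowhere zero for every $k$; a second application of Hurwitz as $k \to \infty$ renders $\psi_1 - \psi_2$ either nowhere zero or identically zero, and since it vanishes at $0$ it must be identically zero, so $\psi_1 \equiv \psi_2$. I define $\widetilde{\Phi}_\lambda(x_\infty)$ to be this common limit, and for $x_\infty \in X$ I set $\widetilde{\Phi}_\lambda(x_\infty) := \Phi_\lambda(x_\infty)$.

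It remains to verify that $\widetilde{\Phi}$ is an unbranched holomorphic motion of $\overline{X}$ (after undoing the M\"obius normalization). Holomorphicity in $\lambda$ follows from locally uniform convergence of the defining sequences, and $\widetilde{\Phi}_0 = M_0|_{\overline{X}}$ is continuous. Joint continuity on $\bbD \times \overline{X}$ combines the $x$-uniform $\lambda$-equicontinuity of the second paragraph with the pointwise $x$-continuity just established. For injectivity in $x$ over $\overline{X}$: if $\widetilde{\Phi}_\lambda(x_\infty) = \widetilde{\Phi}_\lambda(y_\infty)$ for $x_\infty \neq y_\infty$, approximate by $x_n \to x_\infty$, $y_n \to y_\infty$ in $X$; the differences $\Phi_\cdot(x_n) - \Phi_\cdot(y_n)$ are nowhere zero by injectivity, so by Hurwitz their limit $\widetilde{\Phi}_\cdot(x_\infty) - \widetilde{\Phi}_\cdot(y_\infty)$ is nowhere zero or identically zero; vanishing at $\lambda$ forces the latter, which at $\lambda = 0$ gives $M_0(x_\infty) = M_0(y_\infty)$, contradicting $x_\infty \neq y_\infty$. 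Composing with $M_\lambda^{-1}$ finally restores $\tilde{\phi}_0 = \Id$. The principal obstacle throughout is the iterated Hurwitz step for uniqueness of limits; the remaining pieces are essentially bookkeeping.
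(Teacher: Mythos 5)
The paper does not prove this statement; it is cited to Ma\~n\'e--Sad--Sullivan and Lyubich. Your argument is essentially the classical MSS proof of the $\lambda$-lemma, and it is correct: normalize three points to $0,1,\infty$ by a $\lambda$-holomorphic M\"obius family so that injectivity forces the remaining orbits into the thrice-punctured sphere, invoke Schwarz--Pick in the hyperbolic metric and Montel normality for $x$-uniform equicontinuity in $\lambda$, then use Hurwitz's theorem (applied to differences) to pin down limits uniquely and to propagate injectivity to the closure. One small point of precision: the normal limits $\psi$ are \emph{a priori} maps into $\hat\bbC$, and the complex differences in your Hurwitz step are only meaningful once you know $\psi$ omits $\{0,1,\infty\}$; this does hold because $\psi(0)=M_0(x_\infty)\notin\{0,1,\infty\}$ when $x_\infty\notin\{a,b,c\}$, so by the spherical Hurwitz theorem $\psi$ cannot be constant at a puncture and hence maps into $\bbC\setminus\{0,1\}$. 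Likewise, "$\rho$ comparable to the spherical metric on compacts" should be upgraded to the global inequality that the hyperbolic metric of the thrice-punctured sphere dominates the spherical metric everywhere (including near the punctures), which is what actually yields the $x$-uniform modulus of continuity; alternatively, derive equicontinuity directly from Montel normality. With those glosses, the proof is sound.
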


Holomorphic motions were used in \cite{MSS,Lyu} for studying $J$-stability in dimension one. Namely, suppose the repelling periodic points persist in a holomorphic family of rational maps. Then these points move holomorphically. Using the $\lambda$-lemma, this motion can be extended to the entire Julia set--thereby supplying the required continuously varying conjugacy map for structural stability.

In \cite{DuLy}, Dujardin and Lyubich carry out this approach in a higher dimensional setting. The principal difficulty there is that for $d \geq 2$, the $\lambda$-lemma is no longer true: an unbranched holomorphic motion of a set may not extend to its closure; and even if it does, the resulting motion of the closure may become branched. To allow for such a possibility, they define the following alternative notion of stability.

\begin{defn}[\cite{DuLy}]
A holomorphic family $(F_\lambda)_{\lambda \in \Lambda}$ is {\it weakly $J^*$-stable} if there exists an equivariant branched holomorphic motion of $J^*(F_\lambda)$. 

A map $F_{\lambda_1}$ and its corresponding parameter $\lambda_1 \in \Lambda$ are said to be {\it weakly $J^*$-stable} if $(F_\lambda)_{\lambda \in U}$ is weakly $J^*$-stable for some neighborhood $U \subset \Lambda$ of $\lambda_1$; otherwise, we say that $(F_\lambda)_{\lambda \in \Lambda}$ {\it bifurcates} at $\lambda_1$.
\end{defn}

\begin{thm}[\cite{DuLy}]\label{weak stability}
Let $(F_\lambda)_{\lambda \in \Lambda}$ be a holomorphic family of dissipative H\'enon maps. The following are equivalent:
\begin{enumerate}[i)]
\item The family $(F_\lambda)_{\lambda \in \Lambda}$ is weakly $J^*$-stable.
\item The set $J^*(F_\lambda)$ moves continuously in the Hausdorff topology.
\item The type of every periodic point does not change (saddle, attracting, semi-indifferent).
\item Every saddle point moves under an equivariant unbranched holomorphic motion.
\item Every homoclinic or heteroclinic intersection point moves under an equivariant unbranched holomorphic motion. Consequently, any homoclinic or heteroclinic tangency must persist in the whole family.
\end{enumerate}
\end{thm}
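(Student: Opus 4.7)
The plan is to establish the five conditions as equivalent via a cyclic chain of implications (i) $\Rightarrow$ (ii) $\Rightarrow$ (iii) $\Rightarrow$ (iv) $\Rightarrow$ (v) $\Rightarrow$ (i). The main ingredients I would invoke are: (a) periodic points of fixed period form an analytic variety cut out by $F_\lambda^n(\bfx) = \bfx$; (b) stable and unstable manifolds of saddles depend holomorphically on $\lambda$ via a Hadamard--Perron theorem with parameters; and (c) for dissipative H\'enon maps, transverse homoclinic intersections are dense in $J^*(F_\lambda)$, by the work of Bedford--Lyubich--Smillie.

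I would begin with the easier implications. For (i) $\Rightarrow$ (ii), a branched holomorphic motion depends continuously on $\lambda$ pointwise, and since $J^*$ is compact this upgrades to Hausdorff continuity by a standard diagonal argument. For (ii) $\Rightarrow$ (iii), a change in the type of a periodic point (a saddle becoming attracting, via a semi-indifferent intermediate) would force the sudden collapse of a one-dimensional invariant manifold attached to that saddle, producing a discontinuity of $J^*$ in the Hausdorff topology. For (iii) $\Rightarrow$ (iv), the implicit function theorem applied to $F_\lambda^n(\bfx) - \bfx = 0$ at a saddle, whose eigenvalues stay off the unit circle by hypothesis, yields a unique holomorphic continuation $\lambda \mapsto \bfs(\lambda)$, automatically equivariant.

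For (iv) $\Rightarrow$ (v), I would use that the local stable and unstable manifolds $W^{s,u}(\bfs(\lambda))$ depend holomorphically on $\lambda$, and are then globalized as holomorphic images of $\bbC$ by iteration. A transverse intersection point $\bfq \in W^s(\bfs_1) \cap W^u(\bfs_2)$ can then be continued holomorphically by solving a holomorphic equation with nonzero Jacobian, with equivariance inherited from invariance of the manifolds. A tangential intersection must persist across $\Lambda$: otherwise nearby transverse intersections accumulating on the tangency would be forced into a branched motion, contradicting the unbranched hypothesis.

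The main step, and the principal obstacle, is (v) $\Rightarrow$ (i). Here I would form the graph of the motion on the dense set of homoclinic intersections inside $\Lambda \times \bbC^2$, take its closure, and apply Montel's theorem to extract a limit assignment $\tilde\phi_\lambda : J^*(F_{\lambda_0}) \to \bbC^2$. The hard part is well-definedness and holomorphic dependence at an arbitrary limit point: different approximating sequences of homoclinic points must be shown to yield the same value, and the resulting family must remain holomorphic in $\lambda$. One cannot appeal to the classical $\lambda$-lemma, which fails for $d \geq 2$; instead I would combine equivariance with hyperbolic estimates along saddle orbits to pin down the limit. Injectivity may fail in the closure, which is precisely why the \emph{branched} holomorphic motion framework of Dujardin and Lyubich is needed in higher dimensions.
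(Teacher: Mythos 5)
The statement you are proposing to prove is cited in the paper as a result of Dujardin and Lyubich and is not proved there; the paper simply quotes it and uses it as a black box (in particular, part (v) is what is invoked in the proof of the Main Theorem). So there is no internal proof to compare your proposal against, and the right reference is Dujardin--Lyubich \cite{DuLy} itself.

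As a standalone proof attempt, your cyclic chain is a sensible decomposition, but two steps as written are gaps rather than proofs. First, (ii) $\Rightarrow$ (iii): the sentence ``a change of type would force the sudden collapse of a one-dimensional invariant manifold, producing a discontinuity of $J^*$'' is a heuristic, not an argument. A saddle can lose its unstable manifold while $J^*$ remains Hausdorff-close, unless one actually exhibits a definite portion of $J^*$ that vanishes; in \cite{DuLy} this is handled via a no-cycle/persistence argument and is not an immediate consequence of the intermediate value of the multiplier passing through the unit circle. Second, and more seriously, (v) $\Rightarrow$ (i) is the substance of the theorem, and ``take the closure of the graph and apply Montel, then combine equivariance with hyperbolic estimates to pin down the limit'' does not yet address the central difficulty: you must show that the limiting assignment $\lambda \mapsto \phi_\lambda(x)$ is a well-defined \emph{single-valued} holomorphic function for every $x \in J^*$, even though injectivity may be lost. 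Normality gives you subsequential limits, not a canonical choice, and equivariance alone does not select one when several distinct homoclinic sequences converge to the same $x$. This is precisely where Dujardin--Lyubich develop their machinery of branched holomorphic motions (and where the density of transverse homoclinic points, the correct notion of filled Julia set motion, and their Proposition on equicontinuity of the homoclinic web all enter), so you should not expect an elementary fix to that paragraph. Flagging the difficulty, as you do, is appropriate, but the proof as sketched does not close it.
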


\begin{rem}
It is easy to see that structural $J$-stability (or even just structural $J^*$-stability) implies weak $J^*$-stability. However, it is not known whether the converse is true.
\end{rem}

\subsection{Stability in the attracting and semi-parabolic family}

A H{\'e}non map $F_{c,a}$ is determined uniquely by the multipliers $\mu$ and $\nu$ at a fixed point. In particular, we have:
$$
a = \mu\nu
\hspace{5mm} \text{and} \hspace{5mm}
c=(1+\mu\nu)\left(\frac{\mu}{2}+\frac{\nu}{2}\right)-\left(\frac{\mu}{2}+\frac{\nu}{2}\right)^{2}.
$$
When convenient, we will write $F_{\mu,\nu}$ instead of $F_{c,a}$ to denote a H\'enon map. Note that we have
$$
F_{\mu, \nu}(x,y) \to (f_\mu(x), x)
\hspace{5mm} \text{as} \hspace{5mm}
\nu \to 0,
$$
where $f_\mu$ denotes the quadratic polynomial that has a fixed point of multiplier $\mu$.

Suppose $|\nu| < 1$. If $|\mu| \leq 1$, then $F_{\mu, \nu}$ can be classified into one of four types:
\begin{enumerate}[i)]
\item {\it attracting} if $|\mu| <1$;
\item {\it semi-parabolic} if $\mu = e^{2\pi \frac{p}{q}i}$ for some $p/q \in \bbQ / \bbZ$;
\item {\it semi-Siegel} if $\mu = e^{2\pi \theta i}$ for some $\theta \in (\bbR \setminus \bbQ)/\bbZ$, and $F_{\mu, \nu}$ is locally linearizable at the fixed point; or
\item {\it semi-Cremer} if $\mu = e^{2\pi \theta i}$ for some $\theta \in (\bbR \setminus \bbQ)/\bbZ$, and $F_{\mu, \nu}$ is not locally linearizable at the fixed point.
\end{enumerate}

The dynamics of attracting or semi-parabolic H\'enon maps are well-understood. In particular, the following results hold:

\begin{thm}[\cite{FoSi,HuOV}]
Let $\mu_0 \in \bbD \setminus \{0\}$. Then there exists $\bar\epsilon >0$ such that the holomorphic family $(F_{\mu_0, \nu})_{\nu \in \bbD_{\bar\epsilon} \setminus \{0\}}$ of attracting H\'enon maps is structurally $J$-stable.
\end{thm}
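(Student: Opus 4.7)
The plan is to reduce the two-dimensional dynamics to the one-dimensional hyperbolic dynamics of $f_{\mu_0}$ and then invoke classical structural stability of hyperbolic basic sets. Since $|\mu_0| < 1$, the polynomial $f_{\mu_0}$ has an attracting fixed point and lies in the main cardioid of the Mandelbrot set. Consequently $f_{\mu_0}$ is uniformly hyperbolic on its Julia set $J(f_{\mu_0})$ (which is a quasicircle), and its repelling periodic points are dense in $J(f_{\mu_0})$.

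First I would apply the implicit function theorem, for $|\nu|$ small, to produce an attracting fixed point $\mathbf{p}_\nu$ of $F_{\mu_0, \nu}$ with multipliers $\mu_0$ and $\nu$, varying holomorphically with $\nu$ and limiting as $\nu \to 0$ to the attracting fixed point of the degenerate map $(x,y) \mapsto (f_{\mu_0}(x), x)$. Its basin of attraction $\cA_\nu$ is open, forward-invariant, and disjoint from $J^+(F_{\mu_0, \nu})$.

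The crux of the argument is to establish uniform hyperbolicity of $F_{\mu_0, \nu}$ on $J(F_{\mu_0, \nu})$. The derivative is
\[
DF_{\mu_0, \nu}(x,y) = \begin{pmatrix} 2x & -\mu_0\nu \\ 1 & 0 \end{pmatrix},
\]
so the vertical direction is squeezed by the factor $|\mu_0\nu| \ll 1$, while outside a small neighborhood of the critical value and the attracting basin, $|2x|$ is uniformly bounded below. Exploiting the singular-perturbation structure---as $\nu \to 0$, $F_{\mu_0, \nu}$ collapses to the degenerate map whose $x$-dynamics is uniformly expanding on $J(f_{\mu_0})$---standard cone-field arguments yield $DF_{\mu_0,\nu}$-invariant stable and unstable cone fields with uniform contraction/expansion bounds on a neighborhood of $J(F_{\mu_0, \nu})$. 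This identifies $J(F_{\mu_0, \nu})$ as a hyperbolic basic set fibering over $J(f_{\mu_0})$ under the projection $(x,y) \mapsto x$.

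With hyperbolicity in hand, classical structural stability of hyperbolic basic sets (via the Anosov shadowing lemma) produces, for any $\nu_1, \nu_2$ sufficiently close in $\bbD_{\bar\epsilon} \setminus \{0\}$, a topological conjugacy $h_{\nu_1, \nu_2}: J(F_{\mu_0, \nu_1}) \to J(F_{\mu_0, \nu_2})$ varying continuously in $(\nu_1, \nu_2)$, which is exactly structural $J$-stability. The main obstacle is the hyperbolicity step: showing that every point of $K^-(F_{\mu_0, \nu})$, and hence of $J(F_{\mu_0, \nu})$, lies in the hyperbolic region, and that backward orbits avoid the critical neighborhood in a controlled way. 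This global control is delicate and is essentially the content of the Fornaess-Sibony and Hubbard-Oberste-Vorth reduction of highly dissipative attracting H\'enon dynamics to the underlying attracting quadratic dynamics.
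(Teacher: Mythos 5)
This statement is quoted from \cite{FoSi,HuOV}; the paper does not supply its own proof, so there is no in-paper argument to compare against. Your outline does correctly capture the strategy of those references: exploit the singular-perturbation structure and the uniform hyperbolicity of $f_{\mu_0}$ on its quasicircle Julia set to build invariant stable/unstable cone fields for $F_{\mu_0,\nu}$ when $|\nu|$ is small, concluding that $J(F_{\mu_0,\nu})$ is a uniformly hyperbolic set; then invoke structural stability of hyperbolic sets (shadowing) to get a conjugacy depending continuously on $\nu$, and use connectedness of $\bbD_{\bar\epsilon}\setminus\{0\}$ to propagate it over the whole family. One imprecision worth flagging: the projection $(x,y)\mapsto x$ is \emph{not} a semiconjugacy from $F_{\mu_0,\nu}|_J$ to $f_{\mu_0}|_{J(f_{\mu_0})}$, since $\pi\circ F(x,y)=x^2+c-a y\neq f(\pi(x,y))$ once $a\neq 0$; the correct topological model, due to Hubbard--Oberste-Vorth, identifies $J(F_{\mu_0,\nu})$ with the inverse limit $\varprojlim(J(f_{\mu_0}),f_{\mu_0})$ via backward orbits, rather than fibering directly over $J(f_{\mu_0})$. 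This does not affect the hyperbolicity or structural-stability steps, which are the substantive part of the argument and which you have set up correctly.
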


\begin{thm}[\cite{RaTa}]
Let $\mu_0 = e^{i\pi\frac{p}{q}}$ for some $p/q \in \bbQ / \bbZ$. Then there exists $\bar\epsilon >0$ such that the holomorphic family $(F_{\mu_0, \nu})_{\nu \in \bbD_{\bar\epsilon} \setminus \{0\}}$ of semi-parabolic H\'enon maps is structurally $J$-stable.
\end{thm}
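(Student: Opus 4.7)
The plan is to build an equivariant holomorphic motion of $J(F_{\mu_0,\nu})$ by first treating the semi-parabolic fixed point locally, then extending to the parabolic basin, and finally to the hyperbolic part of $J^*$ using the persistence of saddle periodic points.

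First, I would set up the local theory near the semi-parabolic fixed point $\bfq(\nu)$ of $F_{\mu_0,\nu}$. Replacing $F_{\mu_0,\nu}$ by its $q$-th iterate, one may assume the multipliers at $\bfq(\nu)$ are $1$ and $\nu^q$ with $|\nu^q|\ll 1$. Since $\mu_0$ is frozen at a root of unity, there is no parabolic implosion in the family: the degenerate multiplier does not move. Ueda's two-dimensional semi-parabolic theory, together with Hakim-style work on parabolic attracting petals, gives a strong stable manifold $W^{ss}(\bfq(\nu))$ of dimension $1$ and a parabolic basin $\Omega(\nu)$ attached to $\bfq(\nu)$ along a finite collection of attracting petals, all depending holomorphically on $\nu$. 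In each petal one constructs a Fatou coordinate $\Phi_\nu$ conjugating the $q$-th iterate to the translation $z\mapsto z+1$ in the central direction, and one trivializes the strong stable foliation so that the dynamics on $\Omega(\nu)$ is holomorphically conjugate, via a map depending holomorphically on $\nu$, to a fixed model (independent of $\nu$) on $\Omega(0)$. Equivariance on $\Omega(\nu)$ follows tautologically from the model description. In this way I obtain an equivariant holomorphic motion $\psi_\nu$ of $\overline{\Omega(\nu_0)}$, including the point $\bfq(\nu_0)$ itself, over a small disk $\bbD_{\bar\epsilon}\setminus\{0\}$.

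Next, I would handle the remaining part of $J^*(F_{\mu_0,\nu})$. By Bedford-Smillie and standard Pesin theory in the dissipative regime, the closure of saddle periodic points is contained in $J$, and outside any neighborhood of the semi-parabolic orbit the dynamics on $J^*$ is uniformly hyperbolic for small $|\nu|$ (the parabolic direction is the only obstruction to hyperbolicity, and it is isolated). Therefore every saddle $\bfs(\nu_0)\in\cS(F_{\mu_0,\nu_0})$ persists and moves holomorphically in $\nu$, as do its stable and unstable manifolds; by \thmref{weak stability}(iv), this yields an equivariant unbranched holomorphic motion of $\overline{\cS(F_{\mu_0,\nu})}=J^*(F_{\mu_0,\nu})$ away from the semi-parabolic basin. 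The hyperbolicity also implies that the $\lambda$-lemma-type extension theorems of Bedford-Smillie and Dujardin-Lyubich can be applied on this hyperbolic piece to extend the motion to its closure without branching.

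The third step is the gluing. One has to match the local semi-parabolic motion $\psi_\nu$ on $\overline{\Omega(\nu_0)}$ with the hyperbolic motion on the complement within $J^+$. The two motions agree on their common overlap---the forward orbits landing in attracting petals---because both are produced by pulling back a single fundamental motion along the iterates of $F_{\mu_0,\nu}$, which is the unique equivariant extension. Combining everything gives a continuous, equivariant motion $\Psi_\nu:J(F_{\mu_0,\nu_0})\to J(F_{\mu_0,\nu})$, and its inverse provides the topological conjugacy required for structural $J$-stability.

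The main obstacle is the gluing step, in particular verifying that the Fatou-coordinate trivialization of $\Omega(\nu)$ extends continuously up to $\partial\Omega(\nu)\subset J^+(F_{\mu_0,\nu})$ and meshes with the hyperbolic motion there. This requires careful control of the boundary behavior of the semi-parabolic basin as $\nu\to 0$, which is the heart of the Radu-Tanase analysis; here one exploits the fact that the petals have a definite size uniform in $\nu$ and that the strong stable foliation extends continuously across $J^+$ by dissipativity.
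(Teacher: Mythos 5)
This theorem is not proved in the paper at all: it is quoted from Radu--Tanase \cite{RaTa} as background, so there is no ``paper's own proof'' to compare your sketch against. Your outline is nonetheless worth assessing on its own terms, and it differs substantially from the actual Radu--Tanase approach: rather than constructing an equivariant holomorphic motion of $J$, they build an explicit topological model for $J^+$ and $J$ (a quotient of a dyadic solenoid) via a crossed-mapping / Hubbard--Oberste-Vorth--type formalism combined with semi-parabolic normal forms, and then show that every map in the family realizes the same model; structural $J$-stability is a consequence of everyone being conjugate to the same model, not of moving a fixed Julia set holomorphically.

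There is also a genuine gap in your sketch as written. The assertion that ``outside any neighborhood of the semi-parabolic orbit the dynamics on $J^*$ is uniformly hyperbolic'' does not follow from excision alone: orbits that shadow the (semi-)parabolic cycle for a long time degrade hyperbolicity estimates in an unbounded way, and controlling this requires an inducing or first-return construction. That control is precisely the technical heart of \cite{RaTa} and of the earlier semi-parabolic one-dimensional theory; it is not a soft consequence of Pesin theory or of the parabolic direction ``being isolated.'' Relatedly, your appeal to ``$\lambda$-lemma-type extension theorems'' to pass from the motion of saddles to a motion of the closure glosses over the fact (stated explicitly in this paper) that the $\lambda$-lemma fails in $\bbC^2$: extending an unbranched motion to the closure without branching needs a specific hyperbolicity-on-$J^*$ input, which is exactly what the uncontrolled parabolic shadowing puts in doubt. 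Finally, the gluing step you flag as the main obstacle really is the main obstacle, and as sketched it is circular: you assume the Fatou-coordinate trivialization extends continuously to $\partial\Omega(\nu)\subset J^+$, but establishing exactly that boundary regularity is the content of the structure theorem you are trying to prove.
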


\begin{rem}
In fact, for both attracting and semi-parabolic H\'enon maps, an explicit topological model of $J$ (as well as $J^+$) can be provided. For more details, see \cite{FoSi}, \cite{HuOV} and \cite{RaTa}.
\end{rem}

\subsection{The golden-mean semi-Siegel H\'enon family}

Let $\theta_* := (\sqrt{5}-1)/2$ be the inverse golden mean rotation number. Let $F_a = F_{\mu_*, \nu}$ be the H\'enon map such that
$$
\Jac F_a \equiv a \in \bbD\setminus \{0\},
$$
and $F_a$ has a fixed point $\bfx_a \in \bbC^2$ of multipliers $\mu_* := e^{2\pi \theta_* i}$ and $\nu = a/ \mu_*$. Since $\theta_*$ is of bounded type, $F_a$ is locally linearizable at $\bfx_a$, and thus, $\bfx_a$ is semi-Siegel. The holomorphic family $(F_a)_{\nu \in \bbD \setminus \{0\}}$ is referred to as the {\it golden-mean semi-Siegel H\'enon family}.

By Siegel's Theorem \cite{Sieg}, there exists a local biholomorphic change of coordinates $\psi_a : (U, (0,0)) \to (V, \mathbf{x}_a)$ such that 
$$
\psi_a^{-1} \circ F_a \circ \psi_a(x,y) = (\mu_* x, \nu y).
$$
Then $\psi_a$ can be biholomorphically extended to a map $\psi_a : (\mathbb{D} \times \bbC, (0,0)) \rightarrow (\mathcal{C}_a, \mathbf{x}_a)$ so that its image $\mathcal{C}_a := \psi_a(\mathbb{D} \times \bbC)$ is maximal (see \cite{MoNiTaUe}). We call $\mathcal{C}_a$ the {\it Siegel cylinder} of $F_a$. 
Clearly, the orbit of every point in $\mathcal{C}_a$ converges to the biholomorphically embedded disk $\mathcal{D}_a:=\psi_a(\mathbb{D} \times\{0\})$. We call $\mathcal{D}_a$ the {\it Siegel disk} of $F_a$. See \figref{fig:siegelcylinder}.

\begin{figure}[h]
\centering
\includegraphics[scale=0.3]{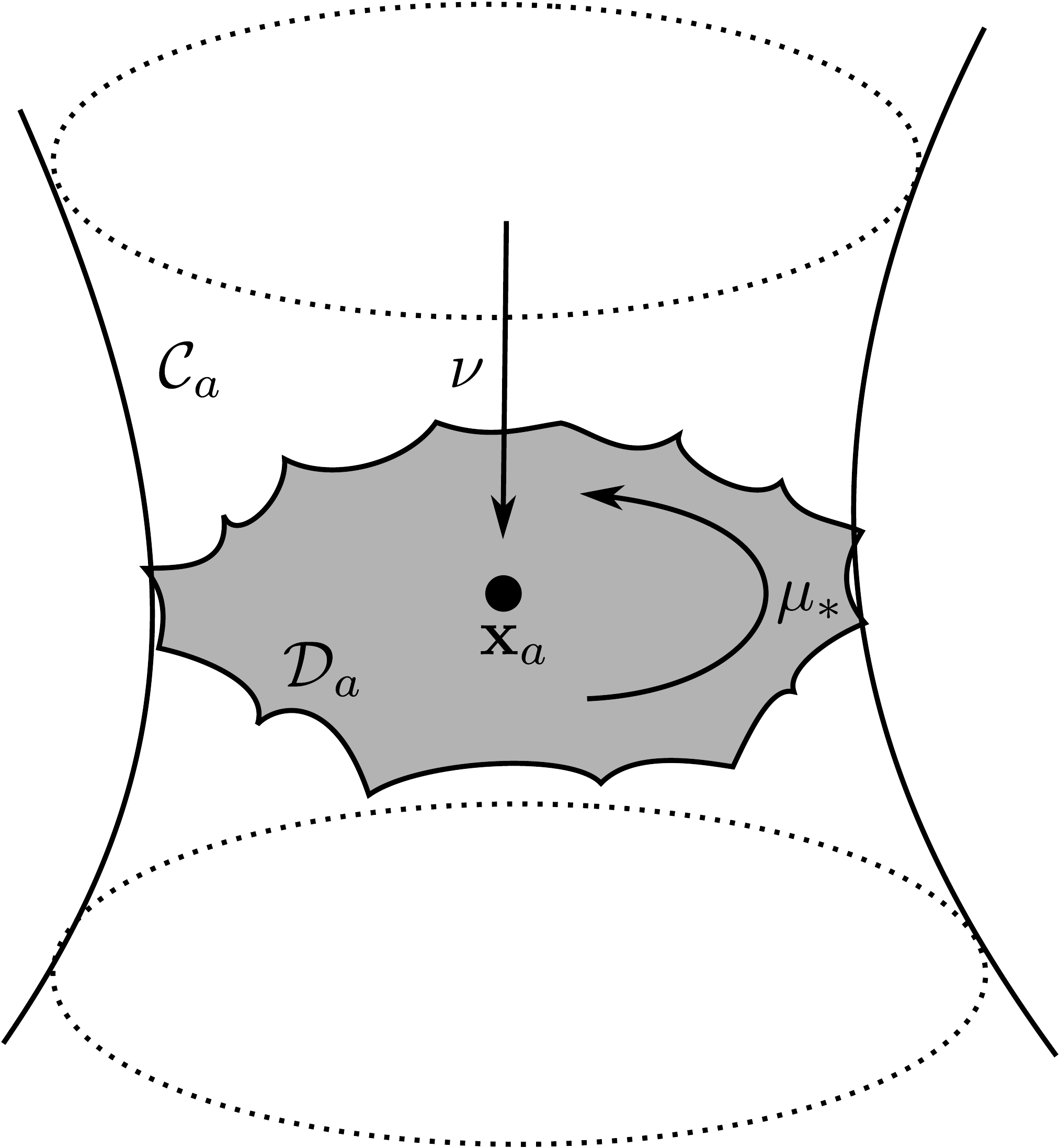}
\caption{The Siegel cylinder $\mathcal{C}_a$ and the Siegel disk $\mathcal{D}_a$ of $F_a$.}
\label{fig:siegelcylinder}
\end{figure}

\begin{rem}
The Siegel cylinder $\cC_a$ is a connected component of the interior of $K^+(F_a)$. Moreover, we have $\partial \cC_a = J^+(F_a)$ (see \cite{BeSm}). The Siegel disk $\cD_a$ is contained in $K$. 
\end{rem}

The first author and D. Gaidashev  developed a renormalization theory of Siegel dynamics that extended to higher dimensions (see \cite{GaYam}). By applying this new tool, jointly with Radu, they proved the following result:

\begin{thm}[\cite{GaRaYam}]\label{continuous circle}
There exists $\bar \epsilon>0$ such that if $b \in \bbD_{\bar\epsilon} \setminus \{0\}$, then the boundary of the Siegel disk $\cD_a$ for $F_a$ is a homeomorphic image of the circle. In fact, the linearizing map
$$
\psi_a|_{y=0} : \bbD \times \{0\} \rightarrow \cD_a
$$
extends continuously and injectively to the boundary.
\end{thm}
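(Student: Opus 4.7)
The plan is to implement the renormalization strategy alluded to in the paragraph preceding the statement, adapting the golden-mean Siegel renormalization of \cite{GaYam} from dimension one to the dissipative H\'enon setting. In one dimension, the a priori bounds produced by this renormalization scheme already imply that $\partial \cD_0$ is a quasicircle on which the linearizer of $f_{\mu_*}$ extends continuously to the boundary. My goal is to show that, for $|a|$ sufficiently small, the H\'enon renormalizations of $F_a$ converge geometrically to the one-dimensional renormalization fixed point $f_*$, and to transfer the a priori bounds to the two-dimensional picture.

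The first step is to define a renormalization operator $\cR$ on a Banach space of pairs $(F,G)$ of H\'enon-like maps with golden-mean pre-renormalizable combinatorics around a semi-Siegel fixed point. Writing $p_n/q_n$ for the convergents of $\theta_*$, the operator $\cR$ should send a suitably normalized pair representing the $(q_n, q_{n+1})$-return dynamics to the pair representing the $(q_{n+1}, q_{n+2})$-return dynamics. At $a=0$ this restricts to the one-dimensional cylinder renormalization studied in \cite{GaYam}, which possesses a hyperbolic fixed point $f_*$ whose one-dimensional unstable direction corresponds to the rotation number.

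The heart of the argument, and the main obstacle, is a normal hyperbolicity statement for $\cR$ at $f_*$ in the thickened H\'enon space: one must show that an appropriate measure of the ``H\'enon thickness'' (the two-dimensional non-triviality of a H\'enon-like pair) contracts super-exponentially under iteration of $\cR$. This is what confines the theorem to the sufficiently dissipative regime, and it demands careful estimates on how composition and affine rescaling act on H\'enon-like maps; the Lyubich-Martens normal hyperbolicity arguments in the period-doubling setting provide the natural template. Combined with hyperbolicity along the one-dimensional slice, this yields geometric convergence $\cR^n(F_a) \to f_*$ for $|a|$ below an absolute threshold $\bar\epsilon$, together with uniform geometric control at all intermediate scales.

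With convergence established, the topological conclusion is a routine consequence of the a priori bounds. The pre-renormalization tower for $F_a$ provides a nested sequence of dynamically defined neighborhoods of $\partial \cD_a$ whose conformal moduli are bounded below independently of the scale. Equivalently, the images $\psi_a(\{|x|=r\}\times\{0\})$ for $r \nearrow 1$ form an equicontinuous Cauchy family of Jordan curves in the Hausdorff topology, whose limit is $\partial \cD_a$; hence $\partial \cD_a$ is a topological circle. The corresponding parametrizations converge uniformly to a continuous map from the unit circle onto $\partial \cD_a$, supplying the boundary extension of $\psi_a|_{y=0}$, while the lower bounds on moduli preclude any identifications, which gives injectivity.
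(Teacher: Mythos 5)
Your outline correctly identifies the renormalization machinery: a pairs renormalization operator for H\'enon-like maps whose two-dimensional ``thickness'' contracts super-exponentially, yielding geometric convergence of $\bfR^n(F_a)$ to the one-dimensional fixed point $\zeta_*=(\eta_*,\xi_*)$ for $|a|$ small. This is exactly the framework of \cite{GaYam} and \cite{GaRaYam}, summarized in \secref{sec:prelim} (\lemref{degeneration}, \thmref{convergence}), so the first three paragraphs of your proposal are on target.

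The gap is in the last paragraph, which is where the theorem actually has to be proved. You assert that the pre-renormalization tower supplies neighborhoods of $\partial\cD_a$ with definite conformal moduli and then invoke a Carath\'eodory-type argument. But $\cD_a$ is an embedded analytic disk in $\bbC^2$, not a plane domain, so ``conformal moduli of annular neighborhoods of $\partial\cD_a$'' is not a well-posed planar quantity, and the transfer of the one-dimensional a priori bounds is precisely the crux, not a routine consequence. Moreover, the bounds you obtain cannot be as strong as the one-dimensional ones you cite: in 1D they give a quasicircle, whereas the theorem only claims a topological circle, and necessarily so --- by the result of \cite{Yan2} quoted two theorems below, $\partial\cD_a$ has unbounded geometry for a dense $G_\delta$ of parameters, so a quasicircle-type modulus mechanism is not what drives the 2D statement. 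What the two-dimensional renormalization actually supplies is the geometric contraction of the microscope maps, $\|D\Phi_n^{n+k}\|\asymp|\lambda_*|^k$ (\propref{2dcap}), and hence geometric decay of the diameters of the dynamically defined pieces tiling $\partial\cD_a$. The boundary parametrization in \cite{GaRaYam} is constructed from this hierarchical tiling by shadowing the combinatorics of $\zeta_*$; continuity comes from the diameter decay and injectivity from the combinatorial disjointness of pieces at each scale. Your final paragraph would need to be replaced by an argument of that kind; as written, the Cauchy/equicontinuity claim for the curves $\psi_a(\{|x|=r\}\times\{0\})$ is asserted rather than derived, and the proposed modulus mechanism for deriving it does not carry over from dimension one.
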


\begin{rem}
The above theorem implies that the boundary of the Siegel disk $\partial \cD_a$ is the support of an ergodic invariant measure, and hence, by \cite{Du}, it must be contained in $J^*(F_a)$\footnote{This was pointed out to us by R. Dujardin.}.
\end{rem}

In \cite{Yan1}, the second author reformulated the renormalization theory of \cite{GaYam} to obtain precise quantitative estimates. A summary of this work is given in \secref{sec:prelim}. These estimates then led to the following results on the geometry of the Siegel boundary for H\'enon maps.

\begin{thm}[\cite{Yan1}]
There exists $\bar \epsilon>0$ such that if $a_1, a_2 \in \bbD_{\bar \epsilon}\setminus \{0\}$ and $|a_1| \neq |a_2|$, then $F_{a_1}$ and $F_{a_2}$ cannot be $C^1$-conjugate on the boundary of their respective Siegel disks.
\end{thm}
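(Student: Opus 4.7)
My plan is to use the quantitative renormalization theory from \cite{Yan1} to extract a geometric invariant of the boundary $\partial \cD_a$ that depends nontrivially on $|a|$ and must be preserved by any $C^1$ conjugacy on the boundary. The contrapositive then yields the theorem.

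First I would recall the renormalization fixed-point picture for golden-mean semi-Siegel H\'enon maps. The renormalization operator $\cR$ has a one-dimensional fixed point $F_*(x,y) = (f_*(x), x)$, where $f_*$ is the universal 1D golden-mean Siegel quadratic. Its differential $D\cR(F_*)$ carries a universal 1D unstable direction, together with a strongly contracting ``H\'enon direction'' along which the $n$-th renormalization of $F_a$ departs from $F_*$ by an amount controlled by $|a|^{q_n}$, where $q_n$ is the $n$-th Fibonacci number (the denominator of the $n$-th convergent of $\theta_*$). Translating these quantitative estimates into the ambient geometry of $\partial \cD_a$ near the distinguished ``tip'' $\mathbf{x}^{\mathrm{crit}}_a$ of the critical orbit, I would identify an asymptotic scaling ratio of nested renormalization windows along the tangent line to $\partial \cD_a$ at $\mathbf{x}^{\mathrm{crit}}_a$: its leading term is the universal 1D scaling $\lambda$, while the subleading correction is an explicit monotone function of $|a|$.

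Next I would verify that any $C^1$ conjugacy $h \colon \partial \cD_{a_1} \to \partial \cD_{a_2}$ preserves this invariant. Since $h$ is in particular a topological conjugacy between rotation-number-$\theta_*$ circle maps, it must match the canonical dynamical markers of $F_{a_1}$ and $F_{a_2}$ -- in particular matching the Fibonacci closest-return combinatorics and sending $\mathbf{x}^{\mathrm{crit}}_{a_1}$ to $\mathbf{x}^{\mathrm{crit}}_{a_2}$. The $C^1$ hypothesis forces $Dh$ at $\mathbf{x}^{\mathrm{crit}}_{a_1}$ to be a well-defined linear isomorphism of tangent lines; such an isomorphism rescales first-order quantities by a constant, but it preserves limiting ratios of nested dynamical scales. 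Hence the $|a|$-dependent subleading scaling computed at $a_1$ and $a_2$ must agree, and monotonicity yields $|a_1| = |a_2|$.

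The principal obstacle will be the first step: pinning down the correct $|a|$-dependent invariant from the renormalization estimates. The delicate point is to extract from the H\'enon-direction perturbation of size $|a|^{q_n}$ a purely \emph{tangential} geometric signature that survives $C^1$ rigidity rather than being absorbed into the linear part of $h$ or into the transverse direction (which a $C^1$ conjugacy on the boundary does not see). This requires sharp quantitative control both of the renormalization convergence along the H\'enon strand and of the resulting fine geometry of $\partial \cD_a$ at the critical tip, which is precisely the content of \cite{Yan1}.
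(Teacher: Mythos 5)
Your proposal identifies the right toolkit---the quantitative renormalization asymptotics of \cite{Yan1} together with $C^1$ rigidity at the critical tip---but it leaves the load-bearing step open, and the invariant you gesture at (an ``asymptotic scaling ratio\ldots along the tangent line'' whose ``subleading correction is an explicit monotone function of $|a|$'') is likely the wrong one. The tangential scaling factors near the fold are universal and effectively $|a|$-blind: by Theorem~\ref{cap derivative}, the diagonal entries of $D_n^{n+k}$ converge to $\lambda_*^{2k}$ and $\lambda_*^{k}$ with errors $O(\rho^n)$ that are merely exponential, whereas the H\'enon corrections decay \emph{super}-exponentially, like $a^{q_{2n}}$ (note: the even-indexed Fibonacci numbers $q_{2n}$, not $q_n$). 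So any single ``subleading tangential ratio'' is swamped by the universal exponential tail and cannot resolve $|a|$.

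The $|a|$-dependent signature is transverse, not tangential: it is the tilt $t_n a^{q_{2(n-1)}}$ in the off-diagonal entry of the microscope derivative $D_n$ (Theorem~\ref{cap derivative}), equivalently the universal shear terms $a^{q_{2n}}\beta(x)\,y$ and $a^{q_{2n}}\alpha(x)\,y$ of Theorems~\ref{universalityb}--\ref{universalitya}, which come with nonzero universal leading coefficients. Because these live in the direction transverse to $\partial\cD_a$, a $C^1$ conjugacy of ambient neighborhoods of the boundaries imposes a \emph{fixed} linear map $D_{\mathrm{tip}}h$ of the tangent space at the tip, relating the two tilt sequences at depth $n$ by bounded factors; letting $n\to\infty$ and using that $q_{2n}\to\infty$ forces $(|a_1|/|a_2|)^{q_{2n}}$ to remain bounded, hence $|a_1|=|a_2|$. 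This is exactly what your plan does not supply: the mechanism is not that a single monotone scalar invariant must agree between $a_1$ and $a_2$, but that the super-exponential decay \emph{rates} of a sequence of transverse invariants must match, which is what pins down $|a|$. The ``principal obstacle'' you flag (extracting a purely tangential signature) is, as stated, insurmountable; the fix is to measure the transverse tilt, which the universality theorems of \cite{Yan1} render quantitative.
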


\begin{thm}[\cite{Yan2}]
There exists $\bar \epsilon>0$ such that the set of parameter values $a$ for which the boundary of the Siegel disk $\cD_a$ for $F_a$ has unbounded geometry contains a dense $G_\delta$ subset in the disc $\bbD_{\bar\epsilon} \setminus \{0\}$.
\end{thm}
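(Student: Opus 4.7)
The plan is a Baire category argument in the parameter disk $\bbD_{\bar\epsilon}\setminus\{0\}$. Using the quantitative renormalization framework of \cite{Yan1} summarized in \secref{sec:prelim}, one first attaches to each parameter $a$ and each integer $n\in\bbN$ a real number $G_n(a)\ge 1$ recording the geometry of $\partial\cD_a$ at the $n$th renormalization scale; concretely, $G_n(a)$ can be taken as the ratio of diameters of the two fundamental domains adjacent to $\bfx_a$ read off from $\cR^n F_a$ in the linearizing coordinate. By definition $\partial\cD_a$ has bounded geometry iff $\sup_n G_n(a)<\infty$.

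Because $\cR^n F_a$ depends holomorphically on $a$, each $G_n$ is continuous on $\bbD_{\bar\epsilon}\setminus\{0\}$, hence for every integer $K\ge 1$ the set
$$
B_K:=\{\,a\in\bbD_{\bar\epsilon}\setminus\{0\}\,:\;G_n(a)\le K\text{ for every }n\in\bbN\,\}
$$
is closed. The set of bounded-geometry parameters is $\bigcup_{K}B_K$, so once each $B_K$ is shown to have empty interior, Baire's theorem gives that its complement---the unbounded-geometry parameters---is a dense $G_\delta$, which is what we want.

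The heart of the argument is then to prove $B_K$ has empty interior. Suppose for contradiction that some open disk $U\subset\bbD_{\bar\epsilon}\setminus\{0\}$ is contained in $B_K$. The uniform bound $G_n\le K$ across $U$, combined with the precompactness and hyperbolic (effectively one-dimensional) structure of the golden-mean Siegel renormalization operator on highly dissipative Hénon-like germs established in \cite{GaYam} and quantified in \cite{Yan1}, should force the renormalization orbits $\{\cR^n F_{a_1}\}$ and $\{\cR^n F_{a_2}\}$ to shadow each other exponentially fast along the Siegel boundary for every $a_1,a_2\in U$. Standard bounded-distortion estimates in the linearizing coordinates near $\bfx_{a_i}$ would then assemble this shadowing into a $C^{1+\alpha}$ conjugacy between $F_{a_1}$ and $F_{a_2}$ on $\partial\cD_{a_1}$ and $\partial\cD_{a_2}$. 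But $U$ is open in $\bbC$ and hence contains parameters with distinct moduli, which directly contradicts the theorem of \cite{Yan1} cited immediately above.

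The main obstacle is this final rigidity step---upgrading uniform bounded geometry on an open parameter set to a $C^{1+\alpha}$ conjugacy of Siegel boundaries. In dimension one the analogous upgrade is classical, but in the Hénon setting it requires using the strong dissipation $|a|<\bar\epsilon$ to effectively collapse the vertical fibers of the renormalized maps and reduce distortion control to a quasi one-dimensional computation; the sharp estimates in \cite{Yan1} are designed precisely for this. Once the rigidity step is in hand, the remaining ingredients---continuity of $G_n$, Baire, and extracting $a_1,a_2\in U$ with $|a_1|\neq|a_2|$---are routine.
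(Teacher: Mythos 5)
This theorem is not proved in the present paper; it is cited from \cite{Yan2}. So the comparison below is to the structure of the actual argument (which the cited paper's universality machinery, summarized in \secref{sec:prelim}, supports) rather than to a proof contained here.

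Your Baire-category skeleton is fine: encode geometry by a continuous scale-dependent quantity $G_n(a)$, observe that the bounded-geometry locus is $\bigcup_K B_K$ with each $B_K$ closed, and reduce to showing that each $B_K$ has empty interior. The problem is the step you yourself flag as ``the heart of the argument'': that $G_n \le K$ uniformly on an open $U$ would force $C^{1+\alpha}$-conjugacy between $F_{a_1}$ and $F_{a_2}$ for $a_1,a_2 \in U$, contradicting the non-rigidity theorem of \cite{Yan1}. This implication does not hold in this setting, and the reason is visible in the material quoted in \secref{sec:prelim}. By \thmref{convergence} the renormalization orbits $\cR^n F_a$ converge exponentially fast to the \emph{same} one-dimensional fixed point $\zeta_*$ for every small $a\neq 0$, bounded geometry or not. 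So the ``exponential shadowing along the Siegel boundary'' you invoke is automatic and carries no information about conjugacy. What obstructs $C^1$-conjugacy is precisely the residual two-dimensional structure of size $a^{q_{2n}}$ recorded in \thmref{universalityb}, \thmref{universalitya} and \thmref{cap derivative}; these corrections are not killed by renormalization convergence, and they simultaneously are what makes the boundary geometry oscillate. Thus bounded geometry does not yield rigidity, and a rigidity-contradiction argument cannot be made to close.

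The way the empty-interior step should actually be carried out is by a direct estimate of the geometry at scale $n$ using the explicit $a^{q_{2n}}$-dependence in the universality formulas: since $q_{2n}$ grows exponentially, the argument $\arg\bigl(a^{q_{2n}}\bigr)$ and modulus $|a|^{q_{2n}}$ equidistribute/degenerate rapidly as $n$ varies, and one can show that for every open $U$ and every $K$ there is a parameter in $U$ and a scale $n$ with $G_n > K$. That is the concrete content that has to replace your rigidity step. So: keep the Baire framing, drop the conjugacy contradiction, and prove empty interior by direct scale-by-scale estimation of the geometric quantity using the $a^{q_{2n}}$ corrections.
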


The results mentioned thus far are restricted to the boundary of the Siegel disk $\cD_a$. In this paper, we extend our scope to the global behavior of $F_a$ by studying the topological properties of the boundary of the entire Siegel cylinder $\cC_a$. Our main result is the following.

\begin{mainthm}
There exists $\bar\epsilon>0$ such that the golden-mean semi-Siegel H\'enon family $F_a$ is not weakly $J^*$-stable at every parameter $a \in \bbD_{\bar\epsilon} \setminus \{0\}$.
\end{mainthm}

Combining our main theorem with the work of R. Dujardin and M. Lyubich, we are able to conclude the following two surprising facts about the semi-Siegel H\'enon family. First, by Corollary 4.5 in \cite{DuLy}:

\begin{cor}
There exists a dense $G_\delta$ subset $X \subset \bbD_{\bar\epsilon} \setminus \{0\}$ such that $F_a$ is a {\it Newhouse automorphism} (i.e. has infinitely many sinks) for $a \in X$.
\end{cor}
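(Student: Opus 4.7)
The corollary follows from the Main Theorem by a direct application of Corollary~4.5 of \cite{DuLy}, as indicated in the introductory sentence preceding it. The plan has three pieces: unpack the content of \cite[Cor.~4.5]{DuLy}, verify its hypotheses for our family, and substitute the conclusion of the Main Theorem.

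What \cite[Cor.~4.5]{DuLy} gives is the following. In any holomorphic family of moderately dissipative complex H\'enon automorphisms over a connected base $\Lambda$, the set of parameters at which the map is a Newhouse automorphism is a dense $G_\delta$ subset of the bifurcation locus, i.e.\ the closed set of parameters at which the family fails to be weakly $J^*$-stable. The hypotheses are easy to verify for our family: $a \mapsto F_a$ is holomorphic in $a$ (via the explicit formulas for $c$ and $a$ in terms of $\mu_*$ and $\nu$), the Jacobian is the constant $a$ with $|a| < \bar\epsilon \ll 1$, and the base $\bbD_{\bar\epsilon} \setminus \{0\}$ is a connected one-dimensional complex manifold. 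By the Main Theorem, every $a \in \bbD_{\bar\epsilon}\setminus\{0\}$ is a bifurcation parameter, so the bifurcation locus coincides with the full parameter domain. Plugging this into \cite[Cor.~4.5]{DuLy} produces a dense $G_\delta$ subset $X \subset \bbD_{\bar\epsilon}\setminus\{0\}$ of Newhouse parameters, as required.

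Thus the corollary is one citation away from the Main Theorem, and all of the genuine work lies in the Main Theorem itself, which this corollary uses as a black box. If forced to sketch the Main Theorem, I would aim to contradict condition (v) of \thmref{weak stability} at every base parameter $a_0$: use the quantitative renormalization scheme of \cite{Yan1} to produce a sequence of saddle periodic points $\bfs_n(a)$ accumulating on the Siegel disk boundary $\partial \cD_a$, track how their stable and unstable manifolds are organized through the successive renormalization microscopes, and exhibit homoclinic or heteroclinic tangencies that arise at parameters arbitrarily close to $a_0$ but fail to persist in a full neighborhood. The main technical difficulty, and the step I would expect to be the heart of the argument, is arranging such non-persistent tangencies \emph{uniformly in the base parameter}, which should follow from the self-similar geometry of the dynamics near $\partial \cD_{a_0}$ across all renormalization scales.
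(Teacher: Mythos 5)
Your proposal is correct and matches the paper exactly: the paper states this corollary as an immediate consequence of the Main Theorem via Corollary~4.5 of \cite{DuLy}, with no further proof given. Your unpacking of the cited result (Newhouse parameters are a dense $G_\delta$ in the bifurcation locus, which by the Main Theorem is all of $\bbD_{\bar\epsilon}\setminus\{0\}$) is precisely the intended deduction.
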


Second, by Theorem 5.7 in \cite{DuLy}:

\begin{cor}
There exists a dense subset $Y \subset \bbD_{\bar\epsilon} \setminus \{0\}$ such that $J(F_a)$ is disconnected for $a \in Y$.
\end{cor}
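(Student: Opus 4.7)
The plan is to deduce the corollary as an immediate consequence of the Main Theorem combined with Theorem 5.7 of \cite{DuLy}. The Main Theorem identifies every parameter in $\bbD_{\bar\epsilon}\setminus\{0\}$ as a bifurcation parameter of the golden-mean semi-Siegel H\'enon family $(F_a)$. Theorem 5.7 of \cite{DuLy}, applied to any holomorphic family of dissipative H\'enon automorphisms, says that on the bifurcation locus of that family the parameters at which $J$ is disconnected form a dense subset. Combining these two statements immediately yields a dense subset $Y \subset \bbD_{\bar\epsilon}\setminus\{0\}$ on which $J(F_a)$ is disconnected.

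Conceptually, the passage from bifurcation to disconnection in \cite{DuLy} operates through the characterization recalled in \thmref{weak stability}(v): a bifurcation forces the presence of homoclinic or heteroclinic tangencies between stable and unstable manifolds of saddle points that fail to be preserved across the family. Unfolding such a non-persistent tangency within a holomorphic parameter family produces a dense subset of parameters at which the stable and unstable currents develop a non-trivial pinch; in the sufficiently dissipative regime this pinch translates into a genuine topological disconnection of $J$. Reproducing this chain of implications is the content of \cite[Theorem~5.7]{DuLy} and need not be carried out here.

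The only step on our side is to verify the hypotheses of \cite[Theorem~5.7]{DuLy}. This reduces to two points: that $(F_a)_{a \in \bbD_{\bar\epsilon}\setminus\{0\}}$ is a one-parameter holomorphic family of sufficiently dissipative H\'enon automorphisms, which is immediate from the definition of $F_a$ and the bound $|\Jac F_a| = |a| < \bar\epsilon$; and that every parameter of the family is a bifurcation parameter, which is exactly the content of the Main Theorem. The hard part has already been absorbed into those two inputs; granted them, I do not expect any further obstacle, and the corollary follows in a single line.
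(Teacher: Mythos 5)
Your argument is exactly the paper's: the corollary is stated as an immediate consequence of the Main Theorem together with Theorem~5.7 of \cite{DuLy}, and your verification of the hypotheses (a one-parameter holomorphic family of dissipative H\'enon maps in which every parameter is a bifurcation parameter) is precisely what is needed. The extra conceptual paragraph on tangency unfolding is harmless exposition and does not change the route.
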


The strategy of the proof of the Main Theorem is as follows. Using renormalization, we obtain a precise description of the geometry of stable and unstable manifolds of saddles near a dynamically defined point on the boundary of the Siegel disc called the ``fold''. It turns out that under a change in the Jacobian $a$, the stable manifolds, which are nearly vertical, remain almost stationary while  the unstable manifolds, which are ``bent'',  move analytically. Using this fact, we force a contradiction with \thmref{weak stability} v).

Our approach was greatly inspired by the work of Lyubich and Martens, who used renormalization to prove a global topological instability in the family of {\it real} Feigenbaum H\'enon maps (see \cite{LyMa}).

\section{Preliminaries}\label{sec:prelim}

In this section, we provide a brief summary of the renormalization theory of semi-Siegel H\'enon maps. See \cite{Yan1} for complete details.

Let $a \in \bbD_{\bar \epsilon} \setminus \{0\}$ for some $\bar\epsilon>0$ sufficiently small. Consider the H\'enon map
$$
F_a(x,y)=\begin{bmatrix}
x^2+c_a - ay\\
x
\end{bmatrix}
$$
that has a Siegel disc $\cD_a \subset \bbC^2$ with rotation number $\theta_* = (\sqrt{5}-1)/2$. Representing $\theta_*$ by an infinite continued fraction with positive terms, we have:
$$
\theta_* = \cfrac{1}{1+\cfrac{1}{1+\ldots{}}}.
$$
The {\it $n$th partial convergent of $\theta_*$} is the rational number $p_n/q_n$ obtained by terminating the continued fraction of $\theta_*$ after the $n$th term. The denominators $\{q_n\}_{n=0}^\infty$ form the Fibonacci sequence:
\begin{equation}\label{eq:fibonacci}
q_0 = 1
\hspace{5mm} , \hspace{5mm}
q_1 = 1
\hspace{5mm} \text{and} \hspace{5mm}
q_{n+1} = q_n + q_{n-1}
\hspace{5mm} \text{for} \hspace{5mm}
n \geq 1.
\end{equation}

\subsection{Definition of renormalization}

Let $\hat \Omega_0$ and $\hat \Gamma_0$ be suitably chosen topological bidisks in $\bbC^2$ such that $\hat \Omega_0 \cap \hat \Gamma_0 \ni (0,0)$ and $\hat \Omega_0 \cup \hat \Gamma_0 \supset \partial \cD_a$.  The {\it pair representation} of $F_a$ is given by
$$
\hat \Sigma_0 = (\hat A_0, \hat B_0) := (F_a|_{\hat \Omega_0}, F_a|_{\hat \Gamma_0}).
$$

Let
\begin{equation}\label{eq:initial normalization}
\Phi_0(x,y) := (\lambda_0 x, \lambda_0 y),
\end{equation}
where $\lambda_0 := c_a$. Observe that
$$
\Phi_0^{-1} \circ F_a \circ \Phi_0(0,0) =(1, 0).
$$
The {\it normalized pair representation} of $F_a$ is defined as
$$
\Sigma_0 = (A_0, B_0) := (\Phi_0^{-1} \circ \hat A_0 \circ \Phi_0, \Phi_0^{-1} \circ \hat B_0 \circ \Phi_0).
$$

The $n$th {\it renormalization} of $F_a$:
$$
\bfR^n(F_a) := \Sigma_n = (A_n, B_n),
$$
where
$$
A_n(x,y) = \begin{bmatrix}
a_n(x,y) \\
h_n(x,y)
\end{bmatrix}
\hspace{5mm} \text{and} \hspace{5mm}
B_n(x,y) = \begin{bmatrix}
b_n(x,y) \\
x
\end{bmatrix},
$$
is the pair of rescaled iterates of $\Sigma_0$ defined inductively as follows. Denote
$$
(a_n)_y(x) := a_n(x,y),
$$
and let
$$
H_{n+1}(x,y) :=
\begin{bmatrix}
(a_n)_y^{-1}(x) \\
y
\end{bmatrix}.
$$
Consider the non-linear change of coordinates defined as
$$
\Phi_{n+1}(x,y) := H_{n+1} \circ \Lambda_{n+1},
$$
where
\begin{equation}\label{eq:affine rescaling}
\Lambda_{n+1}(x,y) = (\lambda_{n+1} x + c_{n+1}, \lambda_{n+1} y)
\end{equation}
is an affine rescaling map to be specified later. The pair $\Sigma_{n+1} = (A_{n+1}, B_{n+1})$ is defined as
$$
A_{n+1} = \Phi_{n+1}^{-1} \circ B_n \circ A_n^2 \circ \Phi_{n+1}
\hspace{5mm} \text{and} \hspace{5mm}
B_{n+1} = \Phi_{n+1}^{-1} \circ B_n \circ A_n \circ \Phi_{n+1}.
$$

\begin{lem}\label{definition}
There exist topological discs $Z, W, V \subset \bbC$ containing $0$ such that the following holds. Denote
$$
V_n := \lambda_1^{-1} \cdot \ldots \cdot \lambda_n^{-1} V
\hspace{5mm} , \hspace{5mm}
\Omega_n := Z \times V_n
\hspace{5mm} \text{and} \hspace{5mm}
\Gamma_n := W \times V_n.
$$
Then for all $n \in \bbN$ sufficiently large, $A_n$ and $B_n$ are bounded analytic maps that are well-defined on $\Omega_n$ and $\Gamma_n$ respectively.
\end{lem}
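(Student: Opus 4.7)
The plan is to prove \lemref{definition} by induction on $n$, exploiting the quantitative renormalization theory developed in \cite{Yan1} which shows that the renormalization operator $\bfR$ has a fixed point $\Sigma_* = (A_*, B_*)$ inherited from the one-dimensional Siegel renormalization of Gaidashev and the first author, and that $\bfR^n(F_a) \to \Sigma_*$ at a rate controlled by the Jacobian (schematically, $\|\bfR^n(F_a) - \Sigma_*\| = O(|a|^{q_n})$). In particular, the horizontal components of $A_n, B_n$ converge in a Banach space of analytic maps on a fixed bidisc, independent of $n$, which is what makes uniform choices of $Z$ and $W$ possible.

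The first step is to fix the base discs. Take $Z_*, W_* \subset \bbC$ to be the natural horizontal domains of the one-dimensional pair $(A_*, B_*)$, and choose $Z \Subset Z_*$, $W \Subset W_*$ as slight horizontal shrinkings, together with a small disc $V \subset \bbC$ about the origin. For the inductive step, assume $A_n, B_n$ are bounded analytic on $\Omega_n, \Gamma_n$ and $(A_n, B_n)$ is $|a|^{q_n}$-close to $(A_*, B_*)$ on a mild enlargement. I then need to verify each composition in
\begin{align*}
A_{n+1} &= \Phi_{n+1}^{-1} \circ B_n \circ A_n^2 \circ \Phi_{n+1}, \\
B_{n+1} &= \Phi_{n+1}^{-1} \circ B_n \circ A_n \circ \Phi_{n+1}
\end{align*}
is well-defined on $\Omega_{n+1}$, $\Gamma_{n+1}$ respectively. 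This splits into four sub-tasks: (i) the non-linear change $H_{n+1}(x,y) = ((a_n)_y^{-1}(x), y)$ is defined, i.e.\ $(a_n)_y$ is univalent on a slight enlargement of $Z$ for each $y \in \lambda_{n+1} V_{n+1}$, with image containing $\lambda_{n+1} Z + c_{n+1}$; (ii) $\Phi_{n+1}$ sends $\Omega_{n+1}$ and $\Gamma_{n+1}$ into $\Omega_n$; (iii) the iterates $A_n, A_n^2$ and $B_n\circ A_n^k$ keep the image in the relevant domain $\Omega_n$ or $\Gamma_n$ at each stage; (iv) the final image lies in the range of $\Phi_{n+1}$, so the outer inversion is legitimate.

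Sub-tasks (ii)--(iv) follow by perturbation from the one-dimensional case, where the analogous inclusions hold with definite slack thanks to the structure of the Siegel renormalization fixed point; the quantitative closeness $O(|a|^{q_n})$ is more than enough to preserve them after only cosmetic shrinking. The affine rescaling $\Lambda_{n+1}$ is determined by the normalization conditions on $A_{n+1}$ at the origin; explicit formulas give $|\lambda_{n+1}|$ bounded away from $0$ and $1$ uniformly in $n$, which in turn justifies the compounding formula $V_n = \lambda_1^{-1}\cdots\lambda_n^{-1} V$ as a reasonable vertical domain (it grows geometrically but stays within the vertical region where renormalization estimates are valid).

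The main obstacle is sub-task (i). In the one-dimensional limit, $a_*(x) = x^2 + \text{const}$ is a quadratic polynomial which is univalent on $Z_*$ only if $Z_*$ avoids a neighborhood of the critical point $x=0$, so $Z$ must be chosen to sit on one side of this critical point while still containing the relevant dynamical features (the fold and the pre-fixed point $(1,0)$). Once $Z$ is arranged in this way, for $n$ large $(a_n)_y$ is an $O(|a|^{q_n})$-perturbation of $a_*$ uniformly in $y \in V_n$; hence its derivative is uniformly non-vanishing on $Z$, giving univalence by the argument principle, and the image covers $\lambda_{n+1}Z + c_{n+1}$ by a Rouch\'e/Koebe distortion comparison with the one-dimensional limit. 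Propagating the construction inductively and tracking the $|a|^{q_n}$ error then yields the lemma for all $n$ sufficiently large.
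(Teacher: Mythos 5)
The paper itself does not prove this lemma; it is one of the background facts imported verbatim from \cite{Yan1} in the preliminary summary, so there is no internal argument to compare against. Your sketch follows the expected perturbative strategy (work near the degenerate one-dimensional renormalization limit, track errors of size $O(|a|^{q_n})$, and chase domains through the composition defining $A_{n+1}, B_{n+1}$), and the inductive architecture is reasonable. But sub-task (i) contains a genuine conceptual error. You assert that $Z$ ``must be chosen to sit on one side of the critical point,'' which directly contradicts the statement being proved: the lemma requires $Z$ to contain $0$, and the unique simple critical point of $\eta_n$ lies $C\bar\epsilon^{2^n}$-close to $0$, hence \emph{inside} $Z$. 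The inversion $(a_n)_y^{-1}$ in $H_{n+1}$ is never applied on $Z$ itself. Since $\Phi_{n+1} = H_{n+1}\circ \Lambda_{n+1}$ with $\Lambda_{n+1}(x,y) = (\lambda_{n+1}x + c_{n+1}, \lambda_{n+1}y)$, the inverse branch is applied to the small rescaled and recentred disc $\lambda_{n+1}Z + c_{n+1}$, which lies near the point $c_{n+1}$, away from the critical \emph{value} of $\eta_n$. It is the affine rescaling, not a special choice of $Z$, that keeps the inversion away from the branch point; your argument does not engage with this and hence misidentifies which geometric constraint is doing the work.

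Two smaller but related inaccuracies: the one-dimensional limit $\eta_*$ is \emph{not} a quadratic polynomial $x^2 + \text{const}$; it is an analytic map with a simple critical point and quadratic-like local behaviour, and this distinction matters when you invoke distortion and Rouch\'e-type comparisons. And your justification for the definition $V_n = \lambda_1^{-1}\cdots\lambda_n^{-1}V$ (``grows geometrically but stays within the vertical region where renormalization estimates are valid'') glosses over the actual mechanism: the geometric growth $|\lambda_1^{-1}\cdots\lambda_n^{-1}| \asymp |\lambda_*|^{-n}$ is harmless precisely because of the \emph{super-exponential} degeneration of the $y$-dependence in \lemref{degeneration}, $\|\partial_y A_n\|, \|\partial_y B_n\| = O(\bar\epsilon^{2^n})$, which swamps any geometric factor. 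Without invoking this decay explicitly, the inductive step does not close, since the domain you must control in the vertical direction genuinely expands from one scale to the next.
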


\begin{lem}\label{degeneration}
The dependence of $\Sigma_n = (A_n, B_n)$ on $y$ decays \emph{super-exponentially} fast. That is, we have
$$
\sup_{(x,y) \in \Omega}\|\partial_y A_n(x,y)\| < C\bar\epsilon^{2^n}
\hspace{5mm} \text{and} \hspace{5mm}
\sup_{(x,y) \in \Gamma}\|\partial_y B_n(x,y)\| < C\bar\epsilon^{2^n}.
$$
for some uniform constant $C >0$.
\end{lem}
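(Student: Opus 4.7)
The plan is to prove the bound by induction on $n$, where at each step I establish the \emph{quadratic} recursion $M_{n+1} \leq C M_n^2$ with $M_n := \max(\sup_{\Omega_n}\|\partial_y A_n\|, \sup_{\Gamma_n}\|\partial_y B_n\|)$. The super-exponential bound $M_n \lesssim \bar\epsilon^{2^n}$ then follows by iteration, with base case $M_0 \lesssim \bar\epsilon$ coming directly from $\partial_y F_a = (-a, 0)^{\top}$ combined with the linearity of $\Phi_0$.

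The key structural observation driving the induction is that $H_{n+1}$ is tailored precisely to invert the first coordinate of $A_n$: since $H_{n+1}(u,v) = ((a_n)_v^{-1}(u), v)$, a direct calculation gives
$$
A_n \circ H_{n+1}(u, v) \; = \; \big(u, \; h_n\big((a_n)_v^{-1}(u), v\big)\big).
$$
Thus after precomposing $A_n$ with $\Phi_{n+1} = H_{n+1} \circ \Lambda_{n+1}$, the first output coordinate is simply $\lambda_{n+1} x + c_{n+1}$ with \emph{no} $y$-dependence, and all residual $y$-dependence is routed through the second coordinate
$$
V(x,y) \; := \; h_n\big((a_n)_{\lambda_{n+1} y}^{-1}(\lambda_{n+1} x + c_{n+1}), \; \lambda_{n+1} y\big).
$$
Implicit differentiation of $(a_n)_v \circ (a_n)_v^{-1} = \Id$ yields $\partial_v (a_n)_v^{-1} = -\partial_y a_n / \partial_x a_n = O(M_n)$, and consequently $\partial_y V = O(\lambda_{n+1} M_n)$ --- it already carries one factor of smallness.

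The next step is to expand both $B_{n+1} = \Phi_{n+1}^{-1} \circ B_n \circ A_n \circ \Phi_{n+1}$ and $A_{n+1} = \Phi_{n+1}^{-1} \circ B_n \circ A_n^2 \circ \Phi_{n+1}$ using this simplification, together with $\Phi_{n+1}^{-1} = \Lambda_{n+1}^{-1} \circ H_{n+1}^{-1}$ where $H_{n+1}^{-1}(x,y) = (a_n(x,y), y)$. In each case, the outer H\'enon factor acts on a point whose only $y$-dependence lives in the second argument (which is $V$ or a composition with $V$). The chain rule then forces $\partial_y$ of the outermost output to be the $y$-column entry of the outer Jacobian (namely $\partial_y a_n$ or $\partial_y b_n$, each $O(M_n)$) multiplied by something of size $O(\lambda_{n+1} M_n)$ inherited from $\partial_y V$, for a total of $O(\lambda_{n+1} M_n^2)$. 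The concluding factor $1/\lambda_{n+1}$ from $\Lambda_{n+1}^{-1}$ exactly cancels the $\lambda_{n+1}$, leaving the desired $M_{n+1} = O(M_n^2)$. The second coordinate of $B_{n+1}$ is identically $x$ (the H\'enon form is preserved under renormalization), so that entry is trivially $y$-independent and causes no trouble.

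The main obstacle is justifying these estimates uniformly in $n$. In particular, one must verify that $\partial_x a_n$ stays bounded away from zero on the image of each intermediate composition --- so that the implicit-function formula for $\partial_v (a_n)_v^{-1}$ is quantitatively valid and the relevant branch of the fold of $a_n$ is selected consistently --- and that the implicit $O(\cdot)$ constants are independent of $n$, despite the fact that the vertical domain $V_n = \lambda_1^{-1}\cdots\lambda_n^{-1} V$ blows up. Both requirements should be supplied by the \emph{a priori} geometric control provided by the renormalization convergence theorems in \cite{GaYam} and \cite{Yan1}, but extracting them cleanly requires a careful choice of the topological discs $Z, W, V$ together with the inductive domain-control statement of \lemref{definition}.
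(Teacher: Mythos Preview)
The paper does not prove this lemma; Section~\ref{sec:prelim} is a summary of results from \cite{Yan1}, where the full argument appears. Your approach --- establishing the quadratic recursion $M_{n+1}\le KM_n^2$ via the identity $A_n\circ H_{n+1}(u,v)=\big(u,\,h_n((a_n)_v^{-1}(u),v)\big)$ and then iterating --- is the standard one and is essentially what \cite{Yan1} does. The structural observation that $\Phi_{n+1}$ eliminates the $y$-dependence from the first coordinate of $A_n\circ\Phi_{n+1}$ is precisely the mechanism driving super-exponential decay.

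Your caveat about uniformity of the implied constants (the lower bound on $|\partial_x a_n|$ along the relevant orbits, and control despite the growing vertical domain $V_n$) is well placed; these are supplied by the convergence results and \emph{a priori} bounds in \cite{Yan1}, and constitute the substance behind \lemref{definition}. One cosmetic point: iterating $M_{n+1}\le KM_n^2$ from $M_0=|a|$ gives $M_n\le K^{-1}(K|a|)^{2^n}$, so the literal bound $C\bar\epsilon^{2^n}$ with a uniform $C$ only follows after absorbing $K$ into $\bar\epsilon$ (i.e.\ shrinking $\bar\epsilon$ so that $K\bar\epsilon<1$ and relabelling); this is harmless.
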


The one-dimensional projections of $A_n$, $B_n$ and $\Sigma_n$ are given by
\begin{equation}\label{eq:projections}
\eta_n(x) := a_n(x, 0)
\hspace{5mm} , \hspace{5mm}
\xi_n(x) := b_n(x,0)
\hspace{5mm} \text{and} \hspace{5mm}
\zeta_n := (\eta_n, \xi_n)
\end{equation}
respectively. By \lemref{definition}, we see that $\eta_n$ and $\xi_n$ are bounded analytic functions defined on $Z$ and $W$ respectively. Moreover, by \lemref{degeneration}, the dynamics of $\Sigma_n$ degenerates to that of $\zeta_n$ super-exponentially fast. It is shown in \cite{Yan1} that $\eta_n$ and $\xi_n$ each have a unique simple critical point which are $C\bar\epsilon^{2^n}$-close to each other. We choose the normalizing constants $\lambda_n$ and $c_n$ in \eqref{eq:affine rescaling} so that
$$
\xi_n(0) =1
\hspace{5mm} \text{and} \hspace{5mm}
\xi_n'(0) = 0.
$$

\subsection{Renormalization convergence}

Let $\zeta_* = (\eta_*, \xi_*)$ be the fixed point of the one-dimensional renormalization operator $\cR$ given in \cite{GaYam}. In particular, we have
\begin{equation}\label{eq:1d limit}
\lambda_*^{-1} \eta_* \circ \xi_* \circ \eta_* (\lambda_* x) = \eta_*(x)
\hspace{5mm} \text{and} \hspace{5mm}
\lambda_*^{-1} \eta_* \circ \xi_* (\lambda_* x)=\xi_*(x),
\end{equation}
where
$$
\lambda_* := \eta_* \circ \xi_* (0) \in \bbD
$$
is the universal scaling factor.

Convergence under renormalization for semi-Siegel H\'enon maps was first obtained in \cite{GaYam}. For the renormalization operator $\bfR$ defined above, the proof of convergence is given in \cite{Yan1}.

\begin{thm}\label{convergence}
As $n \to \infty$, we have the following convergences (each of which occurs at an exponential rate):
\begin{enumerate}[(i)]
\item $\zeta_n =(\eta_n, \xi_n) \to \zeta_* = (\eta_*, \xi_*)$;
\item $\lambda_n \to \lambda_*$; and
\item $\Phi_n \to \Phi_*$, where 
$$
\Phi_*(x,y) :=
\begin{bmatrix}
\phi_*(x) \\
\lambda_* y
\end{bmatrix}=
\begin{bmatrix}
\eta_*^{-1}(\lambda_* x) \\
\lambda_* y
\end{bmatrix}.
$$
\end{enumerate}
\end{thm}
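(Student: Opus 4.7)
The plan is to reduce convergence of the two-dimensional renormalization operator $\bfR$ to the already-established hyperbolicity of the one-dimensional operator $\cR$ from \cite{GaYam}, using \lemref{degeneration} to control the deviation between the $\bfR$-orbit and the $\cR$-orbit of the projections $\zeta_n$.

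First, I would verify that the projections nearly satisfy the one-dimensional renormalization relation. Evaluating
$$
A_{n+1} = \Phi_{n+1}^{-1} \circ B_n \circ A_n^2 \circ \Phi_{n+1}, \qquad B_{n+1} = \Phi_{n+1}^{-1} \circ B_n \circ A_n \circ \Phi_{n+1}
$$
along the line $\{y=0\}$ and treating the $y$-dependence of $A_n, B_n$ as a perturbation of size $O(\bar\epsilon^{2^n})$ by \lemref{degeneration}, a direct expansion yields
$$
\zeta_{n+1} \;=\; \cR(\zeta_n) \;+\; O(\bar\epsilon^{2^n}),
$$
where $\cR$ is the one-dimensional renormalization operator of \cite{GaYam} normalized so that $\xi_{n+1}(0)=1$ and $\xi_{n+1}'(0)=0$. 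This matching of normalizations is automatic, since \eqref{eq:1d limit} pins down $\lambda_*$ via the same two conditions applied to $\xi_*$, forcing $c_*=0$.

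Next, by \cite{GaYam}, $\cR$ is hyperbolic at $\zeta_*$ with a codimension-one stable manifold corresponding to one-dimensional analytic pairs with golden-mean Siegel rotation number. Since $F_a$ has exactly this rotation number by construction, the initial projection $\zeta_0$ lies on this stable manifold, so the unperturbed iterate $\cR^n(\zeta_0)$ tends to $\zeta_*$ at an exponential rate $\rho^n$ governed by the stable spectrum of $D\cR(\zeta_*)$. A standard shadowing argument then shows that the perturbed sequence $\zeta_n$ satisfies the same bound $\|\zeta_n - \zeta_*\| \le C\rho^n$, with the super-exponentially small $O(\bar\epsilon^{2^n})$ error harmlessly absorbed into the constant. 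This proves (i). Statements (ii) and (iii) then follow: $\lambda_{n+1}$ and $c_{n+1}$ are determined analytically from $\zeta_n$ by the normalization $\xi_{n+1}(0)=1$, $\xi_{n+1}'(0)=0$, so they converge exponentially; and $\Phi_{n+1} = H_{n+1} \circ \Lambda_{n+1}$ converges to $\Phi_*$ because $(a_n)_y^{-1}(x) \to \eta_*^{-1}(x)$ uniformly on compacts (combining $\eta_n \to \eta_*$ with \lemref{degeneration} to kill $y$-dependence), while $\Lambda_{n+1}(x,y) \to (\lambda_* x, \lambda_* y)$ by (ii).

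The main obstacle is the first step: carrying out the algebraic comparison between $\bfR$ applied to the H\'enon pair and $\cR$ applied to its projection $\zeta_n$, and verifying that the non-linear change of coordinates $\Phi_{n+1}$---which inverts the $y$-dependent family $(a_n)_y$---really produces a remainder controlled entirely by $\partial_y A_n$ and $\partial_y B_n$. A secondary technical point is identifying the stable manifold of $\cR$ with the locus of golden-mean rotation in the one-dimensional pair space, so that $\zeta_0$ genuinely lies in the basin of attraction of $\zeta_*$; this rests on uniqueness of the Siegel parameter within the quadratic family and is essentially the content imported from \cite{GaYam}.
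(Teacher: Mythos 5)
The paper does not prove \thmref{convergence}; it cites \cite{GaYam} for the original convergence result and \cite{Yan1} for its adaptation to the operator $\bfR$ as defined here, so the comparison must be to the argument in those references rather than anything in this text.

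Your sketch is on the right track in broad strokes---reduce to the hyperbolic fixed point of the one-dimensional operator $\cR$ and use the decay of $y$-dependence to treat the two-dimensional renormalization as a perturbation---but the shadowing step as you have written it does not close, and the gap is exactly the one that the full machinery of \cite{GaYam,Yan1} is designed to plug. You write $\zeta_{n+1}=\cR(\zeta_n)+\epsilon_n$ with $\epsilon_n=O(\bar\epsilon^{2^n})$ and then conclude $\|\zeta_n-\zeta_*\|=O(\rho^n)$ by ``a standard shadowing argument.'' But $\cR$ is hyperbolic with a \emph{one-dimensional unstable direction} (eigenvalue $\mu>1$), and forward shadowing amplifies errors along that direction: writing the unstable coordinate as $u_{n+1}=\mu u_n+O(\epsilon_n)$ with $u_0=0$ gives
$$
u_n=\sum_{j=0}^{n-1}\mu^{\,n-1-j}\,O(\epsilon_j),
$$
whose $j=0$ term already grows like $\mu^{n-1}\bar\epsilon$. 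Super-exponential smallness of $\epsilon_j$ does not help here; the offending contribution is the earliest, largest error, not the later ones. Moreover, your premise that ``$\zeta_0$ lies on the stable manifold of $\cR$'' is not something you can assert: $\zeta_0$ is the one-dimensional \emph{projection} of a genuinely two-dimensional pair, and the semi-Siegel condition on the fixed point of $F_a$ constrains a two-dimensional multiplier, not the projected pair; generically $\zeta_0\notin W^s(\zeta_*)$, so $\cR^n(\zeta_0)$ by itself would already diverge.

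The missing ingredient is \emph{a priori bounds}: one must first establish (independently) that the two-dimensional renormalizations $\Sigma_n$, and hence the projections $\zeta_n$, remain in a fixed compact set for all $n$. Once boundedness is known, one runs the unstable estimate \emph{backwards}: $u_n=\mu^{-1}\bigl(u_{n+1}-O(\epsilon_n)\bigr)$, so $|u_n|\le\mu^{-k}M+\sum_{j\ge0}\mu^{-j-1}\epsilon_{n+j}$ for all $k$, and letting $k\to\infty$ gives $|u_n|=O(\epsilon_n)$---i.e.\ the unstable component is \emph{automatically} super-exponentially small once the orbit is bounded, with no assumption that $\zeta_0\in W^s$. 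The stable component is then handled by your forward argument, yielding the exponential rate governed by the stable spectrum of $D\cR(\zeta_*)$. Equivalently, and this is closer to how \cite{GaYam} phrase it, one shows that $\bfR$ itself is hyperbolic at the degenerate fixed point with codimension-one stable manifold, and that H\'enon maps with a golden-mean semi-Siegel fixed point lie on it; the boundedness of $\Sigma_n$ is then a consequence of being on that stable manifold rather than a separate input. Either way, the compactness input must be supplied before any shadowing can be made to work, and your proposal omits it. Your derivations of (ii) and (iii) from (i) are fine once (i) is secured.
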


\subsection{First return iterate}

Define the {\it $n$th microscope map of depth $k$} by
$$
\Phi_n^{n+k} := \Phi_{n+1} \circ \Phi_{n+2} \circ \ldots{} \circ \Phi_{n+k}.
$$
Let
$$
\Omega_n^{n+k} := \Phi_n^{n+k}(\Omega_{n+k})
\hspace{5mm} \text{and} \hspace{5mm}
\Gamma_n^{n+k} := \Phi_n^{n+k}(\Gamma_{n+k}).
$$
Observe that $\{\Omega_n^{n+k} \cup \Gamma_n^{n+k}\}_{k=0}^\infty$ is a nested sequence of open sets. See \figref{fig:microscope}.

\begin{prop}\label{2dcap}
Let $\lambda_* \in \bbD$ be the universal scaling factor. Then we have
$$
\|D\Phi_n^{n+k}\| \asymp |\lambda_*|^k.
$$
Consequently, there exists a point $(\kappa_n, 0) \in \bbC^2$, called the \emph{$n$th fold}, such that
$$
\bigcap_{k =0}^\infty \Phi_n^{n+k}((Z \cup W) \times V) = \{(\kappa_n,0)\}.
$$
\end{prop}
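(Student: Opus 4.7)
The plan is to reduce the two-dimensional statement to the one-dimensional renormalization picture by exploiting the super-exponential decay of $y$-dependence (\lemref{degeneration}) and the exponential convergence in \thmref{convergence}. I will first establish the derivative estimate $\|D\Phi_n^{n+k}\| \asymp |\lambda_*|^k$ and then use it to show that the images $\Phi_n^{n+k}((Z \cup W) \times V)$ have shrinking diameter and accumulate on a unique point.

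For the derivative bound, I would decompose $\Phi_{n+j} = H_{n+j} \circ \Lambda_{n+j}$, where $\Lambda_{n+j}$ is affine with linear part $\lambda_{n+j} \Id$, and $H_{n+j}(x,y) = ((a_{n+j-1})_y^{-1}(x), y)$. Implicit differentiation gives
$$
DH_{n+j}(x,y) = \begin{pmatrix} 1/((a_{n+j-1})_y)'((a_{n+j-1})_y^{-1}(x)) & \ast \\ 0 & 1 \end{pmatrix},
$$
with $\ast = -\partial_y a_{n+j-1}/((a_{n+j-1})_y)'$ evaluated at the relevant point. By \lemref{degeneration} the $\ast$-entry is super-exponentially small, and by \thmref{convergence} the diagonal entries converge to those of $DH_*$ and are bounded away from $0$ and $\infty$, while $|\lambda_{n+j}| \to |\lambda_*|$ exponentially. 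Thus $D\Phi_{n+j}$ is an essentially lower-triangular matrix with both diagonal entries of modulus $\asymp |\lambda_*|$. The chain rule produces a product of $k$ such matrices, which remains essentially lower-triangular with diagonal entries of modulus $\asymp \prod_{j=1}^k |\lambda_{n+j}| \asymp |\lambda_*|^k$ and off-diagonal entry super-exponentially small relative to the diagonal. Hence $\|D\Phi_n^{n+k}\| \asymp |\lambda_*|^k$, uniformly on the base scope.

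For the fold point, the derivative bound together with the mean value inequality yields $\diam \Phi_n^{n+k}((Z \cup W) \times V) \lesssim |\lambda_*|^k \to 0$. To exhibit a common limit, set $\bfv_k := \Phi_n^{n+k}(0,0)$, so that
$$
\|\bfv_{k+1} - \bfv_k\| = \|\Phi_n^{n+k}(\Phi_{n+k+1}(0,0)) - \Phi_n^{n+k}(0,0)\| \lesssim |\lambda_*|^k \cdot \|\Phi_{n+k+1}(0,0)\|.
$$
Since $\Phi_{n+k+1}(0,0) \to \Phi_*(0,0) = (\eta_*^{-1}(0),0)$ by \thmref{convergence} and is thus uniformly bounded in $k$, the sequence $\bfv_k$ is Cauchy and converges to some point $(\kappa_n, y_n)$. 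Combined with the diameter estimate, this forces $\bigcap_k \Phi_n^{n+k}((Z \cup W) \times V) = \{(\kappa_n, y_n)\}$. Finally, the $y$-component of $\Phi_{n+j}$ is simply $y \mapsto \lambda_{n+j} y$, so the $y$-coordinate of every point in $\Phi_n^{n+k}((Z \cup W) \times V)$ lies in $\lambda_{n+1} \cdots \lambda_{n+k} \cdot V$, which shrinks to $\{0\}$. Hence $y_n = 0$.

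The principal obstacle is to make the derivative estimate uniform over the entire base scope rather than at a single reference point, since the nonlinear factor $H_{n+j}$ could in principle produce large distortion. The key inputs from \secref{sec:prelim} resolve this: \thmref{convergence} furnishes a uniform limit $\Phi_*$ of the $\Phi_{n+j}$, and \lemref{degeneration} controls the off-diagonal $y$-dependence super-exponentially. Once these uniform bounds are in place, the telescoping argument becomes essentially one-dimensional.
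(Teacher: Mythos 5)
The paper itself does not prove \propref{2dcap}---it is recalled from \cite{Yan1} without argument (the whole of \secref{sec:prelim} is a summary of that paper)---so there is no in-text proof to compare you against. Your overall strategy is the natural one and the skeleton is sound: decompose $\Phi_{n+j}=H_{n+j}\circ\Lambda_{n+j}$, observe that $DH_{n+j}$ is triangular with super-exponentially small off-diagonal by \lemref{degeneration}, use convergence to $\Phi_*$ from \thmref{convergence} to control the nonlinear horizontal factor, multiply out by the chain rule, then telescope to extract the fold and read off that its $y$-coordinate is $0$ because the second component of each $\Phi_{n+j}$ is exactly $y\mapsto\lambda_{n+j}y$.

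Two things you should tighten. First, a cosmetic slip: the matrix you write for $DH_{n+j}$ is \emph{upper}-triangular (zero $(2,1)$ entry), not lower-triangular. Second, and more substantively, the claim that ``both diagonal entries of $D\Phi_{n+j}$ have modulus $\asymp|\lambda_*|$'' is imprecise in a way that touches the heart of the estimate. The $(2,2)$ entry is exactly $\lambda_{n+j}$, but the $(1,1)$ entry is $\lambda_{n+j}\cdot\bigl((a_{n+j-1})_y^{-1}\bigr)'(\cdot)$, and as the paper records (\propref{limit phi x}, and more precisely \thmref{cap derivative} where $u_n=\lambda_*^2(1+O(\rho^n))$), at the fold this entry tends to $\lambda_*^2$, not $\lambda_*$. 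This does not hurt the conclusion---since $|\lambda_*|<1$ the vertical diagonal dominates and $\|D\Phi_n^{n+k}\|\asymp|\lambda_*|^k$ still holds---but it matters for the \emph{uniformity in $k$} that you correctly flag as the principal obstacle. A product of $k$ factors each merely known to be ``$\asymp|\lambda_*|$'' with $j$-independent implied constants $C^{\pm1}$ would only give $\|D\Phi_n^{n+k}\|\lesssim C^k|\lambda_*|^k$, which is useless. What rescues the argument is that in the chain-rule product $D\Phi_{n+1}(\Phi_{n+1}^{n+k}\cdot)\cdots D\Phi_{n+k}(\cdot)$ all but a bounded tail of the base points lie deep inside the nested sets $\Omega_{n+j}^{n+k}\cup\Gamma_{n+j}^{n+k}$, hence near the fold, where the $(1,1)$ entry is genuinely close to $\lambda_*^2<|\lambda_*|$; the horizontal product is then $\lesssim|\lambda_*|^{2k}\ll|\lambda_*|^k$ and the vertical product, being exactly $\prod\lambda_{n+j}$, sets the size. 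Your appeal to uniform convergence $\Phi_{n+j}\to\Phi_*$ gestures in this direction but does not by itself preclude the multiplicative errors from accumulating; you need the contraction of the nested domains (or equivalently \propref{limit micro}, which linearizes $\check\phi_n^{n+k}$ at the fold with multiplier $d_n^{n+k}\approx\lambda_*^{2k}$) to pin the intermediate base points down. With that added, the telescoping step and the identification $y_n=0$ are correct as you wrote them.
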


\begin{rem}
The fold is a dynamically defined point with the same combinatorial address as the critical value $\xi_*(0) = 1$ for the one-dimensional renormalization limit $\zeta_* = (\eta_*, \xi_*)$. In \cite{dCLyMa}, its analog is referred to as the {\it tip}.
\end{rem}

\begin{figure}[h]
\centering
\includegraphics[scale=0.25]{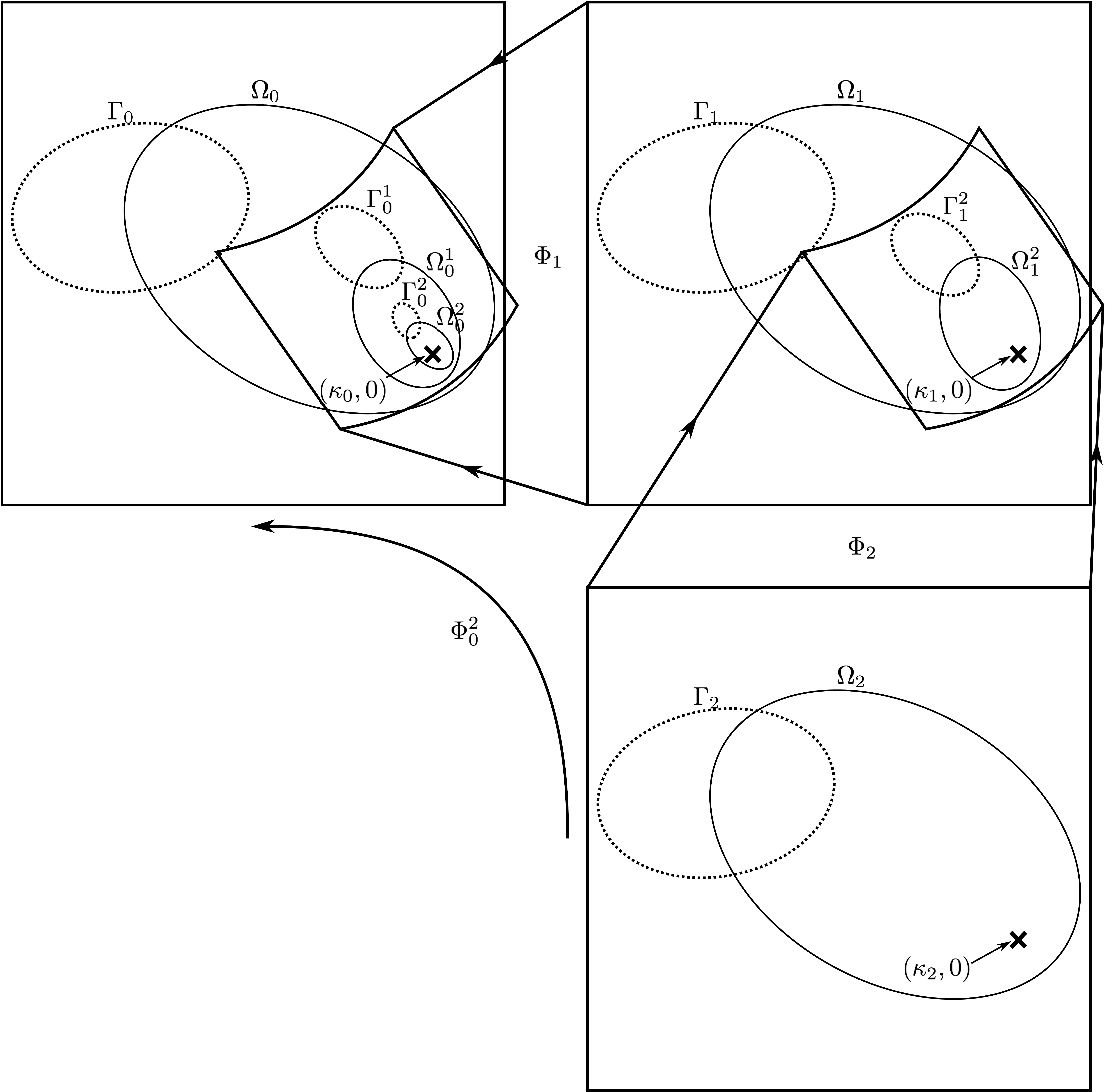}
\caption{The renormalization microscope map $\Phi_0^2$ obtained by composing the non-linear changes of coordinates $\Phi_1$ and $\Phi_2$. We have $\Omega_0^1 = \Phi_1(\Omega_1)$, $\Gamma_0^1 = \Phi_1(\Gamma_1)$, $\Omega_0^2 =\Phi_0^2(\Omega_2)$, $\Gamma_0^2 = \Phi_0^2(\Gamma_2)$, and $(\kappa_0,0)=\Phi_1(\kappa_1,0) = \Phi_0^2(\kappa_2,0)$.}
\label{fig:microscope}
\end{figure}

We denote by
$$
p\Sigma_n^{n+k} = (pA_n^{n+k}, pB_n^{n+k})
\hspace{5mm} \text{for} \hspace{5mm}
n, k \geq 0
$$
the sequence of pairs of iterates of $\Sigma_n=(A_n,B_n)$ defined inductively as follows:
\begin{enumerate}[(i)]
\item $p\Sigma_n^n := \Sigma_n$, and
\item $p\Sigma_n^{n+k+1} := (pB_n^{n+k} \circ (pA_n^{n+k})^2, pB_n^{n+k} \circ pA_n^{n+k})$.
\end{enumerate}
It is not hard to see that we have
\begin{equation}\label{eq:prerenorm}
A_{n+k} = (\Phi_n^{n+k})^{-1} \circ pA_n^{n+k} \circ \Phi_n^{n+k}
\hspace{5mm} \text{and} \hspace{5mm}
B_{n+k} = (\Phi_n^{n+k})^{-1} \circ pB_n^{n+k} \circ \Phi_n^{n+k}.
\end{equation}
Moreover, by \lemref{definition}, $pA_n^{n+k}$ and $pB_n^{n+k}$ are well-defined on $\Omega_n^{n+k}$ and $\Gamma_n^{n+k}$ respectively.

Let
$$
\hat\Omega_0^n := \Phi_0(\Omega_0^n)
\hspace{5mm} \text{and} \hspace{5mm}
\hat\Gamma_0^n := \Phi_0(\Gamma_0^n),
$$
where $\Phi_0$ is given in \eqref{eq:initial normalization}. By construction, the $n$th renormalization $$\bfR^n(F_a) := \Sigma_n = (A_n, B_n)$$ of $F_a$ represents the first return map under $F_a$ to the neighborhood $\hat\Omega_0^n \cup \hat\Gamma_0^n$. The following lemma is a formal statement of this fact.

\begin{lem}\label{renorm iterate}
Let $\{q_n\}_{n=0}^\infty$ be the Fibonacci sequence defined in \eqref{eq:fibonacci}. Define
$$
p\hat A_0^n := F_a^{q_{2n+1}}|_{\hat \Omega_0^n}
\hspace{5mm} \text{and} \hspace{5mm}
p\hat B_0^n := F_a^{q_{2n}}|_{\hat \Gamma_0^n}.
$$
Then
$$
pA_0^n = \Phi_0^{-1} \circ p\hat A_0^n \circ \Phi_0
\hspace{5mm} \text{and} \hspace{5mm}
pB_0^n = \Phi_0^{-1} \circ p\hat B_0^n \circ \Phi_0.
$$
Thus, $\Sigma_n = (A_n, B_n)$ is given by
$$
A_n = (\Phi_0^n)^{-1} \circ \Phi_0^{-1} \circ p\hat A_0^n \circ \Phi_0 \circ \Phi_0^n
\hspace{5mm} \text{and} \hspace{5mm}
B_n = (\Phi_0^n)^{-1} \circ \Phi_0^{-1} \circ p\hat B_0^n \circ \Phi_0 \circ \Phi_0^n.
$$
\end{lem}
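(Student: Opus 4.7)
My plan is to prove the two identities $pA_0^n = \Phi_0^{-1}\circ p\hat A_0^n\circ \Phi_0$ and $pB_0^n = \Phi_0^{-1}\circ p\hat B_0^n\circ \Phi_0$ by induction on $n$. Once these are established, the reduction to Fibonacci bookkeeping is elementary, and the second pair of formulas for $A_n$ and $B_n$ in the ``Thus'' clause will follow by direct substitution into \eqref{eq:prerenorm}.

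For the base case $n=0$, the empty composition $\Phi_0^0$ is the identity, so $\hat\Omega_0^0 = \hat\Omega_0$ and $\hat\Gamma_0^0 = \hat\Gamma_0$. The initial normalization gives
$pA_0^0 = A_0 = \Phi_0^{-1}\circ \hat A_0\circ \Phi_0 = \Phi_0^{-1}\circ F_a|_{\hat\Omega_0}\circ \Phi_0$,
which agrees with $\Phi_0^{-1}\circ p\hat A_0^0\circ \Phi_0$ since $q_1=1$, and analogously for $B$. For the inductive step, suppose the identities hold at level $n$. Since conjugation by $\Phi_0$ commutes with composition, the pair recurrence $pA_0^{n+1} = pB_0^n\circ (pA_0^n)^2$ yields
$$
pA_0^{n+1} \;=\; \Phi_0^{-1}\circ p\hat B_0^n\circ (p\hat A_0^n)^2\circ \Phi_0 \;=\; \Phi_0^{-1}\circ F_a^{q_{2n}+2q_{2n+1}}\circ \Phi_0,
$$
and $pB_0^{n+1} = pB_0^n\circ pA_0^n$ yields the analogous formula with exponent $q_{2n}+q_{2n+1}$. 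Two applications of the Fibonacci recurrence \eqref{eq:fibonacci} give $q_{2n}+2q_{2n+1} = q_{2n+3}$ and $q_{2n}+q_{2n+1} = q_{2n+2}$, completing the induction at the level of formulas.

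The delicate point I expect to be the main obstacle is the domain bookkeeping: one must verify that $\hat\Omega_0^{n+1}$ is precisely the subset of $\hat\Omega_0^n$ on which the iterate $F_a^{q_{2n+3}}$ factors as $F_a^{q_{2n}}\circ F_a^{q_{2n+1}}\circ F_a^{q_{2n+1}}$ with every intermediate image lying in the appropriate piece of the pair-domain, and similarly for $\hat\Gamma_0^{n+1}$. This is guaranteed by the renormalization construction itself: the non-linear change of coordinates $\Phi_{n+1} = H_{n+1}\circ \Lambda_{n+1}$ is chosen so that the pre-renormalized iterates $pA_n^{n+k}$ and $pB_n^{n+k}$ send the nested sets $\Omega_n^{n+k}$ and $\Gamma_n^{n+k}$ into each other in the correct order, and the nesting $\Omega_n^{n+k+1}\cup\Gamma_n^{n+k+1}\subset \Omega_n^{n+k}\cup\Gamma_n^{n+k}$ from \propref{2dcap} is exactly what makes the compositions well-defined. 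Once the inclusions are in place, the ``Thus'' clause is immediate: applying \eqref{eq:prerenorm} with $n$ replaced by $0$ and $k$ replaced by $n$ gives $A_n = (\Phi_0^n)^{-1}\circ pA_0^n\circ \Phi_0^n$ and $B_n = (\Phi_0^n)^{-1}\circ pB_0^n\circ \Phi_0^n$, and substituting the formulas just proven produces the final two displays in the lemma.
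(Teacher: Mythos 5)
Your proof is correct. The paper in fact gives no explicit proof of this lemma — it is presented immediately after the remark that ``by construction, the $n$th renormalization $\bfR^n(F_a)$ represents the first return map,'' and the lemma is described only as ``a formal statement of this fact'' — so your induction on $n$ is precisely the argument the authors leave to the reader. The base case ($q_0=q_1=1$, so $p\hat A_0^0=\hat A_0$ and $p\hat B_0^0=\hat B_0$), the use of the recurrence $p\Sigma_0^{n+1}=(pB_0^n\circ(pA_0^n)^2,\,pB_0^n\circ pA_0^n)$, and the Fibonacci identities $q_{2n}+q_{2n+1}=q_{2n+2}$ and $q_{2n}+2q_{2n+1}=q_{2n+3}$ are all exactly right, as is the appeal to \eqref{eq:prerenorm} with $(n,k)\mapsto(0,n)$ for the ``Thus'' clause. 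Your flagging of the domain bookkeeping as the only nontrivial point, and its resolution via the nesting $\Omega_0^{n+1}\cup\Gamma_0^{n+1}\subset\Omega_0^n\cup\Gamma_0^n$ from \propref{2dcap} together with \lemref{definition}, is the correct resolution.
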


\subsection{Limit of microscope maps}

Let
$$
\phi_n(x) := \Phi_n(x,0)
\hspace{5mm} \text{and} \hspace{5mm}
\phi_n^{n+k}(x) := \Phi_n^{n+k}(x,0).
$$
It is not difficult to see that
$$
\phi_n^{n+k}= \phi_{n+1} \circ \phi_{n+2} \ldots \phi_{n+k}.
$$
Furthermore, we have
$$
\phi_n(\kappa_n) = \kappa_{n-1}
\hspace{5mm} \text{and} \hspace{5mm}
\phi_n^{n+k}(\kappa_{n+k}) = \kappa_n,
$$
where $(\kappa_n, 0)$ is the $n$th fold given in \propref{2dcap}. Denote
$$
d_n := \phi_n'(\kappa_n)
\hspace{5mm} \text{and} \hspace{5mm}
d_n^{n+k} := (\phi_n^{n+k})'(\kappa_{n+k}) = d_{n+1} d_{n+2} \ldots d_{n+k}.
$$

\begin{prop}
The map $\phi_* : Z\cup W \to Z$ given in \thmref{convergence} has an attracting fixed point at $1$ with multiplier $\lambda_*^2$.
\end{prop}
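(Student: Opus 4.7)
The plan is to reduce everything to the explicit formula $\phi_*(x)=\eta_*^{-1}(\lambda_* x)$ furnished by \thmref{convergence}(iii), and then to extract the needed information about $\eta_*$ at the point $1$ from the functional equations \eqref{eq:1d limit} together with the normalizations $\xi_n(0)=1$, $\xi_n'(0)=0$ (which pass to the limit via \thmref{convergence}(i)).

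First I would verify that $1$ is a fixed point. Passing $\xi_n(0)=1$ to the limit gives $\xi_*(0)=1$, and the defining relation $\lambda_*=\eta_*\circ\xi_*(0)$ then reads $\lambda_*=\eta_*(1)$. Hence
\[
\phi_*(1)=\eta_*^{-1}(\lambda_*)=\eta_*^{-1}(\eta_*(1))=1.
\]

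For the multiplier, the chain rule yields
\[
\phi_*'(1)=\lambda_*\,(\eta_*^{-1})'(\lambda_*)=\frac{\lambda_*}{\eta_*'(1)},
\]
so the claim $\phi_*'(1)=\lambda_*^2$ reduces to the identity $\eta_*'(1)=1/\lambda_*$. To pin down $\eta_*'(1)$ I would differentiate the second identity in \eqref{eq:1d limit}, rewritten as $\eta_*(\xi_*(\lambda_* x))=\lambda_*\xi_*(x)$, twice at $x=0$. The first derivative produces only an uninformative tautology $0=0$, because $\xi_*'(0)=0$. The second derivative, after the $\eta_*''$ contribution is annihilated by the factor $(\xi_*'(0))^2=0$, collapses to
\[
\lambda_*^2\,\eta_*'(\xi_*(0))\cdot\xi_*''(0)=\lambda_*\,\xi_*''(0).
\]
Since $0$ is a \emph{simple} critical point of each $\xi_n$ (hence of $\xi_*$), one has $\xi_*''(0)\neq 0$, and cancelling gives $\eta_*'(1)=1/\lambda_*$. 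Combined with the chain-rule formula above, this yields $\phi_*'(1)=\lambda_*^2$. Finally, $\lambda_*\in\bbD$ forces $|\lambda_*^2|<1$, so the fixed point is attracting.

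There is no real obstacle here; the only conceptual point is choosing the right equation in \eqref{eq:1d limit} to differentiate. The first identity would be useless near $x=0$ because $\eta_*$ itself has its critical point there (so the analogous computation degenerates to order higher than two), whereas the second identity, evaluated at the critical point of $\xi_*$, is precisely what encodes the scaling of $\eta_*$ at its critical value $1$ in terms of the universal scaling factor $\lambda_*$.
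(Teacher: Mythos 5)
Your computation is correct and complete. The fixed point claim follows immediately from $\xi_*(0)=1$ and $\lambda_*=\eta_*(1)$, and the multiplier computation via the second derivative of $\eta_*\circ\xi_*(\lambda_* x)=\lambda_*\xi_*(x)$ at the critical point $x=0$ is exactly the right move: the factor $\xi_*'(0)^2=0$ kills the $\eta_*''$ term, and the simplicity of the critical point ($\xi_*''(0)\neq 0$) lets you cancel to get $\eta_*'(1)=1/\lambda_*$, hence $\phi_*'(1)=\lambda_*/\eta_*'(1)=\lambda_*^2$. The paper states this proposition without proof, so there is no alternative argument to contrast with; your derivation is the natural one the authors evidently regard as routine, relying only on the normalizations $\xi_*(0)=1$, $\xi_*'(0)=0$, the fixed-point equations \eqref{eq:1d limit}, and the formula $\phi_*=\eta_*^{-1}(\lambda_*\,\cdot)$ from \thmref{convergence}. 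Your closing remark about why the second identity (rather than the first) is the useful one is a nice clarification: the first equation $\lambda_*^{-1}\eta_*\circ\xi_*\circ\eta_*(\lambda_* x)=\eta_*(x)$ would require differentiating through $\eta_*$ at its own critical point, degenerating the computation to higher order.
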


\begin{prop}\label{limit phi x}
As $n \to \infty$, we have the following convergences (each of which occurs at an exponential rate):
\begin{enumerate}[i)]
\item $\phi_n \to \phi_*$;
\item $\kappa_n \to 1$; and
\item $d_n \to \lambda_*^2$.
\end{enumerate}
\end{prop}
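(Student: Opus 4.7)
The plan is to deduce all three convergences from \thmref{convergence} together with the attracting fixed point structure of $\phi_*$ (namely, that $1$ is an attracting fixed point of $\phi_*$ with multiplier $\lambda_*^2 \in \bbD$, as established in the proposition immediately preceding this one).

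Part (i) should be essentially immediate: by construction $\phi_n(x) = \Phi_n(x,0)$ and $\phi_*(x) = \Phi_*(x,0)$, so restricting the exponential convergence $\Phi_n \to \Phi_*$ from \thmref{convergence}(iii) to the slice $\{y = 0\}$ yields $\phi_n \to \phi_*$ at the same exponential rate on the common domain of definition.

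For Part (ii), I would choose a small topological disk $U \ni 1$ with $\phi_*(U) \Subset U$ on which $\phi_*$ contracts with rate $M_* := |\lambda_*|^2 + \epsilon < 1$. By Part (i), for all $n$ above some threshold $N_0$ one also has $\phi_n(U) \subset U$ and $\phi_n$ contracts on $U$ with some rate $M < 1$. Moreover, there is a uniform $k_0$ with $\phi_n^{n+k_0}(Z \cup W) \subset U$ for all $n \geq N_0$, since $\phi_*^k$ maps $Z \cup W$ into $U$ for some $k$ (every orbit in $Z \cup W$ lies in the basin of $1$) and $\phi_n^{n+k}$ is close to $\phi_*^k$. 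The nested intersection $\{\kappa_n\} = \bigcap_k \phi_n^{n+k}(Z \cup W)$ from \propref{2dcap} then forces $\kappa_n \in U$. The defining relation $\kappa_n = \phi_{n+1}(\kappa_{n+1})$ combined with $\phi_*(1) = 1$ yields
\[
|\kappa_n - 1| \leq |\phi_{n+1}(\kappa_{n+1}) - \phi_{n+1}(1)| + |\phi_{n+1}(1) - \phi_*(1)| \leq M|\kappa_{n+1} - 1| + C\rho^{n+1},
\]
where $\rho < 1$ is the rate from Part (i). Iterating this bound $k$ times and sending $k \to \infty$ (using that $\kappa_{n+k} \in U$ stays bounded and $M < 1$) telescopes to $|\kappa_n - 1| \leq C' \rho^n$, giving the desired exponential convergence.

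Part (iii) then follows quickly: since $\phi_n \to \phi_*$ exponentially on a uniform domain, Cauchy's estimates for derivatives of holomorphic functions give $\phi_n' \to \phi_*'$ exponentially on any slightly shrunken neighborhood of $1$. Combined with $\kappa_n \to 1$ exponentially (Part (ii)) and $\phi_*'(1) = \lambda_*^2$, we conclude $d_n = \phi_n'(\kappa_n) \to \lambda_*^2$ at an exponential rate.

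The main subtlety is in Part (ii): the recursion $\kappa_n = \phi_{n+1}(\kappa_{n+1})$ runs ``backwards'', so one cannot directly infer smallness of $|\kappa_n - 1|$ from smallness of $|\kappa_{n+1} - 1|$. The trick is to first secure a common forward-invariant disk $U$ around $1$ in which $\phi_n$ is a uniform contraction for $n \geq N_0$, trap every such $\kappa_n$ inside $U$, and only then iterate the telescoping bound to express $|\kappa_n - 1|$ as a geometric sum dominated by $\rho^n$ in the limit $k \to \infty$.
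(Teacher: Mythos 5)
The paper states this proposition without proof---Section~\ref{sec:prelim} is an overview deferring to \cite{Yan1} for details---so there is no in-paper argument to compare against. On its own merits, your proof is essentially correct and uses the natural tools. Part~(i) is indeed immediate from \thmref{convergence}(iii) restricted to the slice $\{y=0\}$; part~(iii) follows from (i), (ii), and a Cauchy estimate on the first derivative; and the heart of the matter is part~(ii), which you handle correctly by the trap-then-telescope scheme: first confine $\kappa_n$ in a forward-invariant disk $U\ni 1$ on which the $\phi_m$ contract uniformly for $m$ large, then iterate the recursion $\kappa_n = \phi_{n+1}(\kappa_{n+1})$ and let $k\to\infty$, noting $M\rho<1$ so the geometric tail converges.

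The one place that deserves a more careful justification is your parenthetical claim that ``every orbit in $Z\cup W$ lies in the basin of $1$'' for $\phi_*$, which you use to produce the uniform $k_0$ with $\phi_*^{k_0}(Z\cup W)\Subset U$. This is not stated anywhere in the paper, and you invoke it as if it were obvious. It is in fact true, but you should say why: since $\phi_*(Z\cup W)\subset Z$ and $\phi_*$ is not an automorphism, the Denjoy--Wolff theorem (together with the fact that $\phi_*$ has an interior attracting fixed point at $1$) gives locally uniform convergence $\phi_*^k\to 1$ on $Z\cup W$; one then also needs $Z\cup W$ to be exhausted by compacts on which $\phi_*$ is defined up to the closure, which holds in this renormalization setup. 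Alternatively, you could sidestep the basin claim entirely by using the diameter estimate $\diam\big(\phi_n^{n+k}(Z\cup W)\big)\asymp |\lambda_*|^k$ from \propref{2dcap} together with the observation that the orbit of the point $1$ under $\phi_{n+k},\ldots,\phi_{n+1}$ stays in $U$ (by induction, since each $\phi_{n+j}(1)=1+O(\rho^{n+j})$ and each $\phi_{n+j}$ nearly contracts $U$), which places $\kappa_n$ within $O(|\lambda_*|^k)+O(\rho^n)$ of $1$ for every $k$, and hence within $O(\rho^n)$ of $1$. Either route closes the gap; as written, that single sentence is the only step of your argument that is not fully supported.
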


Let
\begin{equation}\label{eq:check phi x}
\check{\phi}_n^{n+k}(x) := \phi_n^{n+k}(x+\kappa_{n+k}) - \kappa_n
\hspace{5mm} \text{and} \hspace{5mm}
\check{\phi}_*(x) := \phi_*(x+1) -1.
\end{equation}
Observe that $\check{\phi}_n^{n+k}$ and $\check{\phi}_*$ both have an attracting fixed point at $0$ of multiplier $d_n^{n+k}$ and $\lambda_*^2$ respectively.

\begin{prop}\label{limit micro}
We have the following convergence (which occurs at an exponential rate):
$$
(d_n^{n+k})^{-1} \check\phi_n^{n+k} \to u_*
\hspace{5mm} \text{as} \hspace{5mm}
k \to \infty,
$$
where $u_*$ is the linearizing map for $\check\phi_*$ at $0$.
\end{prop}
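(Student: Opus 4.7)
The strategy is to combine a normal-families argument with a Schwarz-lemma-based contraction estimate, modeled on the classical proof of Koenigs' linearization theorem but adapted to the non-autonomous composition $\check\phi_n^{n+k} = \check\phi_{n+1}\circ\cdots\circ\check\phi_{n+k}$. The key preliminary identity, which follows from the factorization $\check\phi_n^{n+k} = \check\phi_n^{n+k-1}\circ\check\phi_{n+k}$ together with $d_n^{n+k} = d_n^{n+k-1}\cdot d_{n+k}$, is the recursion
$$T_k(x) := (d_n^{n+k})^{-1}\check\phi_n^{n+k}(x) = d_{n+k}^{-1}\, T_{k-1}(\check\phi_{n+k}(x)),$$
with normalization $T_k(0)=0$ and $T_k'(0)=1$ for every $k$.

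By \propref{limit phi x} each $\check\phi_j$ converges exponentially to $\check\phi_*$, which fixes $0$ with attracting multiplier $\lambda_*^2\in\bbD$. I would choose a disc $\bbD_r$ about the origin and a tail index $k_0$ so that $\check\phi_{n+j}(\bbD_r)\subset\bbD_{\sigma r}$ for all $j\ge k_0$, where $\sigma := |\lambda_*|^2+\delta<1$ for a small $\delta>0$. Composing these contractions yields $|\check\phi_n^{n+k}(x)|\le C\sigma^k\asymp |d_n^{n+k}|$ on $\bbD_r$ for large $k$, so $\{T_k\}$ is uniformly bounded, hence a normal family, on $\bbD_r$. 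Passing to the limit in the recursion in the compact-open topology, and using $d_{n+k}\to\lambda_*^2$ and $\check\phi_{n+k}\to\check\phi_*$, any accumulation $L$ satisfies
$$L(x) = \lambda_*^{-2}L(\check\phi_*(x)),\qquad L(0)=0,\qquad L'(0)=1.$$
By uniqueness of the Koenigs linearizer of $\check\phi_*$ at $0$, this forces $L=u_*$, so the entire sequence $T_k$ converges to $u_*$.

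For the exponential rate, set $\tilde T_k := T_k - u_*$ and use $u_*\circ\check\phi_* = \lambda_*^2 u_*$ to rewrite the recursion as
$$\tilde T_k(x) = d_{n+k}^{-1}\tilde T_{k-1}(\check\phi_{n+k}(x)) + E_k(x),$$
where the defect
$$E_k(x) = d_{n+k}^{-1}u_*(\check\phi_{n+k}(x)) - \lambda_*^{-2}u_*(\check\phi_*(x))$$
satisfies $\|E_k\|_{\bbD_r} = O(\rho^{n+k})$ for some $\rho\in(0,1)$, again by the exponential convergences in \propref{limit phi x}. Since $\tilde T_{k-1}(0) = \tilde T_{k-1}'(0) = 0$, the Schwarz lemma gives $|\tilde T_{k-1}(z)|\le (|z|/r)^2\|\tilde T_{k-1}\|_{\bbD_r}$; applying it at $z=\check\phi_{n+k}(x)\in\bbD_{\sigma r}$ and combining with $|d_{n+k}|^{-1} = |\lambda_*|^{-2}(1+o(1))$ yields
$$\|\tilde T_k\|_{\bbD_r} \leq \bigl(|\lambda_*|^2 + O(\delta)\bigr)\|\tilde T_{k-1}\|_{\bbD_r} + C\rho^{n+k}.$$
For $\delta$ sufficiently small the prefactor is strictly less than $1$, so iterating this inequality gives the required exponential decay of $\|\tilde T_k\|_{\bbD_r}$ in $k$.

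The main obstacle is the calibration of the disc $\bbD_r$: it must be simultaneously small enough that each $\check\phi_{n+j}$ with $j\ge k_0$ acts as a strict contraction with uniform factor $\sigma<1$ -- so that both the normal-family bound and the Schwarz-lemma estimate succeed with favorable constants -- while still lying inside the common domain provided by \lemref{definition}. This is essentially a bookkeeping task given the quantitative inputs of \propref{limit phi x}, but one must verify that the tail estimates from the one-dimensional renormalization theory survive uniformly in $n$.
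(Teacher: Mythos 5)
The paper does not present a proof of \propref{limit micro} in the body — it is quoted from the renormalization references (\cite{Yan1,GaYam}) — so there is no in-text argument to measure you against; I will assess the proposal on its own. Your strategy, a non-autonomous Koenigs linearization via an error recursion and the Schwarz lemma, is the natural one, and the second half of your argument (the $\tilde T_k$ recursion) is correct and gives the full statement including the exponential rate.

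The first, normal-families paragraph however contains an incorrect step. You assert $|\check\phi_n^{n+k}(x)|\le C\sigma^k \asymp |d_n^{n+k}|$ with $\sigma=|\lambda_*|^2+\delta$, but by \propref{limit phi x} (or \thmref{cap derivative}) one has $|d_n^{n+k}|\asymp|\lambda_*|^{2k}$, and $\sigma^k/|\lambda_*|^{2k}=(1+\delta/|\lambda_*|^2)^k\to\infty$; so the coarse contraction bound does \emph{not} give uniform boundedness of $T_k$. To make that route work you would need the sharper telescoping estimate $|\check\phi_{n+j}(x)|\le |d_{n+j}|\,|x|\,(1+C|x|)$ and the summability of the quadratic corrections. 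Relatedly, the claim that a subsequential limit $L$ of $T_k$ satisfies $L=\lambda_*^{-2}L\circ\check\phi_*$ is not immediate from $T_k=d_{n+k}^{-1}T_{k-1}\circ\check\phi_{n+k}$, since convergence of $T_{k_j}$ along a subsequence does not force $T_{k_j-1}$ to converge to the same limit.

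Fortunately neither issue is load-bearing: your error recursion for $\tilde T_k = T_k - u_*$ is self-sufficient and supersedes the normal-families step. Since $\tilde T_{k-1}$ vanishes to order two at $0$, the Schwarz lemma gives $|\tilde T_{k-1}(\check\phi_{n+k}(x))|\le\sigma^2\|\tilde T_{k-1}\|_{\bbD_r}$, and combined with $|d_{n+k}|^{-1}=|\lambda_*|^{-2}(1+o(1))$ this yields a contraction factor $|\lambda_*|^2+O(\delta)+o(1)<1$ plus a defect of size $O(\rho^{n+k})$. Starting the induction at any fixed $k_0$ (where $\|\tilde T_{k_0}\|_{\bbD_r}$ is trivially finite, $T_{k_0}$ being a single analytic map on $\bbD_r$), this already delivers uniform boundedness \emph{and} exponential decay of $\|\tilde T_k\|_{\bbD_r}$. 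So the proof is correct once the redundant and mis-estimated normal-families paragraph is removed or repaired.
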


\subsection{Universality}

In \cite{Yan1}, the second author showed that the sequence of maps $$\bfR^n(F_a) = \Sigma_n = (A_n, B_n)$$ takes on a {\it universal} two-dimensional shape as it flattens and converges to the degenerate one-dimensional renormalization limit given in \thmref{convergence}:

\begin{thm}\label{universalityb}
For $n \geq 0$, we have
$$
B_n(x,y)
=\begin{bmatrix}
\xi_n(x) - a^{q_{2n}} \, \beta(x) \, y \, (1+O(\rho^n))\\
x
\end{bmatrix},
$$
where $0 < \rho <1$ is a uniform constant; $\{q_n\}_{n=0}^\infty$ is the Fibonacci sequence; and $\beta(x)$ is a universal function that is uniformly bounded away from $0$ and $\infty$, and has a uniformly bounded derivative and distortion.
\end{thm}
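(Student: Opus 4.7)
The plan is to exploit the fact that $B_n$ is, up to a non-linear change of coordinates, a pure iterate of the H\'enon map $F_a$, whose Jacobian is constantly $a$. By \lemref{renorm iterate}, $B_n = \Psi_n^{-1}\circ F_a^{q_{2n}}\circ \Psi_n$ with $\Psi_n:=\Phi_0\circ\Phi_0^n$. Since $\Jac F_a\equiv a$, the chain rule yields
$$\Jac B_n(z)=a^{q_{2n}}\cdot\frac{\Jac\Psi_n(z)}{\Jac\Psi_n(B_n(z))}.$$
On the other hand, the H\'enon-like normal form $B_n(x,y)=(b_n(x,y),x)$ forces $\Jac B_n=-\partial_y b_n$. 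Combining these two identities and integrating in $y$ from $0$, using $b_n(x,0)=\xi_n(x)$, gives the exact formula
$$b_n(x,y)=\xi_n(x)-a^{q_{2n}}\int_0^y\frac{\Jac\Psi_n(x,t)}{\Jac\Psi_n(B_n(x,t))}\,dt.$$
The theorem then reduces to two claims: \textbf{(i)} the integrand is nearly independent of $t$, with multiplicative error of size $O(\rho^n)$ on the relevant domain; and \textbf{(ii)} its value at $t=0$ converges, as $n\to\infty$, to a universal function $\beta(x)$ with the stated regularity properties.

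For claim (i), I would exploit the factorization $\Phi_k=H_k\circ\Lambda_k$: the rescaling $\Lambda_k$ is affine, so only $H_k(u,y)=((a_{k-1})_y^{-1}(u),y)$ can contribute $y$-dependence, and it enters through the inverse of $a_{k-1}(\cdot,y)$. \lemref{degeneration} bounds $\|\partial_y a_{k-1}\|<C\bar\epsilon^{2^{k-1}}$, so each $H_k$ contributes only super-exponentially small $y$-sensitivity. Differentiating $\Jac\Psi_n$ in $y$ and telescoping over the composition, then rescaling by the dominant contribution at the $n$th level, yields a genuine $O(\rho^n)$ relative error in the integrand.

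For claim (ii), I would invoke \propref{limit phi x} and \propref{limit micro}: the normalized microscope maps $(d_n^{n+k})^{-1}\check\phi_n^{n+k}$ converge exponentially fast to the universal linearizer $u_*$ of $\check\phi_*$ at $0$, and $\phi_n\to\phi_*$ at exponential rate. Since the ratio $\Jac\Psi_n(x,0)/\Jac\Psi_n(\xi_n(x),x)$ is scale-invariant, it is insensitive to the overall contraction factor and depends only on the \emph{shape} of the microscope composition; this shape converges at exponential rate to a universal object determined by $u_*$, $\phi_*$ and $\zeta_*$. The resulting function $\beta(x)$ inherits univalence, uniform upper and lower bounds, bounded derivative and bounded distortion from standard Koebe-type estimates applied to the limiting univalent maps on a fixed universal domain.

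The main obstacle will be the quantitative estimate in (i). A na\"ive bound on the $y$-sensitivity of $\Psi_n$ picks up one multiplicative factor per level of composition, and one might worry that this loses the super-exponential decay from \lemref{degeneration}. The crux is to arrange the telescoping so that the super-exponentially small bounds $C\bar\epsilon^{2^{k-1}}$ absorb any polynomial-in-$n$ losses incurred along the way, producing a true $O(\rho^n)$ rate for some $\rho<1$; this is the step where the precise quantitative form of the renormalization estimates from \cite{Yan1} is indispensable.
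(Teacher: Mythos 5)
The paper does not prove this theorem itself; it is quoted verbatim from \cite{Yan1} (the section begins ``See \cite{Yan1} for complete details''). Your argument---conjugating to the $q_{2n}$th H\'enon iterate via \lemref{renorm iterate}, using $\Jac F_a\equiv a$ together with $\Jac B_n=-\partial_y b_n$ to obtain the exact Jacobian-ratio identity, and then integrating in $y$ from $b_n(x,0)=\xi_n(x)$---is precisely the mechanism by which universality of the linear coupling term is established in \cite{dCLyMa} and carried over to the semi-Siegel setting in \cite{Yan1}; your two remaining claims (near $t$-independence via the super-exponential $y$-degeneration of \lemref{degeneration}, and exponential convergence of the $t=0$ ratio to a universal $\beta$ via \propref{limit phi x}--\ref{limit micro}) match the structure of that proof, so this is correct and essentially the same route as the cited source.
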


\begin{thm}\label{universalitya}
For $n \geq 0$, we have
$$
A_n(x,y) =
\begin{bmatrix}
\eta_n(x) - a^{q_{2n}} \, \alpha(x) \, y \, (1+O(\rho^n))\\
\lambda_n^{-1} \tilde \eta_{n-1}(\lambda_n x) - a^{q_{2n}} \, \tilde{\alpha}(x) \, y \, (1+O(\rho^n))
\end{bmatrix},
$$
where $0 < \rho <1$ is a uniform constant; $\{q_n\}_{n=0}^\infty$ is the Fibonacci sequence; and
$$
\tilde \eta_k(x) = \eta_k(x) + O(a^{q_{2k}})
\hspace{3mm} , \hspace{3mm}
\alpha(x) := \frac{\eta_*'(x)}{\xi_*'(x)}\beta(x)
\hspace{3mm} \text{and} \hspace{3mm}
\tilde{\alpha}(x) := \frac{\eta_*'(\lambda_* x)}{\eta_*'(x)} \alpha(x).
$$
\end{thm}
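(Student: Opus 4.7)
The proof proceeds by induction on $n$, taking Theorem~\ref{universalityb} as input for $B_n$. Applying the defining recursion $A_{n+1} = \Phi_{n+1}^{-1} \circ B_n \circ A_n^2 \circ \Phi_{n+1}$ reduces the task to a direct (if technical) computation. Writing $(X, Y) = \Phi_{n+1}(x, y)$, so that $Y = \lambda_{n+1} y$ and $X$ is implicitly defined by $a_n(X, Y) = \lambda_{n+1} x + c_{n+1}$, the inductive hypothesis on $A_n$ gives $A_n(X, Y) = (\lambda_{n+1} x + c_{n+1}, h_n(X, Y))$, and hence
$$
A_n^2(X, Y) = \bigl(a_n(\lambda_{n+1} x + c_{n+1},\, h_n(X, Y)),\ h_n(\lambda_{n+1} x + c_{n+1},\, h_n(X, Y))\bigr) =: (u', v'),
$$
so $A_{n+1}(x, y) = \Phi_{n+1}^{-1}(b_n(u', v'), u')$. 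Setting $y = 0$ gives $X|_{Y = 0} = \eta_n^{-1}(\lambda_{n+1} x + c_{n+1})$, and then $h_n(X, Y)|_{y = 0} = \lambda_n^{-1} \tilde\eta_{n-1}(\lambda_n X|_{Y = 0})$. Substituting these into the formulas for $u'$ and $v'$ at $y = 0$ and invoking the one-dimensional renormalization identities \eqref{eq:1d limit} together with Theorem~\ref{convergence}, both components of $A_{n+1}(x, 0)$ match the required horizontal profiles; in particular the second component equals $\lambda_{n+1}^{-1} \tilde\eta_n(\lambda_{n+1} x)$ (which implicitly characterizes $\tilde\eta_n$), and the bound $\tilde\eta_n = \eta_n + O(a^{q_{2n}})$ comes from the size of the $y$-wobble of $h_n(X, Y)$ inside the composition.

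For the $y$-coefficients, the first component of $A_{n+1}$ is handled directly: the $-a^{q_{2n}}\beta(\cdot)$ factor from $b_n$ (known by Theorem~\ref{universalityb}) dominates, and a careful accounting of the surrounding compositions together with the identity $\alpha(x) = \eta_*'(x)\beta(x)/\xi_*'(x)$ and the exponential convergence $\zeta_n \to \zeta_*$ produces the claimed $\eta_{n+1}(x) - a^{q_{2n+2}} \alpha(x) y(1 + O(\rho^{n+1}))$. The subtler issue is the second component, which equals $u'/\lambda_{n+1}$. A naive chain-rule calculation for $\partial_y u'$ exhibits a leading-order cancellation at scale $a^{q_{2n}}$, forced by $\tilde\alpha(x) = \alpha(x) \eta_*'(\lambda_* x)/\eta_*'(x)$ together with Theorem~\ref{convergence}; however, that cancellation leaves only an $O(a^{2 q_{2n}}\rho^n)$ residue, which is \emph{weaker} than the required $O(a^{q_{2n+2}}) = O(a^{q_{2n} + q_{2n+1}})$ because $\rho^n$ decays only exponentially while $a^{q_{2n-1}}$ decays super-exponentially.

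The main obstacle is bridging this gap. The plan is to exploit Jacobian constancy: since $\Jac F_a \equiv a$, we have $\Jac A_{n+1} = a^{q_{2n+3}} \cdot J_{n+1}$ for a bounded analytic $J_{n+1}$. Combined with the already-established form of the first component of $A_{n+1}$ (giving $\partial_x a_{n+1}$ and $\partial_y a_{n+1}$) and with $\partial_x h_{n+1} \approx \tilde\eta_n'(\lambda_{n+1} x)$ from the zero-slice analysis above, the identity
$$
\partial_x a_{n+1}\,\partial_y h_{n+1} - \partial_y a_{n+1}\,\partial_x h_{n+1} = \Jac A_{n+1}
$$
solves uniquely for $\partial_y h_{n+1} = -a^{q_{2n+2}} \tilde\alpha(x)(1 + O(\rho^{n+1}))$, since the $O(a^{q_{2n+3}})$ Jacobian term is absorbed into the $O(\rho^{n+1})$ slack. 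This closes the induction and yields the claimed universal form of $A_n$.
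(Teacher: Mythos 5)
First, a framing point: the paper does not contain a proof of Theorem~\ref{universalitya}. It is stated in the ``Universality'' subsection of \secref{sec:prelim}, which is explicitly a summary of \cite{Yan1} (``See \cite{Yan1} for complete details''), and the theorem is attributed there to that reference. So there is no ``paper's own proof'' to compare against; I can only assess your proposal on its merits.

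Your diagnosis of the central difficulty is correct and shows genuine insight: a naive chain-rule computation of the $y$-coefficients produces, after the universal cancellation forced by $\tilde\alpha = \alpha\,\eta_*'(\lambda_*\cdot)/\eta_*'(\cdot)$, a residue of size $O(a^{2q_{2n}}\rho^n)$, which is far too large compared with the target $O(a^{q_{2n+2}}\rho^{n+1}) = O(a^{2q_{2n}} \cdot a^{q_{2n-1}}\rho^{n+1})$, since $\rho^n$ decays only exponentially. And the idea to invoke the constancy of $\Jac F_a$ to supply the missing super-exponential cancellation is the right kind of idea. However, there is a gap in how you deploy it. You propose to establish the first component's coefficient $\partial_y a_{n+1}(x,0) = -a^{q_{2n+2}}\alpha(x)(1+O(\rho^{n+1}))$ ``directly,'' claiming the $-a^{q_{2n}}\beta$ factor from $\partial_y b_n$ dominates, and then to solve the Jacobian identity at level $n+1$ for $\partial_y h_{n+1}$. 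But the dominant contribution to $\partial_y a_{n+1}(x,0)$ is \emph{not} the $\partial_y b_n$ term; writing $u' = a_n(\lambda_{n+1}x+c_{n+1}, h_n(X,Y))$, the chain rule gives
$$
\partial_y a_{n+1}(x,0) = \lambda_{n+1}^{-1}\,\partial_x a_n(\cdot)\,\partial_x b_n(\cdot)\,\partial_y u'(x,0) + \text{(lower order)},
$$
and $\partial_y u'(x,0)$ is precisely the quantity in which the problematic $a^{2q_{2n}}\rho^n$ residue appears:
$$
\partial_y u'(x,0) = \partial_y a_n(w, z)\cdot\lambda_{n+1}a^{q_{2n}}\left[\frac{\tilde\eta_{n-1}'(\lambda_n X)\,\alpha_n(X)}{\eta_n'(X)} - \tilde\alpha_n(X)\right],
$$
where $\alpha_n, \tilde\alpha_n$ are the (a priori $n$-dependent) $y$-coefficients of $A_n$. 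Exponential convergence $\zeta_n \to \zeta_*$ only gives this bracket as $O(\rho^n)$, so the same wall you identified for $\partial_y h_{n+1}$ blocks $\partial_y a_{n+1}$ as well. Thus the Jacobian at level $n+1$ cannot be used as you describe, because the ``already-established form of the first component'' is not, in fact, established; the argument as written is circular.

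The fix is to apply the Jacobian constraint one level earlier, \emph{inside} the computation of $\partial_y u'$. The identity $\Jac A_n(x,0) = a^{q_{2n+1}}J_n(x,0)$ (which holds automatically, not as an inductive assumption) forces
$$
\tilde\alpha_n(X) = \frac{\alpha_n(X)\,\tilde\eta_{n-1}'(\lambda_n X)}{\eta_n'(X)} + O(a^{q_{2n-1}}),
$$
an exact algebraic relation between the $n$-dependent coefficients, so the bracket above is genuinely $O(a^{q_{2n-1}})$ rather than merely $O(\rho^n)$. This yields $\partial_y u'(x,0) = O(a^{q_{2n}}\cdot a^{q_{2n}}\cdot a^{q_{2n-1}}) = O(a^{q_{2n+2}})$ with an identifiable leading coefficient, from which both $\partial_y a_{n+1}$ (via the prefactor $\partial_x a_n\,\partial_x b_n$) and $\partial_y h_{n+1} = \lambda_{n+1}^{-1}\partial_y u'$ follow together, and the remaining work is to match the limiting coefficients to $\alpha$ and $\tilde\alpha$ using \thmref{convergence}. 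In short: you found the right tool, but it must be invoked at level $n$ inside the chain-rule computation, not only at level $n+1$ after the fact, and the first component is no easier than the second.
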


Recall that we have
$$
\Phi_n(\kappa_n, 0) = (\kappa_{n-1}, 0)
\hspace{5mm} \text{and} \hspace{5mm}
\Phi_n^{n+k}(\kappa_{n+k}, 0) = (\kappa_n, 0).
$$
Denote
$$
D_n:= D_{(\kappa_n,0)}\Phi_n
\hspace{5mm} \text{and} \hspace{5mm}
D_n^{n+k} := D_{(\kappa_{n+k},0)}\Phi_n^{n+k} = D_{n+1} \cdot D_{n+2} \cdot \ldots{} \cdot D_{n+k}.
$$

\begin{thm}\label{cap derivative}
Write
$$
D_n = \begin{bmatrix}
1 & t_na^{q_{2(n-1)}} \\
0 & 1
\end{bmatrix}
\begin{bmatrix}
u_n & 0 \\
0 & \lambda_n
\end{bmatrix}.
$$
Then there exist a uniform positive constant $\rho <1$ such that the following estimates hold for all $n \geq 1$:
\begin{enumerate}[(i)]
\item $u_n = \lambda_*^2(1+O(\rho^n))$,
\item $\lambda_n = \lambda_*(1+O(\rho^n))$, and
\item $t_n = \lambda_*\alpha(1)(1+O(\rho^n))$.
\end{enumerate}
Consequently, for $0 \leq n, k$, we have
$$
D_n^{n+k} =  \begin{bmatrix}
1 & t_n^{n+k}a^{q_{2n}}\\
0 & 1
\end{bmatrix}
\begin{bmatrix}
u_n^{n+k} & 0 \\
0 & \lambda_n^{n+k}
\end{bmatrix},
$$
where
\begin{enumerate}[(i)]
\item $u_n^{n+k} := u_{n+1} \cdot u_{n+1} \cdot \ldots \cdot u_{n+k} = \lambda_*^{2k}(1+O(\rho^n))$,
\item $\lambda_n^{n+k} := \lambda_{n+1} \cdot \lambda_{n+2} \cdot \ldots \cdot \lambda_{n+k} = \lambda_*^k(1+O(\rho^n))$, and
\item $t_n^{n+k} = t_{n+1}(1 +O(a^{q_{2n+1}}))$.
\end{enumerate}
\end{thm}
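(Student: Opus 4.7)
The plan is to compute $D_n$ directly from the definition $\Phi_n = H_n \circ \Lambda_n$, substitute the universal form of $a_{n-1}$ provided by Theorem \ref{universalitya}, and then pass to the limit using Theorem \ref{convergence} and Proposition \ref{limit phi x}. The product estimate for $D_n^{n+k}$ will follow by expanding a product of upper-triangular matrices and invoking the Fibonacci identity $q_{2m+2} - q_{2m} = q_{2m+1}$.

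First I would unpack $H_n(x,y) = ((a_{n-1})_y^{-1}(x), y)$. By Theorem \ref{universalitya},
$$(a_{n-1})_y(x) = \eta_{n-1}(x) - a^{q_{2(n-1)}}\alpha(x) y (1 + O(\rho^{n-1})),$$
so implicit differentiation of $(a_{n-1})_y((a_{n-1})_y^{-1}(x)) = x$ in $y$, evaluated at $y=0$, gives
$$\partial_y\bigl((a_{n-1})_y^{-1}(x)\bigr)\Big|_{y=0} = \frac{a^{q_{2(n-1)}}\alpha(\eta_{n-1}^{-1}(x))}{\eta_{n-1}'(\eta_{n-1}^{-1}(x))}(1 + O(\rho^{n-1})).$$
Applying the chain rule to $\Phi_n = H_n \circ \Lambda_n$ at $(\kappa_n, 0)$, and using the identity $\lambda_n \kappa_n + c_n = \eta_{n-1}(\kappa_{n-1})$ that comes from $\Phi_n(\kappa_n, 0) = (\kappa_{n-1}, 0)$, produces the upper-triangular expression
$$D_n = \begin{bmatrix} \lambda_n/\eta_{n-1}'(\kappa_{n-1}) & \lambda_n a^{q_{2(n-1)}}\alpha(\kappa_{n-1})(1 + O(\rho^{n-1}))/\eta_{n-1}'(\kappa_{n-1}) \\ 0 & \lambda_n \end{bmatrix}.$$
Identifying this with the factored form in the theorem statement reads off $u_n = \lambda_n/\eta_{n-1}'(\kappa_{n-1})$ and $t_n = \alpha(\kappa_{n-1})(1+O(\rho^{n-1}))/\eta_{n-1}'(\kappa_{n-1})$.

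To pass to the limit, I combine $\eta_n \to \eta_*$ and $\lambda_n \to \lambda_*$ from Theorem \ref{convergence}(i)(ii) with $\kappa_n \to 1$ from Proposition \ref{limit phi x}(ii), all exponentially fast, together with the identity $\eta_*'(1) = 1/\lambda_*$. The latter is forced by $\phi_*(x) = \eta_*^{-1}(\lambda_* x)$ (Theorem \ref{convergence}(iii)) combined with $\phi_*(1) = 1$ and $\phi_*'(1) = \lambda_*^2$ from the proposition preceding Proposition \ref{limit phi x}: evaluation gives $\eta_*(1) = \lambda_*$, and differentiation at $x=1$ yields $\lambda_*^2 = \lambda_*/\eta_*'(1)$. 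Substitution then produces $u_n \to \lambda_*^2$, $\lambda_n \to \lambda_*$, and $t_n \to \lambda_*\alpha(1)$ at exponential rate, giving (i)--(iii).

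For the product $D_n^{n+k} = D_{n+1}\cdots D_{n+k}$, the diagonal entries telescope to $u_n^{n+k} = \lambda_*^{2k}(1+O(\rho^n))$ and $\lambda_n^{n+k} = \lambda_*^k(1+O(\rho^n))$ by a geometric-series estimate on the individual errors. An inductive expansion of the product of upper-triangular matrices yields
$$t_n^{n+k} = t_{n+1} + \sum_{j=2}^{k}\frac{u_{n+1}\cdots u_{n+j-1}}{\lambda_{n+1}\cdots\lambda_{n+j-1}}\, t_{n+j}\, a^{q_{2(n+j-1)} - q_{2n}},$$
and the key arithmetic observation is that the Fibonacci identity gives $q_{2(n+j-1)} - q_{2n} \geq q_{2n+1}$ for every $j \geq 2$, while each ratio of products is bounded by $C|\lambda_*|^{j-1}$ with $|\lambda_*| < 1$. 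This isolates the $j=1$ contribution $t_{n+1}$ as the main term and bounds the tail by $O(a^{q_{2n+1}})$, giving $t_n^{n+k} = t_{n+1}(1 + O(a^{q_{2n+1}}))$ (the multiplicative form uses that $t_{n+1} \to \lambda_*\alpha(1) \neq 0$, where nonvanishing follows from $\beta$ being bounded away from $0$, $\eta_*'(1) = 1/\lambda_* \neq 0$, and the simplicity of the unique critical point of $\xi_*$ near $0$ forcing $\xi_*'(1) \neq 0$). The main obstacle is careful bookkeeping rather than any single estimate: one must propagate the universality remainder $O(\rho^n)$ through the implicit-function computation, the chain rule, and the matrix product, while keeping the combinatorial error $O(a^{q_{2n+1}})$ in the shear of $D_n^{n+k}$ cleanly separated from the dynamical error $O(\rho^n)$ in its diagonal entries.
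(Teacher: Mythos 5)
Your proof is correct. Note that the paper does not actually prove Theorem~\ref{cap derivative}: it appears in the Preliminaries (\secref{sec:prelim}), which is explicitly a summary of \cite{Yan1}, so there is no proof in this paper to compare against. Your argument is the natural direct computation one would expect: unpacking $D_n$ via the chain rule on $\Phi_n = H_n\circ\Lambda_n$, substituting the universal form of $a_{n-1}$ from \thmref{universalitya}, using the fold identity $\eta_{n-1}^{-1}(\lambda_n\kappa_n + c_n)=\kappa_{n-1}$, and passing to the limit with $\eta_*'(1) = 1/\lambda_*$ (which you correctly derive from $\phi_*(1)=1$, $\phi_*'(1)=\lambda_*^2$, and $\phi_* = \eta_*^{-1}(\lambda_*\,\cdot\,)$). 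The product estimate via the telescoping expansion of $t_n^{n+k}$ and the Fibonacci identity $q_{2n+2}-q_{2n}=q_{2n+1}$, together with the geometric decay of $u_{n+1}\cdots u_{n+j-1}/(\lambda_{n+1}\cdots\lambda_{n+j-1})\asymp|\lambda_*|^{j-1}$, cleanly isolates the $j=1$ term and bounds the tail by $O(a^{q_{2n+1}})$. The nonvanishing check for $\lambda_*\alpha(1)$ is also handled correctly. No gaps.
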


\section{The Underlying One-Dimensional Dynamics}\label{sec:1d}

Consider the map $\xi_n: W \to \xi_n(W)$ given in \eqref{eq:projections}. By \thmref{convergence}, $\xi_n$ converges exponentially fast to $\xi_*$, where $\zeta_* = (\eta_*, \xi_*)$ is the fixed point of the one-dimensional renormalization operator $\cR$.

\begin{prop}\label{fixed1d}
The map $\xi_n$ has a unique repelling fixed point $\check{x}_n \in W$. Moreover, $\check{x}_n$ converges exponentially fast to the unique repelling fixed point $x_* \in W$ for $\xi_*$.
\end{prop}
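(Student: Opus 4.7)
The plan is to first establish existence, uniqueness, and the repelling property of the fixed point $x_*$ for the limit map $\xi_*$ on $W$, and then transfer these properties to $\xi_n$ by a perturbation argument using the exponential convergence $\xi_n \to \xi_*$ from \thmref{convergence}.

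For the limit map, the key observation is that $\xi_*$ is the one-dimensional golden-mean Siegel renormalization fixed point of \cite{GaYam}, satisfying the functional equation
$$
\xi_*(x) = \lambda_*^{-1}\,\eta_* \circ \xi_*(\lambda_* x)
$$
from \eqref{eq:1d limit}, with normalization $\xi_*(0) = 1$ and $\xi_*'(0) = 0$. From this renormalization structure, $\xi_*$ is a unimodal-like analytic map on $W$ whose dynamics is expanding away from the unique simple critical point at $0$. Existence of a fixed point $x_* \in W$ follows from a Brouwer-type argument (or the intermediate value theorem along a real slice), and its repelling property $|\xi_*'(x_*)| > 1$ is inherited from the expansion estimate on the complement of a small neighborhood of $0$. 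Uniqueness of $x_*$ in $W$ follows from the same expanding structure.

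For the perturbation step, since $\xi_*'(x_*) \neq 1$, the equation $\xi_*(x) - x = 0$ has a simple zero at $x_*$. Because $\xi_n \to \xi_*$ exponentially fast in $C^1(W)$ by \thmref{convergence}, the implicit function theorem yields a unique solution $\check{x}_n$ to $\xi_n(x) = x$ in a fixed small disk around $x_*$ for all $n$ large, with
$$
|\check{x}_n - x_*| \;\leq\; C \, \|\xi_n - \xi_*\|_{C^0(W)} \;=\; O(\rho^n)
$$
for some $\rho \in (0,1)$. By continuity of the derivative, $|\xi_n'(\check{x}_n)| > 1$ for $n$ large, so $\check{x}_n$ is repelling. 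Global uniqueness of $\check{x}_n$ in $W$ follows from Hurwitz's theorem applied to the sequence $\xi_n(x) - x$: any second fixed point sequence of $\xi_n$ in $W$ would accumulate, along a subsequence, to a second fixed point of $\xi_*$, contradicting uniqueness of $x_*$.

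The main obstacle is the analysis of the limit map $\xi_*$ itself: establishing that its fixed point in $W$ is both unique and repelling requires invoking the detailed structure of the one-dimensional golden-mean Siegel renormalization fixed point from \cite{GaYam}. Once this is granted, the transfer to $\xi_n$ is a routine perturbation argument driven by \thmref{convergence}.
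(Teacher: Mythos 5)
Your overall strategy---analyze the limit map $\xi_*$ first and then transfer to $\xi_n$ by a perturbation argument---is genuinely different from the paper's, and it leaves a real gap precisely where you flag ``the main obstacle.'' The reasoning you offer for $\xi_*$ (a ``unimodal-like'' expanding map, a Brouwer-type argument, the intermediate value theorem along a real slice, uniqueness ``from the same expanding structure'') is not a proof. The map $\xi_*$ is a holomorphic map on a complex domain $W \subset \bbC$, not a real unimodal map; there is no real slice or IVT available without first establishing and using a real symmetry, and ``expansion away from the critical point'' does not by itself produce a fixed point in $W$, force any fixed point to lie in the expanding region, or rule out several fixed points. So the existence, uniqueness, and repelling property for the limit map are claimed rather than established, and everything downstream depends on them.

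The paper sidesteps the issue entirely and works with each $\xi_n$ directly. It cites Proposition 3.2 of \cite{Yam} to say that $(\eta_n,\xi_n)$ forms a holomorphic pair in the sense of McMullen, with \emph{a priori} bounds; this gives an inverse branch of $\xi_n$ that is a holomorphic self-map of a disc with image compactly contained, so the Denjoy--Wolff theorem applies and yields a unique fixed point of $\xi_n^{-1}$, which is attracting for $\xi_n^{-1}$ and hence repelling for $\xi_n$. This establishes existence, uniqueness, and the repelling property for every $n$ (and for $\xi_*$, by the same argument) in one stroke, with no perturbation step. The exponential convergence $\check x_n \to x_*$ is then simply cited from Proposition 3.1 of \cite{Yam}. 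Your implicit-function-theorem plus Hurwitz transfer step is correct as far as it goes, and your derivation of the exponential rate from $C^1$ convergence is fine, but both are only needed because you tried to bootstrap from the limit; the Denjoy--Wolff route makes the perturbation argument unnecessary and, more importantly, actually fills the hole your sketch leaves open.
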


\begin{proof}
Proposition 3.2 of \cite{Yam} implies that $(\eta_n,\xi_n)$ forms a holomorphic pair in the sense of McMullen, with {\it a priori} bounds. By Denjoy-Wolff Theorem, $\xi_n$ has a unique fixed point in its domain of definition, which is attracting under $\xi_n^{-1}$, and hence is repelling under $\xi_n$. Finally, the exponential convergence statement follows by Proposition~3.1 of \cite{Yam}.
\end{proof}

Let $U_n$ be a small neighborhood of $\check{x}_n$, and denote:
$$
\xi_n^{\exp}:= \xi_n|_{U_n}.
$$
Then $\xi_n^{\exp}$ maps $U_n$ conformally onto $\xi_n^{\exp}(U_n) \Supset U_n$. We may choose a domain $\cU \subset W$ such that for all $n \in \bbN$ sufficiently large, the following hold:
\begin{enumerate}[i)]
\item $\check{x}_n \in \cU$;
\item $(\xi_n^{\exp})^{-1}$ extends to $\cU$; and
\item $\cU \setminus \overline{(\xi_n^{\exp})^{-1}(\cU)}$ is a topological annulus containing $0$.
\end{enumerate}
Note that $|(\xi_n^{\exp})'(x)| > C$ for $x\in (\xi_n^{\exp})^{-1}(\cU)$ for some uniform constant $C >0$.

Let $D_\delta(0)$ be a disc of sufficiently small radius $\delta > 0$ centered at $0$. Recall that $\xi_n : W \to \xi_n(W)$ has a unique simple critical point at $0$, and that $\xi_n(0) = 1$. It follows that $\xi_n$ has two distinct inverse branches on $D_\delta(0)$, one of which is $(\xi_n^{\exp})^{-1}$, and the other we denote by $(\xi_n^{\rot})^{-1}$. Note that we have
$$
(\xi_n^{\exp})^{-1}(D_\delta(0)) \subset (\xi_n^{\exp})^{-1}(\cU)
\hspace{5mm} \text{and} \hspace{5mm}
(\xi_n^{\rot})^{-1}(D_\delta(0)) \cap \cU = \varnothing.
$$
See \figref{fig:branches}.

\begin{rem}[See Prop.~3.1 of \cite{Yam}]\label{convergence of inverses}
As $n \to \infty$, the maps $(\xi_n^{\exp})^{-1}$ and $(\xi_n^{\rot})^{-1}$ converge exponentially fast to two distinct inverse branches $(\xi_*^{\exp})^{-1}$ and $(\xi_*^{\rot})^{-1}$ of $\xi_*$. 
\end{rem}

\begin{figure}[h]
\centering
\includegraphics[scale=0.3]{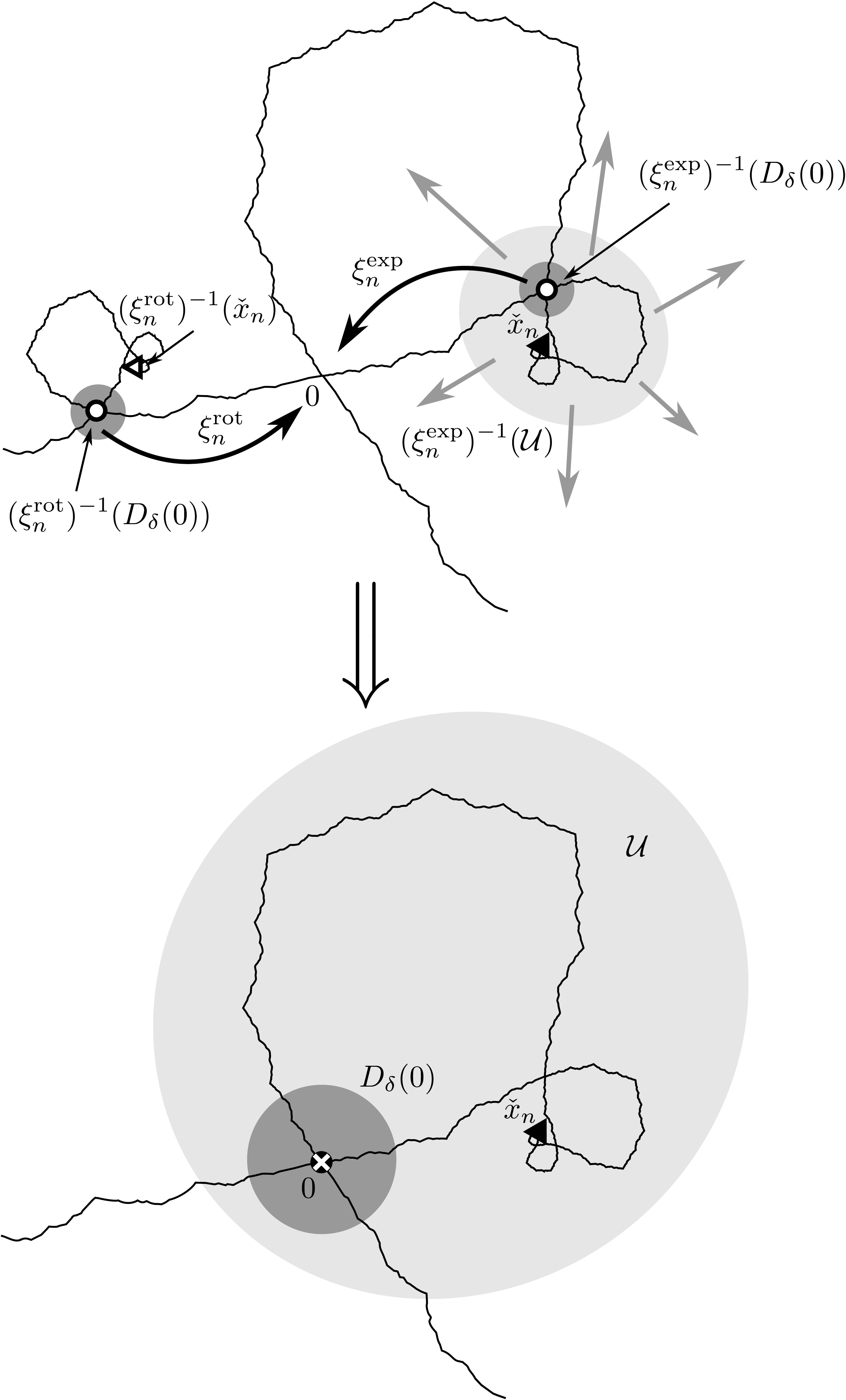}
\caption{A choice of the domain $\cU \subset W$ on which $\xi_n$ is uniformly expanding. The other branch $(\xi_n^{\rot})^{-1}$ maps the disc $D_\delta(0)$ outside of $\cU$.}
\label{fig:branches}
\end{figure}

\begin{thm}[\cite{Yan2}]\label{universalityaphi}
For $n \geq 0$, we have
$$
A_n \circ \Phi_{n+1}(x,y) =
\begin{bmatrix}
\lambda_{n+1}x+c_{n+1}\\
(\tilde{\xi}_n^{\rot})^{-1}(\lambda_{n+1}x) - a^{q_{2n+1}} \, \tilde \beta (x)\, \lambda_{n+1}\, y(1+O(\rho^n))
\end{bmatrix},
$$
where $0 < \rho <1$ is a uniform constant; $\{q_n\}_{n=0}^\infty$ is the Fibonacci sequence; and
$$
(\tilde \xi_n^{\rot})^{-1}(x) = (\xi_n^{\rot})^{-1}(x) + O(a^{q_{2(n-1)}})
\hspace{3mm} \text{and} \hspace{3mm}
\tilde \beta(x) := \frac{\xi_*'(\lambda_* x)}{\xi_*'(x)}\frac{\beta(x)}{\beta(\lambda_* x)}.
$$
\end{thm}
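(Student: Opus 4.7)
The first coordinate is forced exactly by the construction of $\Phi_{n+1}$. Since $H_{n+1}(z,w) = ((a_n)_w^{-1}(z), w)$, we have $a_n(H_{n+1}(z,w)) \equiv z$, and therefore $a_n(\Phi_{n+1}(x,y)) = \lambda_{n+1}x + c_{n+1}$ with no error at all. For the vertical component, set $Z := \lambda_{n+1}x + c_{n+1}$, $v := \lambda_{n+1}y$, $u := (a_n)_v^{-1}(Z)$, and $u_0 := \eta_n^{-1}(Z)$. Applying \thmref{universalitya} to $h_n$, together with the implicit function theorem expansion
\[ u = u_0 + \frac{\alpha(u_0)}{\eta_n'(u_0)}\,a^{q_{2n}}\lambda_{n+1}y\,(1+O(\rho^n)) + O((a^{q_{2n}}y)^2), \]
the second component of $A_n\circ\Phi_{n+1}(x,y)$ takes the form
\[ \lambda_n^{-1}\tilde\eta_{n-1}(\lambda_n u_0) + \left[\frac{\tilde\eta_{n-1}'(\lambda_n u_0)\,\alpha(u_0)}{\eta_n'(u_0)} - \tilde\alpha(u_0)\right] a^{q_{2n}}\lambda_{n+1}y\,(1+O(\rho^n)) + \cdots. \]

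For the $y$-independent piece, I would invoke the 1D renormalization fixed-point relation $\eta_*(\xi_*(\lambda_* x)) = \lambda_*\xi_*(x)$ from \eqref{eq:1d limit}. Rearranging this identity yields $\lambda_*^{-1}\eta_*(\lambda_*\eta_*^{-1}(\lambda_* x)) = (\xi_*^{\rot})^{-1}(\lambda_* x)$. The rotation branch of $\xi_*^{-1}$ is picked out because the combinatorics of the $q_{2n+1}$-iterate carried by $A_n\circ\Phi_{n+1}$ (see \lemref{renorm iterate}) places the image outside the neighborhood $\cU$ of the expanding fixed point $\check x_n$, which by construction is where $(\xi_*^{\exp})^{-1}$ fails to reach. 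Propagating the identity to finite $n$ using the exponential convergence rates of \thmref{convergence} and \remref{convergence of inverses}, then absorbing the perturbation $\tilde\eta_{n-1} - \eta_{n-1} = O(a^{q_{2(n-1)}})$ into the tilde, recovers $(\tilde\xi_n^{\rot})^{-1}(\lambda_{n+1}x)$ with the claimed error.

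For the $y$-coefficient, the bracket in the expansion vanishes identically at the 1D fixed point: by the relation $\tilde\alpha(x) = (\eta_*'(\lambda_*x)/\eta_*'(x))\alpha(x)$ from \thmref{universalitya}, the limiting value of $\tilde\eta_{n-1}'(\lambda_n u_0)\alpha(u_0)/\eta_n'(u_0)$ is exactly $\tilde\alpha(u_0)$, so the two contributions cancel precisely. This eliminates the leading $a^{q_{2n}}$ term, as it must, since $A_n \circ \Phi_{n+1}$ still represents a $q_{2n+1}$-iterate of $F_a$ and its vertical derivative is therefore controlled by the Jacobian scale $a^{q_{2n+1}}$. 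What survives comes from (i) the $O(\rho^n)$ deviations of $\eta_n, \tilde\eta_{n-1}, \lambda_n$ from their fixed-point limits, and (ii) the next order in the implicit-function expansion. Organizing these using the Fibonacci identity $q_{2n+1} = q_{2n}+q_{2n-1}$ and the formula $\alpha(x) = (\eta_*'(x)/\xi_*'(x))\beta(x)$ from \thmref{universalitya}, together with the fixed-point equation rewritten to convert $\eta_*$-derivatives into $\xi_*$-derivatives, the residual crystallizes into $-a^{q_{2n+1}}\tilde\beta(x)\lambda_{n+1}y(1+O(\rho^n))$ with the prescribed $\tilde\beta(x) = (\xi_*'(\lambda_* x)/\xi_*'(x))(\beta(x)/\beta(\lambda_* x))$.

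The main obstacle is precisely this cancellation step and the extraction of the $\tilde\beta$-coefficient: one must check that the vanishing at order $a^{q_{2n}}$ is exact, and then track the subleading terms from both the implicit function theorem and the renormalization convergence in a way that produces the clean closed-form $\tilde\beta$ without residual error outside the $(1+O(\rho^n))$ envelope. Everything else—the first coordinate, the identification of the rotation branch, and the absorption of $O(a^{q_{2(n-1)}})$ errors into the tildes—is essentially a direct consequence of the construction of $\Phi_{n+1}$ and the one-dimensional renormalization theory already built into \thmref{universalitya}, \thmref{convergence}, and \eqref{eq:1d limit}.
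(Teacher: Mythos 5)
The theorem is imported from \cite{Yan2}; the paper supplies no proof, so there is no in-paper argument to compare against. Evaluating your proposal on its own terms:

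Your first-coordinate claim is correct and, indeed, exact: since $H_{n+1}(z,w) = ((a_n)_w^{-1}(z), w)$, the first component of $A_n\circ\Phi_{n+1}(x,y)$ is
$a_n\bigl(H_{n+1}(\Lambda_{n+1}(x,y))\bigr) = \lambda_{n+1}x + c_{n+1}$ identically.

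The $y$-coefficient, however, contains a genuine gap. You obtain it by composing the expansion of \thmref{universalitya} with $\Phi_{n+1}$ and you correctly arrive at a bracket
\begin{equation*}
-a^{q_{2n}}\lambda_{n+1}\left[\tilde\alpha(u_0) - \frac{\tilde\eta_{n-1}'(\lambda_n u_0)\,\alpha(u_0)}{\eta_n'(u_0)}\right]\,y\,(1+O(\rho^n))
\end{equation*}
which vanishes at the 1D fixed point because $\tilde\alpha = (\eta_*'(\lambda_*\cdot)/\eta_*'(\cdot))\,\alpha$. But the problem is that for finite $n$ the bracket is only $O(\rho^n)$: all that \thmref{convergence} provides is exponential convergence of $\eta_n,\tilde\eta_{n-1},\lambda_n$ to their limits. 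So after the leading cancellation you are left with a residual of size $O(\rho^n\,a^{q_{2n}})$, which is \emph{enormously larger} than the target $a^{q_{2n+1}}(1+O(\rho^n))$, because $a^{q_{2n-1}} = a^{q_{2n+1}-q_{2n}}$ decays super-exponentially while $\rho^n$ decays only exponentially. Worse, the two $(1+O(\rho^n))$ factors attached to the $\alpha$-term and the $\tilde\alpha$-term in \thmref{universalitya} are not a priori identical, so there is no structural reason why they should cancel to sub-$\rho^n$ precision; they were never computed to that accuracy. You flag this cancellation as ``the main obstacle,'' but the issue is not bookkeeping: your input, \thmref{universalitya}, is simply too coarse to deliver the super-exponentially small $y$-coefficient you are after. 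Your Jacobian heuristic --- that $A_n\circ\Phi_{n+1}$ is conjugate to a $q_{2n+1}$-iterate of $F_a$, forcing the $a^{q_{2n}}$ level to cancel --- is the right intuition and confirms \emph{that} the cancellation happens, but it does not produce the explicit factor $\tilde\beta$ nor the $(1+O(\rho^n))$ error envelope. Getting those requires working directly from the exact dynamical representation of $A_n\circ\Phi_{n+1}$ as a rescaled $F_a$-iterate (cf.\ \lemref{renorm iterate}), tracking the Jacobian contribution along that orbit together with the universality of the microscope maps, rather than subtracting two separately approximated quantities. This is presumably why the statement is proved in \cite{Yan2} from first principles instead of being a formal consequence of \thmref{universalitya}.
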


\subsection{The degenerate map in dimension two}

Consider the degenerate map $\check{B}_n : \Gamma_n = W \times V_n \to \check{B}_n(\Gamma_n)$ defined by
$$
\check{B}_n(x,y) := (\xi_n(x), x).
$$
By \propref{fixed1d}, we see that the map $\check{B}_n$ has a unique saddle point 
$$
\check{\bfs}_n := (\check{x}_n, \check{x}_n) \in \Gamma_n \subset \bbC^2
$$
with one multiplier the same as that of $\check{x}_n$ for $\xi_n$, and the other equal to $0$. The {\it local stable manifold} of $\check{\bfs}_n$ can be identified with the vertical line segment through $\check{\bfs}_n$:
$$
\cM^s_{\loc}(\check{\bfs}_n) := \{(\check{x}_n, y) \, | \, y \in V_n\}.
$$
In other words, $\cM^s_{\loc}(\check{\bfs}_n)$ is the graph of the constant map
$$
y \mapsto \check{x}
\hspace{5mm} \text{for} \hspace{5mm}
y \in V_n.
$$
The {\it local unstable manifold} of $\check{\bfs}_n$ can be identified with the following embedding of $(\xi_n^{\exp})^{-1}(\cU) \subset \bbC$ into $\bbC^2$:
$$
\cM^u_{\loc}(\check{\bfs}_n) := \{(\xi_n(x), x) \, | \, x \in (\xi_n^{\exp})^{-1}(\cU)\}.
$$
Observe
$$
\check{B}_n(\cM^s_{\loc}(\check{\bfs}_n)) = \{\check{\bfs}_n\} \subset \cM^s_{\loc}(\check{\bfs}_n)
$$
and
$$
\check{B}_n(\cM^u_{\loc}(\check{\bfs}_n)) = \{(\xi_n(y),y) \, | \, y \in \cU\} \supset \cM^u_{\loc}(\check{\bfs}_n).
$$

For ease of reference, let us formulate the following:
\begin{lem}
The local unstable manifold $\cM^u_{\loc}(\check{\bfs}_n)$ is the graph of the map
$$
x \mapsto (\xi_n^{\exp})^{-1}(x)
\hspace{5mm} \text{for} \hspace{5mm}
x \in \cU.
$$
\end{lem}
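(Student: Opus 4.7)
The plan is to prove this by direct reparametrization. By the definition given just above,
\[
\cM^u_{\loc}(\check{\bfs}_n) = \{(\xi_n(x), x) \, : \, x \in (\xi_n^{\exp})^{-1}(\cU)\},
\]
so the task is simply to rewrite this set as the graph of a map of the first coordinate, parametrized over $\cU$.

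The key observation is that $\xi_n^{\exp} = \xi_n|_{U_n}$ is conformal onto $\xi_n^{\exp}(U_n) \Supset \cU$, so in particular $\xi_n$ restricted to $(\xi_n^{\exp})^{-1}(\cU) \subset U_n$ agrees with $\xi_n^{\exp}$ restricted to that same set and is therefore a bijection onto $\cU$. Thus I can introduce the new variable
\[
z := \xi_n(x) = \xi_n^{\exp}(x) \in \cU, \qquad x \in (\xi_n^{\exp})^{-1}(\cU),
\]
whose inverse is $x = (\xi_n^{\exp})^{-1}(z)$, defined on all of $\cU$ by the setup of the paragraph preceding \remref{convergence of inverses}.

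Substituting this reparametrization into the defining expression gives
\[
\cM^u_{\loc}(\check{\bfs}_n) = \{(z, (\xi_n^{\exp})^{-1}(z)) \, : \, z \in \cU\},
\]
which is exactly the graph of the map $x \mapsto (\xi_n^{\exp})^{-1}(x)$ over $\cU$, as claimed.

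This is really a tautological reformulation rather than a substantive argument; there is no genuine obstacle. The only point that needs to be checked carefully is the injectivity/surjectivity of $\xi_n$ on the relevant piece of its domain, and this is guaranteed by the choice of $\cU$ (property (ii)) together with the fact that $\xi_n$ has its only critical point at $0 \notin (\xi_n^{\exp})^{-1}(\cU)$, so that $\xi_n^{\exp}$ is genuinely a well-defined conformal inverse branch. I would present the argument in essentially the three displayed lines above.
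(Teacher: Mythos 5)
Your proof is correct and matches the paper's own treatment: the paper states this lemma ``for ease of reference'' with no proof at all, precisely because it is the tautological reparametrization $z = \xi_n^{\exp}(x)$ of the defining set $\{(\xi_n(x), x) : x \in (\xi_n^{\exp})^{-1}(\cU)\}$, exactly as you observe.
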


\begin{lem}
The degenerate map $\check{B}_n$ is injective on $\cM^u_{\loc}(\check{\bfs}_n)$. The image $\check{B}_n(\cM^u_{\loc}(\check{\bfs}_n))$ is the graph of the map
$$
y \mapsto \xi_n(y)
\hspace{5mm} \text{for} \hspace{5mm}
y \in \cU.
$$
Consequently, $(1,0)$ is the unique point in $\check{B}_n(\cM^u_{\loc}(\check{\bfs}_n))$ that has a vertical tangent. We call $(1,0)$ the \emph{vertical tangency} for $\check{B}_n$.
\end{lem}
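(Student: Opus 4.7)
The lemma packages three assertions about the map $\check{B}_n(X,Y)=(\xi_n(X),X)$ restricted to the local unstable manifold: (a) injectivity, (b) identification of the image as a graph over the vertical coordinate, and (c) existence and location of the unique vertical tangency. The plan is a direct unwinding of the definitions; the entire content rests on facts about $\xi_n$ already recorded in \secref{sec:prelim} and \propref{fixed1d}, together with the three defining properties of the domain $\cU$.

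First I would parametrize $\cM^u_{\loc}(\check{\bfs}_n)$ via the map $x\mapsto (\xi_n(x),x)$ with $x\in(\xi_n^{\exp})^{-1}(\cU)$, which is the form used in its definition. Plugging into $\check{B}_n$ gives
$$
\check{B}_n(\xi_n(x),x)=\bigl(\xi_n(\xi_n(x)),\,\xi_n(x)\bigr).
$$
Setting $y:=\xi_n(x)$ rewrites the image point as $(\xi_n(y),y)$. Since $x\in(\xi_n^{\exp})^{-1}(\cU)\subset U_n$ and $\xi_n^{\exp}=\xi_n|_{U_n}$ is a conformal bijection onto $\cU$, the parameter $y$ ranges over precisely $\cU$ as $x$ ranges over $(\xi_n^{\exp})^{-1}(\cU)$. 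This identifies $\check{B}_n(\cM^u_{\loc}(\check{\bfs}_n))$ as the graph of $y\mapsto\xi_n(y)$ over $y\in\cU$. For injectivity, equality of two images $(\xi_n(y_1),y_1)=(\xi_n(y_2),y_2)$ forces $y_1=y_2$ by looking at second coordinates, i.e.\ $\xi_n(x_1)=\xi_n(x_2)$; and since both $x_i$ lie in $U_n$ where $\xi_n^{\exp}$ is injective, we conclude $x_1=x_2$.

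For the vertical tangency claim, I would differentiate the graph parametrization $y\mapsto(\xi_n(y),y)$ to obtain the tangent vector $(\xi_n'(y),1)$. Following the convention stated in the introduction that the second coordinate is ``vertical'', a tangent line is vertical exactly when its horizontal component vanishes, i.e.\ when $\xi_n'(y)=0$. As recalled in \secref{sec:prelim}, the map $\xi_n$ has a unique simple critical point, located at $0$, with $\xi_n(0)=1$ by the renormalization normalization. The fact that $0\in\cU$, which is needed to ensure that this critical point actually parametrizes a point of the image, is precisely condition iii) in the choice of $\cU$: the annulus $\cU\setminus\overline{(\xi_n^{\exp})^{-1}(\cU)}$ contains $0$. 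Consequently $(\xi_n(0),0)=(1,0)$ is the unique vertical tangency, as claimed.

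There is essentially no obstacle of substance; this is a bookkeeping lemma. The one spot worth handling with care is keeping the two roles of the first coordinate straight — it is the ``dynamical'' variable under $\xi_n$ but the ``vertical'' (image) variable under the swap built into $\check{B}_n$ — which is why I would introduce the intermediate substitution $y=\xi_n(x)$ rather than computing everything in terms of $x$. Once this substitution is made, the graph structure of the image and the appearance of the critical point of $\xi_n$ as the unique vertical tangency are both immediate.
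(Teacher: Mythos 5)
Your proposal is correct and proceeds exactly as the paper does (the paper presents this lemma as an immediate observation, recording the identity $\check{B}_n(\cM^u_{\loc}(\check{\bfs}_n)) = \{(\xi_n(y),y) : y \in \cU\}$ in the preceding paragraph and giving no separate proof). One small imprecision worth flagging: you write that $(\xi_n^{\exp})^{-1}(\cU)\subset U_n$ and that $\xi_n^{\exp}=\xi_n|_{U_n}$ is a conformal bijection \emph{onto $\cU$}; neither is literally what the text says, since property~ii) of $\cU$ asserts that the inverse branch $(\xi_n^{\exp})^{-1}$ extends to $\cU$, so a priori $(\xi_n^{\exp})^{-1}(\cU)$ need not lie inside $U_n$ nor must $\xi_n^{\exp}(U_n)$ equal $\cU$. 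What you actually use — that $\xi_n$ restricted to $(\xi_n^{\exp})^{-1}(\cU)$ is a conformal bijection onto $\cU$ — is exactly right and is what the extension of the inverse branch guarantees, so the argument is unaffected.
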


\section{Local Stable and Unstable Manifolds}\label{sec:local manifolds}

\subsection{Saddle fixed points}

Consider the $n$th renormalization $\Sigma_n = (A_n, B_n)$ of the semi-Siegel H\'enon map $F_a$. By \thmref{universalityb}, the map $B_n : \Gamma_n = W \times V_n \to B_n(\Gamma_n)$ given by 
$$
B_n(x,y) = (b_n(x,y), x)
$$
is an $O(a^{q_{2n}})$-perturbation of the degenerate map $\check{B}_n$ defined in \secref{sec:1d}. Hence, $B_n$ has a unique saddle fixed point
$$
\bfs_n = (x_n, y_n) \in \Gamma_n \subset \bbC^2
$$
which is $O(a^{q_{2n}})$-close to $\check{\bfs}_n = (\check{x}_n, \check{x}_n)$.

Consider
$$
D_{\bfs_n}B_n = \begin{bmatrix}
\partial_x b_n(\bfs_n) & \partial_y b_n(\bfs_n)\\
1 & 0
\end{bmatrix}.
$$
The eigenvalues of this matrix are
$$
\mu_n \approx \xi_n'(\check x_n)
\hspace{5mm} \text{and} \hspace{5mm}
\nu_n = \partial_y b_n(\bfs_n) = O(a^{q_{2n}}).
$$
The eigendirections corresponding to $\mu_n$ and $\nu_n$ are given by the eigenvectors
\begin{equation}\label{eq:eigenvectors}
\vec u_n \approx \begin{bmatrix}
1\\
1/\xi_n'(\check x_n)
\end{bmatrix}
\hspace{5mm} \text{and} \hspace{5mm}
\vec v = \begin{bmatrix}
0\\
1
\end{bmatrix}.
\end{equation}
respectively.

Let $\cM^s(\bfs_n)$ and $\cM^u(\bfs_n)$ be the stable and unstable manifold of $\bfs_n$ respectively. The {\it local stable manifold} $\cM^s_{\loc}(\bfs_n)$ of $\bfs_n$ is defined as the connected component of $\cM^s(\bfs_n) \cap \Gamma_n$ that contains $\bfs_n$. The {\it local unstable manifold} $\cM^u_{\loc}(\bfs_n)$ of $\bfs_n$ is defined as the connected component of $\cM^u(\bfs_n) \cap (\cU \times V_n)$ containing $\bfs_n$, where $\cU \subset W$ is the domain chosen in \secref{sec:1d}. Observe that the eigenvectors $\vec v$ and $\vec u_n$ given in \eqref{eq:eigenvectors} are tangent to $\cM^s_{\loc}(\bfs_n)$ and $\cM^u_{\loc}(\bfs_n)$ respectively at $\bfs_n$.

\begin{prop}
The local stable manifold $\cM^s_{\loc}(\bfs_n)$ of $\bfs_n$ for $B_n$ is the graph of the map
$$
y \mapsto \psi_n(y)
\hspace{5mm} \text{for} \hspace{5mm}
y \in V_n,
$$
where $\psi_n$ is analytic and $O(a^{q_{2n}})$-close to the constant map $y \mapsto x_n$.
\end{prop}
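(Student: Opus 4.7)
The plan is to split the statement into the existence/analyticity of $\psi_n$ on the one hand and the quantitative estimate $\|\psi_n - x_n\|_{V_n} = O(a^{q_{2n}})$ on the other.

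For existence I would apply the analytic stable manifold theorem to $B_n$ at $\bfs_n$. The saddle is strongly dominated: $|\nu_n| = O(a^{q_{2n}})$ while $|\mu_n| \asymp |\xi_*'(x_*)| > 1$ by \propref{fixed1d}, and by \eqref{eq:eigenvectors} the stable eigenvector is purely vertical. Hence the local stable disk is tangent to $\vec v = (0,1)^T$ at $\bfs_n$ and, being analytic and transverse to the horizontal direction on a fixed neighborhood, is realised as the graph $\{(\psi_n(y), y) : y \in V_n\}$ of an analytic map $\psi_n : V_n \to W$.

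For the quantitative bound I would run a perturbation argument against the degenerate map $\check B_n$ of \secref{sec:1d}. Introduce the analytic homotopy
$$B_n^t(x,y) := (\xi_n(x) - t \cdot a^{q_{2n}} \beta(x)\, y\, (1+O(\rho^n)),\ x), \qquad t \in [0,1],$$
interpolating between $\check B_n$ (at $t = 0$) and $B_n$ (at $t = 1$). For each $t$ there is a unique saddle $\bfs_n^t$ depending analytically on $t$, whose local stable manifold is a graph $\psi_n^t$ that also depends analytically on $t$, by the classical analytic dependence of hyperbolic invariant manifolds on their defining map. At $t = 0$ the stable manifold of $\check B_n$ is precisely the vertical line $\{x = \check x_n\}$, so $\psi_n^0 \equiv \check x_n$. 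A Cauchy estimate in $t$ then yields $\|\psi_n^t - \check x_n\|_{V_n} = O(t\cdot a^{q_{2n}})$. Setting $t = 1$ and combining with $|x_n - \check x_n| = O(a^{q_{2n}})$ (which follows since $\bfs_n$ is itself an $O(a^{q_{2n}})$-perturbation of $\check{\bfs}_n$) gives the desired bound.

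The main technical difficulty is uniformity in $n$ of the implicit constants produced by the stable manifold theorem and its parameter version. To secure this, one has to marshall the a priori bounds from renormalization in \cite{Yan1}: uniform hyperbolicity of $\bfs_n$ (with $|\mu_n|$ bounded away from $1$ and from $\infty$, and $|\nu_n| = O(a^{q_{2n}})$), uniform geometry of the domains $W$ and $V_n$ in the renormalized coordinates, uniform bounds on $\beta$, and uniform control of the $O(\rho^n)$ corrections in \thmref{universalityb}. An alternative, more hands-on, route bypassing the parameter homotopy is a direct contraction argument: write $\psi = x_n + \delta$ and use the invariance equation $\psi(\psi(y)) = b_n(\psi(y), y)$ to obtain a fixed-point equation $\delta = T\delta$ whose linearization is governed by the contraction factor $|\xi_n'(x_n)|^{-1} < 1$; run the Banach fixed-point theorem on the ball of radius $Ca^{q_{2n}}$ in the sup norm on $V_n$, using $\partial_y b_n = O(a^{q_{2n}})$ to produce the inhomogeneous term of the right size. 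Either route lands on the same estimate.
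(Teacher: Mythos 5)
Your proposal is correct in substance but takes a genuinely different primary route from the paper. The paper runs a graph transform directly: it picks a small neighborhood $U_n$ of $x_n$ on which $(b_n)_y$ is uniformly expanding for all $y\in V_n$, shows the pull-back operator on analytic graphs $\psi:V_n\to U_n$ contracts the $C^0$ distance, and identifies its unique fixed point with $\cM^s_{\loc}(\bfs_n)$. The quantitative estimate is then obtained not by comparing to $\check B_n$ but by implicitly differentiating the invariance relation $b_n(\psi_n(y),y)=\psi_n(\psi_n(y))$, which gives
$$
\psi_n'(y)=\frac{\partial_y b_n}{\psi_n'-\partial_x b_n},
$$
and combining $\partial_y b_n=O(a^{q_{2n}})$, $|\partial_x b_n|>C>1$, and a Schwarz-type bound $|\psi_n'|<c<1$ (since $\psi_n$ maps the large $V_n$ into the small $U_n$) to conclude $\|\psi_n'\|=O(a^{q_{2n}})$.

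Your primary route (analytic stable manifold theorem for existence, then a homotopy in $t$ from $\check B_n$ to $B_n$ plus analytic parameter-dependence and a Cauchy estimate) is a legitimate alternative, but two steps need more care. First, the local stable manifold theorem produces only a disk near $\bfs_n$ tangent to $\vec v$; realizing it as a graph over the \emph{entire} $V_n$ (as $\cM^s_{\loc}$ is defined in \secref{sec:local manifolds}) is precisely what the graph transform buys and cannot be waved through as "analytic and transverse to the horizontal." Second, the Cauchy estimate in $t$ is circular as stated: to bound $d\psi_n^t/dt$ by $O(a^{q_{2n}})$ you need $\psi_n^t$ to extend analytically to a $t$-disk of radius $\sim a^{-q_{2n}}$, which amounts to re-parameterizing by $s=t\,a^{q_{2n}}$ and Cauchy-estimating in $s$ on a disk of uniform size — one should say this. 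You also implicitly use that the interpolating family $B_n^t$ remains uniformly hyperbolic along the stable manifold for all $t$ (it does, since the expansion comes from $\xi_n$ alone, but it is worth noting because $\check B_n$ is not invertible).

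Your "alternative, more hands-on" route — a Banach fixed point argument on a ball of radius $Ca^{q_{2n}}$ around the constant graph, using $\partial_y b_n=O(a^{q_{2n}})$ for the inhomogeneous term — is essentially the paper's graph transform, just with existence and the quantitative bound bundled into one contraction. It would avoid both issues above and is the cleanest way to reproduce the result. One small remark on the statement itself: the paper's proof actually establishes the derivative bound $\|\psi_n'\|=O(a^{q_{2n}})$ rather than the pointwise bound $\|\psi_n-x_n\|=O(a^{q_{2n}})$ that you target; since $\psi_n(y_n)=x_n$, the two are essentially equivalent here, and the derivative form is what gets used downstream (e.g.\ in \propref{higher period stable}).
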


\begin{proof}
Denote
$$
(b_n)_y(x) := b_n(x,y).
$$
For $a$ sufficiently small, we have
$$
|(b_n)_y'(x_n)| \approx |\xi_n'(\check x_n)| >1
\hspace{5mm} \text{for} \hspace{5mm}
y \in V_n.
$$
Hence, we may choose a small neighborhood $U_n \subset \bbC$ of $x_n$, so that 
$$
|(b_n)_y'(x)| > C >1
\hspace{5mm} \text{for} \hspace{5mm}
x\in U_n
\hspace{2.5mm} , \hspace{2.5mm}
y \in V_n.
$$
Consequently, $(b_n)_y$ maps $U_n$ conformally onto $(b_n)_y(U_n) \Supset U_n$ for $y \in V_n$.

Let $\cG$ be the family of graphs such that if $G_\psi \in \cG$, then $G_\psi$ is the graph of the map
$$
y \mapsto \psi(y)
\hspace{5mm} \text{for} \hspace{5mm}
y \in V_n,
$$
where $\psi : V_n \to U_n$ is analytic. We claim that for every $G_\psi \in \cG$, there exists a unique graph $G_\phi \in \cG$ such that $B_n(G_\phi) \subset G_\psi$. Observe that for $y \in V_n$:
\begin{equation}\label{eq:graph condition}
B_n(x,y) \in G_\psi \iff (b_n)_y(x) = \psi(x).
\end{equation}
Since $(b_n)_y$ conformally maps $U_n$ onto a set that compactly contains $U_n$, while $\psi(V_n) \subset U_n$, there exists a unique value $x = \phi(y) \in U_n$ that satisfies \eqref{eq:graph condition}. The claim follows.

From the claim, we obtain the {\it graph transform} $\cT : \cG \to \cG$ defined by $\cT : \psi \mapsto \phi$. Since $(b_n)_y$ is expanding on $U_n$, we can show by the standard techniques that $\cT$ contracts the $C^0$-distance on $\cG$, and that its unique fixed point is $G_{\psi_n} = \cM^s_{\loc}(\bfs_n)$.

It remains to show that $\|\psi_n'\| = O(a^{q_{2n}})$. Letting $x = \psi_n(y)$ and $\psi = \psi_n$ in \eqref{eq:graph condition} and differentiating with respect to $y$, we obtain
$$
\psi_n'(y) = \frac{\partial_y b_n(x,y)}{\psi_n'(x) - \partial_x b_n(x,y)}.
$$
Since $\psi_n$ maps a large domain $V_n$ into a small domain $U_n$, we have
$$
|\psi_n'(x)| < c < 1 < C < |\partial_x b_n(x,y)|.
$$
The result follows.
\end{proof}

\begin{prop}\label{fixed unstable}
The local unstable manifold $\cM^u_{\loc}(\bfs_n)$ of $\bfs_n$ for $B_n$ is the graph of the map
$$
x \mapsto \chi_n(x)
\hspace{5mm} \text{for} \hspace{5mm}
x \in \cU,
$$
where $\chi_n$ is analytic and $O(a^{q_{2n}})$-close to $(\xi_n^{\exp})^{-1}$.
\end{prop}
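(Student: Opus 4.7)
The plan is to mirror the previous proposition's graph transform construction, now representing the unstable manifold as a graph over the horizontal variable $x \in \cU$ that is expanded by $B_n$. The guiding model is the degenerate map $\check B_n$, whose local unstable manifold is the graph of $(\xi_n^{\exp})^{-1}$; by \thmref{universalityb} we have $b_n(x,y) = \xi_n(x) - a^{q_{2n}} \beta(x) y (1+O(\rho^n))$, so $B_n$ is an $O(a^{q_{2n}})$-perturbation of $\check B_n$ with $\|\partial_y b_n\| = O(a^{q_{2n}})$. I work in the closed ball $\cG \subset H^\infty(\cU)$ of bounded analytic maps $\chi: \cU \to \bbC$ that are $O(a^{q_{2n}})$-close to $(\xi_n^{\exp})^{-1}$ in the sup norm, which is a complete metric space.

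Given $\chi \in \cG$, set $g_\chi(x) := b_n(x, \chi(x))$. Since $|g_\chi'(x) - \xi_n'(x)| = O(a^{q_{2n}})$, the map $g_\chi$ remains uniformly expanding on $(\xi_n^{\exp})^{-1}(\cU)$ with $|g_\chi'| \geq C > 1$ and its image covers $\cU$, so it admits a unique analytic inverse branch $g_\chi^{-1}: \cU \to (\xi_n^{\exp})^{-1}(\cU)$ close to $(\xi_n^{\exp})^{-1}$. Define the graph transform $\tilde\cT: \cG \to \cG$ by $\tilde\cT(\chi) := g_\chi^{-1}$; geometrically, the graph of $\tilde\cT(\chi)$ equals the portion of $B_n(\mathrm{graph}(\chi))$ lying over $\cU$. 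For $\chi_1, \chi_2 \in \cG$ and $u \in \cU$, set $x_i := g_{\chi_i}^{-1}(u)$; then
\[
g_{\chi_1}(x_1) - g_{\chi_1}(x_2) = b_n(x_2, \chi_2(x_2)) - b_n(x_2, \chi_1(x_2)),
\]
and combining $|g_{\chi_1}'| \geq C > 1$ with $\|\partial_y b_n\| = O(a^{q_{2n}})$ gives $|x_1 - x_2| \leq O(a^{q_{2n}}) \|\chi_1 - \chi_2\|_{C^0}$. The same estimate applied with $\chi_2 \equiv (\xi_n^{\exp})^{-1}$ shows $\tilde\cT$ preserves $\cG$, so Banach's fixed-point theorem yields a unique $\chi_n \in \cG$, which is analytic and $O(a^{q_{2n}})$-close to $(\xi_n^{\exp})^{-1}$.

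The fixed-point condition $g_{\chi_n}(\chi_n(u)) = u$ on $\cU$ is the $B_n$-invariance of $\mathrm{graph}(\chi_n)$, which is an analytic curve passing through $\bfs_n$ along which $B_n$ acts by the uniform expansion $g_{\chi_n}$; by standard uniqueness of the analytic local unstable manifold at a saddle, $\mathrm{graph}(\chi_n)$ coincides with $\cM^u_{\loc}(\bfs_n)$. The main subtlety is branch selection: since $\xi_n$ has a critical point at $0 \in \cU$, it has two inverse branches $(\xi_n^{\exp})^{-1}$ and $(\xi_n^{\rot})^{-1}$ on $D_\delta(0)$, and one must verify that perturbing by $O(a^{q_{2n}})$ preserves a single-valued extension of the expanding branch to all of $\cU$ and that iterates of $\tilde\cT$ never drift to the wrong branch. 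This is automatic inside $\cG$, where every function is uniformly close to $(\xi_n^{\exp})^{-1}$ and hence bounded away from $(\xi_n^{\rot})^{-1}$.
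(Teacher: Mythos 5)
Your proof is correct in substance but follows a genuinely different route from the paper. You construct $\chi_n$ from scratch as the Banach fixed point of the graph transform $\tilde\cT(\chi) = g_\chi^{-1}$, then identify the resulting invariant graph with $\cM^u_{\loc}(\bfs_n)$ by uniqueness of analytic local unstable manifolds. The paper instead takes the local germ of $\cM^u_{\loc}(\bfs_n)$ near $\bfs_n$ (a graph tangent to $\vec u_n$, guaranteed by the stable/unstable manifold theorem) and pushes it forward under $B_n$ finitely many times until it spreads over $\cU$; a single application of $B_n$ already turns any graph $\chi : U \to \cU$ into a graph that is $O(a^{q_{2n}})$-close to $(\xi_n^{\exp})^{-1}$, because by \thmref{universalityb} one has $g_\chi(x) - \xi_n(x) = -a^{q_{2n}}\beta(x)\chi(x)(1+O(\rho^n))$ with $\chi$ taking values in the fixed bounded set $\cU$, so the closeness is automatic regardless of $\chi$. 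The paper's route is shorter because it need not set up a complete metric space nor prove invariance; your route is more self-contained and does not rely on the tangent-direction normal form.

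One imprecision in your argument: the assertion that ``the same estimate applied with $\chi_2 \equiv (\xi_n^{\exp})^{-1}$ shows $\tilde\cT$ preserves $\cG$'' does not, on its own, give what is needed. Your displayed estimate controls $\|\tilde\cT(\chi_1) - \tilde\cT(\chi_2)\|$, so taking $\chi_2 = (\xi_n^{\exp})^{-1}$ only bounds $\|\tilde\cT(\chi) - \tilde\cT((\xi_n^{\exp})^{-1})\| = O(a^{2q_{2n}})$, not $\|\tilde\cT(\chi) - (\xi_n^{\exp})^{-1}\|$, since $(\xi_n^{\exp})^{-1}$ is not fixed by $\tilde\cT$. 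You must add the (easy, but separate) estimate that $\|\tilde\cT((\xi_n^{\exp})^{-1}) - (\xi_n^{\exp})^{-1}\| = O(a^{q_{2n}})$, which follows exactly from the universality formula for $b_n$ noted above, and then conclude by the triangle inequality. With that supplemented, the invariance of $\cG$ holds and Banach's fixed-point theorem applies as you intend.
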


\begin{proof}
Let $G_\chi$ be the graph of the map
$$
x \mapsto \chi(x)
\hspace{5mm} \text{for} \hspace{5mm}
x \in U,
$$
where $U$ is a domain in $(\xi_n^{\exp})^{-1}(\cU)$, and $\chi : U \to \cU$ is analytic. We claim that $B_n(G_\chi)$ is the graph of the map
$$
x \mapsto B_n^\chi(x)
\hspace{5mm} \text{for} \hspace{5mm}
x \in U',
$$
where $U' \approx \xi_n^{\exp}(U)$, and $B_n^\chi : U' \to \cU$ is analytic and $O(a^{q_{2n}})$-close to $(\xi_n^{\exp})^{-1}$. Observe
$$
B_n(x', y') = ((b_n)_{y'}(x'), x') = (x, (b_n)_{y'}^{-1}(x)).
$$
Since $(b_n)_{y'}$ is $O(a^{q_{2n}})$-close to $\xi_n^{\exp}$ on $(\xi_n^{\exp})^{-1}(\cU)$ for $y' \in V_n$, the claim follows. 

The eigenvector $\vec u_n$ given in \eqref{eq:eigenvectors} is tangent to $\cM^u_{\loc}(\bfs_n)$ at $\bfs_n$. Thus, there exists a small neighborhood $U_n \subset \bbC$ of $x_n$ such that near $\bfs_n$, the stable manifold $\cM^u_{\loc}(\bfs_n)$ is given by the graph of the map
$$
x \mapsto \chi_n(x)
\hspace{5mm} \text{for} \hspace{5mm}
x \in U_n,
$$
where $\chi_n$ is analytic, and $\chi_n'(x_n) \approx 1/\xi_n'(\check x_n)$. Since $U_n$ maps onto $\cU$ under a finite iterate of $\xi_n$, the result follows from the claim.
\end{proof}

\subsection{Saddle orbits of higher periods}

The phase space $\Omega_{n+k} \cup \Gamma_{n+k}$ for $$\Sigma_{n+k} = (A_{n+k}, B_{n+k})$$ can be identified as the subset $\Omega_n^{n+k} \cup \Gamma_n^{n+k} $ of the phase space $\Omega_n \cup \Gamma_n$ for $$\Sigma_n = (A_n, B_n)$$ via the $n$th microscope map $\Phi_n^{n+k}$ of depth $k$:
$$
\text{i.e.}\hspace{5mm}
\Phi_n^{n+k}(\Omega_{n+k} \cup \Gamma_{n+k})=\Omega_n^{n+k} \cup \Gamma_n^{n+k} \subset \Omega_n \cup \Gamma_n.
$$
Let $\bfs_{n+k} \in \Gamma_{n+k}$ be the unique saddle fixed point for $B_{n+k}$. Denote
$$
\bfs_n^{n+k} = (x_n^{n+k}, y_n^{n+k}):= \Phi_n^{n+k}(\bfs_{n+k}).
$$
Observe that $\bfs_n^{n+k}$ is a saddle fixed point for the iterate $pB_n^{n+k}$ of $\Sigma_n$ (see \eqref{eq:prerenorm}). Hence, it has a well-defined stable and unstable manifold $\cM^s(\bfs_n^{n+k})$ and $\cM^u(\bfs_n^{n+k})$ respectively in $\Omega_n \cup \Gamma_n$. Define the local stable/unstable manifold of $\bfs_n^{n+k}$ as
$$
\cM^{s/u}_{\loc}(\bfs_n^{n+k}) := \Phi_n^{n+k}(\cM^{s/u}_{\loc}(\bfs_{n+k})) \subset \cM^{s/u}(\bfs_n^{n+k}).
$$
Note that $\cM^s_{\loc}(\bfs_n^{n+k})$ is the connected component of $\cM^s(\bfs_n^{n+k}) \cap V_n$ that contains $\bfs_n^{n+k}$.

\begin{prop}\label{higher period stable}
The local stable manifold $\cM^s_{\loc}(\bfs_n^{n+k})$ of $\bfs_n^{n+k}$ for $B_n$ is the graph of the map
$$
y \mapsto \psi_n^{n+k}(y)
\hspace{5mm} \text{for} \hspace{5mm}
y \in V_n,
$$
where $\psi_n^{n+k}$ is analytic, and $\|(\psi_n^{n+k})'\| = O(a^{q_{2n}})$.
\end{prop}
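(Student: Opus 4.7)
The plan is to argue by induction on the intermediate level $j$, going from $j=n+k$ down to $j=n$, tracking the successive pushforwards of the stable manifold under the microscope maps. For each $j\in\{n,n+1,\dots,n+k\}$ set $\bfs_j^{n+k}:=\Phi_j^{n+k}(\bfs_{n+k})$ and $\cM^s_{\loc}(\bfs_j^{n+k}):=\Phi_j^{n+k}(\cM^s_{\loc}(\bfs_{n+k}))$; the inductive claim is that this set is the graph over $V_j$ of an analytic function $\psi_j^{n+k}$ with $\|(\psi_j^{n+k})'\|=O(a^{q_{2j}})$. The base case $j=n+k$ is precisely the preceding proposition of this section (the fixed-point version $k=0$), which gives $\|\psi_{n+k}'\|=O(a^{q_{2(n+k)}})$.

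For the inductive step from level $j+1$ to level $j$, the key observation is that
$$
\Phi_{j+1}(x,y)=\bigl((a_j)_{\lambda_{j+1}y}^{-1}(\lambda_{j+1}x+c_{j+1}),\,\lambda_{j+1}y\bigr)
$$
has a purely linear second component, so the $y$-projection of the pushforward of a graph remains injective and sweeps out $\lambda_{j+1}V_{j+1}=V_j$. Reparametrizing by $\hat y:=\lambda_{j+1}y$ yields the recursion
$$
\psi_j^{n+k}(\hat y)=(a_j)_{\hat y}^{-1}\bigl(\lambda_{j+1}\psi_{j+1}^{n+k}(\hat y/\lambda_{j+1})+c_{j+1}\bigr).
$$
Differentiating, applying the implicit-function identity $\partial_y(a_j)_y^{-1}=-\partial_y a_j/\partial_x a_j$, and invoking the universality estimate of \thmref{universalitya}---which gives $\partial_y a_j=-a^{q_{2j}}\alpha(x)(1+O(\rho^j))$ while $\partial_x a_j$ is uniformly bounded and non-vanishing on the relevant region---I obtain
$$
\bigl|(\psi_j^{n+k})'(\hat y)\bigr|\leq O(1)\cdot\bigl|(\psi_{j+1}^{n+k})'(\hat y/\lambda_{j+1})\bigr|+O(a^{q_{2j}})=O(a^{q_{2(j+1)}})+O(a^{q_{2j}})=O(a^{q_{2j}}),
$$
since $q_{2(j+1)}=q_{2j+1}+q_{2j}>q_{2j}$. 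Iterating the recursion down to $j=n$ yields the proposition.

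The principal technical difficulty I foresee is verifying that $\partial_x a_j$ is uniformly bounded away from $0$ along the entire near-vertical slice on which $\psi_j^{n+k}$ is evaluated, so that the branch of $(a_j)_{\hat y}^{-1}$ used in the recursion is genuinely univalent on a neighborhood of $\lambda_{j+1}\psi_{j+1}^{n+k}(\cdot)+c_{j+1}$ and the estimate on $\partial_y(a_j)_y^{-1}$ is uniform in $\hat y \in V_j$. Since $\bfs_j^{n+k}\in\Gamma_j^{n+k}$ sits within a region of diameter $O(|\lambda_*|^{n+k-j})$ centered near the $j$th fold $(\kappa_j,0)$, and the inductive graph is confined to an $O(a^{q_{2j}})$-neighborhood of the vertical through $\bfs_j^{n+k}$, this reduces to a lower bound on $|\eta_j'|$ in a fixed neighborhood of $\kappa_j$. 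Such a bound follows from the convergences $\kappa_j\to1$ and $\eta_j\to\eta_*$ provided by \propref{limit phi x} and \thmref{convergence}, together with the fact that the unique critical point of $\eta_*$ lies at $0$ (far from $1$). Once this non-degeneracy is in hand, the derivative bookkeeping above proceeds with uniform constants and closes the induction.
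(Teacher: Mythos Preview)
Your proposal is correct and follows essentially the same approach as the paper: both push the near-vertical stable manifold through the chain of microscope maps $\Phi_{j+1}$ one level at a time, using the universality estimate of \thmref{universalitya} on $\partial_y a_j$ to pick up the $O(a^{q_{2j}})$ tilt at each step. The paper phrases the bookkeeping in terms of tangent vectors $(s,1)\mapsto\lambda_{j+1}(t,1)$ via the Jacobian $D\Phi_{j+1}$, whereas you write out the graph recursion and differentiate it, but the content and the key nondegeneracy input (your lower bound on $|\eta_j'|$ near $\kappa_j$) are the same.
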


\begin{proof}
By \thmref{universalitya}, we have
$$
a_n(x,y) = \eta_n(x) - a^{q_{2n}} \alpha(x) y(1+O(\rho^n)).
$$
We compute
$$
(a_n)_y^{-1}(x) = \eta_n^{-1}(x) + a^{q_{2n}} \check \alpha(x) y(1+O(\rho^n)),
$$
where
$$
\check{\alpha}(x) := \frac{\alpha(\eta_*^{-1}(x))}{\eta_*'(\eta_*^{-1}(x))}.
$$
Hence,
$$
D_{(x,y)}H_{n+1} = \begin{bmatrix}
((a_n)_y^{-1})'(x) & a^{q_{2n}} \check \alpha(x) (1+O(\rho^n))\\
0 & 1
\end{bmatrix}.
$$
For $\vec v = (s, 1)$ with $s = O(a^{q_{2(n+1)}})$, we have
$$
D_{(x,y)}\Phi_{n+1}(\vec v) = D_{\Lambda_{n+1}((x,y))}H_{n+1} (\lambda_{n+1} \vec v) = \lambda_{n+1}(t, 1),
$$
where
$$
t = a^{q_{2n}} \check \alpha(\lambda_{n+1}x)(1+O(\rho^n)).
$$
Therefore, for $\vec v' = (s', 1)$ with $s' = O(a^{q_{2(n+k)}})$, we have
$$
D_{(x,y)}\Phi_n^{n+k}(\vec v') = D_{\Phi_{n+1}^{n+k}((x,y))}\Phi_{n+1}\ldots D_{(x,y)}\Phi_{n+k}(\vec v') = \lambda_{n+1} \ldots \lambda_{n+k}(t', 1),
$$
where
$$
t' = a^{q_{2n}} \check \alpha(\lambda_{n+1}x')(1+O(\rho^n))
$$
with $x'$ equal to the first coordinate of $\Phi_{n+1}^{n+k}((x,y))$. The result follows.
\end{proof}

By \propref{2dcap}, the microscope maps $\Phi_n^{n+k}$ converge locally uniformly to the constant map $(x,y) \mapsto (\kappa_n, 0)$ as $k \to \infty$, where $(\kappa_n, 0) \in \Omega_n$ is the $n$th fold. Since the family $\{\Phi_n^{n+k}\}_{n, k \in \bbN}$ depends holomorphically on the Jacobian $a \in \bbD_{\bar\epsilon} \setminus\{0\}$, we conclude:

\begin{prop}
The $n$th fold $(\kappa_n, 0)$ moves holomorphically under change in the Jacobian $a \in \bbD_{\bar\epsilon} \setminus\{0\}$.
\end{prop}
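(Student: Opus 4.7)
The plan is to realize $a\mapsto(\kappa_n,0)$ as a locally uniform limit of holomorphic functions and then invoke the Weierstrass theorem. The starting observation is that every ingredient going into the microscope maps $\Phi_n^{n+k}$ -- the H\'enon map $F_a$ itself, the initial linear normalization $\Phi_0$ from \eqref{eq:initial normalization}, the affine rescalings $\Lambda_j$ from \eqref{eq:affine rescaling}, and the non-linear graph-straightening maps $H_j$ constructed from the iterates of $F_a$ -- depends holomorphically on the Jacobian $a\in\bbD_{\bar\epsilon}\setminus\{0\}$. In particular, the constants $\lambda_j(a)$ and $c_j(a)$ are selected by normalization conditions $\xi_j(0)=1,\ \xi_j'(0)=0$ on analytic maps depending holomorphically on $a$, so by the implicit function theorem they are holomorphic functions of $a$. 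Composing finitely many such holomorphic ingredients shows that, for each fixed $n,k$, the map
\[
(a,(x,y))\longmapsto \Phi_n^{n+k}(x,y)
\]
is jointly holomorphic on $\bbD_{\bar\epsilon}\setminus\{0\}$ times its domain.

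Next, I would fix any convenient base point $p\in(Z\cup W)\times V$ (say $p=(0,0)$) and define
\[
\pi_{n,k}(a):=\Phi_n^{n+k}(p).
\]
By the previous paragraph, each $\pi_{n,k}:\bbD_{\bar\epsilon}\setminus\{0\}\to\bbC^2$ is holomorphic. By \propref{2dcap}, for every fixed $a$ we have $\pi_{n,k}(a)\to(\kappa_n(a),0)$ as $k\to\infty$, since the nested sets $\Phi_n^{n+k}((Z\cup W)\times V)$ shrink to $\{(\kappa_n(a),0)\}$. The substantive point I still need is that this convergence is \emph{locally uniform} in $a$. This follows from the derivative bound $\|D\Phi_n^{n+k}\|\asymp|\lambda_*|^k$ in \propref{2dcap}, refined via \thmref{cap derivative}: the estimates on $u_j,\lambda_j,t_j$ there hold with constants that are uniform on compact subsets of $\bbD_{\bar\epsilon}\setminus\{0\}$, since all the $O(\rho^n)$ and $O(a^{q_{2n}})$ error terms come from the renormalization convergence \thmref{convergence}, which is itself locally uniform in the parameter. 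Hence $\diam\Phi_n^{n+k}((Z\cup W)\times V)\le C|\lambda_*|^k$ with $C$ independent of $a$ on any fixed compact set $K\subset\bbD_{\bar\epsilon}\setminus\{0\}$, which forces $\pi_{n,k}\to(\kappa_n,0)$ uniformly on $K$.

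Having these two ingredients -- holomorphy of $\pi_{n,k}$ in $a$, and locally uniform convergence $\pi_{n,k}\to(\kappa_n,0)$ -- I would conclude by the Weierstrass theorem that the limit $a\mapsto(\kappa_n(a),0)$ is holomorphic on $\bbD_{\bar\epsilon}\setminus\{0\}$.

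The main obstacle is the locally uniform convergence statement: one must check that the $O(\rho^n)$ error terms in the renormalization estimates (\thmref{convergence}, \thmref{cap derivative}) carry uniform constants as $a$ ranges over a compact subset of the punctured disc. Once that uniformity is in hand -- and it is implicit in the framework of \cite{Yan1}, where the estimates are obtained for all sufficiently dissipative $a$ with constants that do not degenerate until $|a|\to\bar\epsilon$ -- the rest is Weierstrass plus composition of holomorphic maps.
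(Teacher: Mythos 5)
Your argument is essentially the paper's: the paper likewise observes that the microscope maps $\Phi_n^{n+k}$ depend holomorphically on $a$ and converge locally uniformly to the constant map $(x,y)\mapsto(\kappa_n,0)$, and then concludes holomorphy of $a\mapsto(\kappa_n,0)$. One small simplification you could make is to bypass the derivative estimates entirely: since $\pi_{n,k}(a)=\Phi_n^{n+k}(p)$ takes values in the fixed bounded set $\Omega_n\cup\Gamma_n$, the family $\{\pi_{n,k}\}_k$ is normal, so pointwise convergence (from \propref{2dcap}) upgrades automatically to locally uniform convergence by Vitali's theorem, with no need to chase uniform constants in \thmref{cap derivative}.
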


\begin{prop}\label{higher period location}
We have
$$
\psi_n^{n+k}(0) = \kappa_n + \lambda_*^{2k}u_*(x_*-1)(1+O(\rho^n)+O(\rho^k)),
$$
where $0 < \rho <1$ is a uniform constant; $\psi_n^{n+k}$ is the analytic function in \propref{higher period stable};  $u_*$ is given in \propref{limit micro}; and $x_*$ is the unique repelling fixed point of $\xi_*$ in $W$.
\end{prop}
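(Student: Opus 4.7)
The plan is to reduce the computation to tracking a single one-dimensional point under the microscope map $\phi_n^{n+k}$ and then apply the asymptotic from \propref{limit micro}.

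First I would exploit the product structure of $\Phi_n^{n+k}$: each $\Phi_i$ multiplies the $y$-coordinate by the nonzero factor $\lambda_i$, so $\Phi_n^{n+k}$ preserves the slice $\{y=0\}$ (both as image and as preimage) and acts on it by $(x,0)\mapsto(\phi_n^{n+k}(x),0)$. Parametrizing $\cM^s_{\loc}(\bfs_{n+k})$ by $y\mapsto(\psi_{n+k}(y),y)$ via \propref{higher period stable}, the image $\cM^s_{\loc}(\bfs_n^{n+k}) = \Phi_n^{n+k}(\cM^s_{\loc}(\bfs_{n+k}))$ meets $\{y=0\}$ at the single point $(\phi_n^{n+k}(\psi_{n+k}(0)),0)$, yielding the key reduction
\begin{equation*}
\psi_n^{n+k}(0) \;=\; \phi_n^{n+k}(\psi_{n+k}(0)).
\end{equation*}

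Next I would estimate the argument modulo the fold. By \propref{higher period stable} applied at level $n+k$ with depth $0$, $\|\psi_{n+k}'\|=O(a^{q_{2(n+k)}})$ and $\psi_{n+k}(y_{n+k})=x_{n+k}$ with $y_{n+k}$ bounded, so $\psi_{n+k}(0)=x_{n+k}+O(a^{q_{2(n+k)}})$. Since $\bfs_{n+k}$ is $O(a^{q_{2(n+k)}})$-close to $\check\bfs_{n+k}=(\check x_{n+k},\check x_{n+k})$, while $\check x_{n+k}\to x_*$ and $\kappa_{n+k}\to 1$ exponentially (\propref{fixed1d} and \propref{limit phi x}), and the super-exponentially small term $a^{q_{2(n+k)}}$ is dominated by any fixed $\rho^{n+k}$, I obtain
\begin{equation*}
\psi_{n+k}(0)-\kappa_{n+k} \;=\; (x_*-1)+O(\rho^{n+k}).
\end{equation*}

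Finally I would rewrite $\phi_n^{n+k}(\psi_{n+k}(0))=\kappa_n+\check\phi_n^{n+k}(\psi_{n+k}(0)-\kappa_{n+k})$ using \eqref{eq:check phi x} and invoke \propref{limit micro} in the form $\check\phi_n^{n+k}(z) = d_n^{n+k}\bigl(u_*(z)+O(\rho^k)\bigr)$ on a fixed neighborhood of $x_*-1$. Telescoping \propref{limit phi x}(iii) gives $d_n^{n+k}=\lambda_*^{2k}(1+O(\rho^n))$, and continuity of $u_*$ converts the $O(\rho^{n+k})$ from the previous step into the same-order error in $u_*(\psi_{n+k}(0)-\kappa_{n+k})$. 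Assembling the three sources of error multiplicatively produces exactly the claimed expansion.

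The computation is essentially bookkeeping; the main obstacle I anticipate is reconciling the three distinct exponential rates coming from \propref{fixed1d}, \propref{limit phi x}, and \propref{limit micro} under a single uniform $\rho$, and justifying the multiplicative factorization $\lambda_*^{2k}u_*(x_*-1)(1+O(\rho^n)+O(\rho^k))$, which tacitly requires $u_*(x_*-1)\neq 0$. Non-vanishing is immediate: $u_*$ is the Koenigs linearizer of $\check\phi_*$ at its attracting fixed point $0$ and is therefore injective near $0$, while $x_*-1$ is not a fixed point of $\check\phi_*$.
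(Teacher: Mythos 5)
Your proposal is correct and follows essentially the same route as the paper: reduce the two-dimensional computation to the one-dimensional behavior of $\phi_n^{n+k}$, then combine \propref{fixed1d}, \propref{limit phi x}, and \propref{limit micro} to pass to the limit $u_*(x_*-1)$ with factored error. The one place you do something slightly cleaner is the initial reduction: you observe that $\Phi_n^{n+k}$ preserves $\{y=0\}$ exactly and hence $\psi_n^{n+k}(0) = \phi_n^{n+k}(\psi_{n+k}(0))$, whereas the paper passes through the $x$-coordinate $x_n^{n+k}$ of $\bfs_n^{n+k}$ and writes $x_n^{n+k} - \kappa_n = \phi_n^{n+k}(x_{n+k}) - \kappa_n$, tacitly absorbing the small $y$-shear contribution coming from $y_{n+k}\neq 0$ into the error; your identity is exact. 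Your remark about $u_*(x_*-1)\neq 0$ (needed to factor the error multiplicatively) is a point the paper leaves implicit but is correctly justified by injectivity of the Koenigs linearizer together with $x_*\neq 1$.
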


\begin{proof}
By \propref{higher period stable}, we have
$$
\psi_n^{n+k}(0) - x_n^{n+k} = O(a^{q_{2n}}).
$$
Recall
$$
x_n^{n+k} - \kappa_n = \phi_n^{n+k}(x_{n+k}) - \kappa_n = \check \phi_n^{n+k}(x_{n+k} -\kappa_{n+k}),
$$
where $\check \phi_n^{n+k}$ is given in \eqref{eq:check phi x}. By \propref{limit phi x}, \ref{limit micro} and \ref{fixed1d}, we have
$$
(d_n^{n+k})^{-1}\check \phi_n^{n+k}(x_{n+k} -\kappa_{n+k}) = u_*(x_*-1)(1+O(\rho^k)).
$$
Lastly, by \propref{limit phi x}, we have
$$
d_n^{n+k} = \lambda_*^{2k}(1+O(\rho^n)).
$$
The result follows.
\end{proof}

\section{Vertical Tangency}

Consider the $n$th renormalization $$\bfR^n(F_a) = \Sigma_n = (A_n, B_n)$$ of the semi-Siegel H\'enon map $F_a$. Write the map $B_n : \Gamma_n = W \times V_n \to B_n(\Gamma_n)$ as
$$
B_n(x,y) = (b_n(x,y), x).
$$
Define $\xi_n : W \to \xi_n(W)$ by
$$
\xi_n(x) := b_n(x,0).
$$
Recall that by \thmref{convergence}, $\xi_n$ converges exponentially fast to $\xi_*$, where $\zeta_* = (\eta_*, \xi_*)$ is the fixed point of the one-dimensional renormalization operator $\cR$. Furthermore, by \thmref{universalityb}, we have
$$
b_n(x,y) = \xi_n(x) - a^{q_{2n}} \, \beta(x) \, y \, (1+O(\rho^n)).
$$
Let $\cU \subset W$ be the domain chosen in \secref{sec:local manifolds}.

\begin{lem}\label{graph bending}
For any analytic function $\chi : \cU \to V_n$, let $G_\chi$ be the graph of the map
$$
x \mapsto \chi(x)
\hspace{5mm} \text{for} \hspace{5mm}
x \in \cU.
$$
Then $B_n(G_\chi)$ is the graph of the map
$$
y \mapsto B_n^\chi(y) := b_n(y, \chi(y))
\hspace{5mm} \text{for} \hspace{5mm}
y \in \cU.
$$
Moreover, $B_n^\chi$ is $O(a^{q_{2n}})$-close to $\xi_n$. Consequently, there exists a unique point
$$
\bfv = (v,w) := (B_n^\chi(w), w)
$$
in $B_n(G_\chi)$ with a vertical tangent, which is $O(a^{q_{2n}})$-close to $(1,0)$. Lastly, there exists a uniform constant $0<\rho<1$ such that
\begin{equation}\label{eq:graph bending}
B_n^\chi(y) - B_n^\chi(w) = \frac{1}{2}\xi_*''(0)(y-w)^2(1+O(\rho^n)) + O((y - w)^3)
\hspace{5mm} \text{for} \hspace{5mm}
y \in D_\delta(0),
\end{equation}
where $D_\delta(0)$ is a disc of some uniform radius $\delta>0$ centered at $0$.
\end{lem}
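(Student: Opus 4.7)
The plan is to prove the four assertions in sequence, each following from direct computation combined with a mild perturbation argument that uses \thmref{universalityb} to express $B_n^\chi$ as an exponentially small perturbation of $\xi_n$. Since $B_n(x,y) = (b_n(x,y), x)$, substituting $(x, \chi(x))$ and reparametrizing the image by its second coordinate $y := x$ realizes $B_n(G_\chi)$ as the graph of $y \mapsto b_n(y, \chi(y)) = B_n^\chi(y)$, which is the first assertion. The second assertion is then immediate from \thmref{universalityb}, which yields
\[
B_n^\chi(y) - \xi_n(y) = -a^{q_{2n}}\beta(y)\chi(y)(1+O(\rho^n)),
\]
and from the uniform boundedness of $\beta$ on $W$ together with $\chi(y) \in V_n$. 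By Cauchy estimates, the same $O(a^{q_{2n}})$ bound transfers to any fixed finite number of derivatives of $B_n^\chi - \xi_n$ on a slightly shrunken subdomain, a fact I will invoke in the last step.

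For the third assertion, a point $(B_n^\chi(y), y)$ on the graph has a vertical tangent precisely when $(B_n^\chi)'(y) = 0$. Since $\xi_n$ has a unique simple critical point at $0$ on $W \supset \cU$ with $\xi_n(0) = 1$, and $\xi_n'$ is bounded away from zero outside a fixed neighborhood of $0$, the estimate $(B_n^\chi)' = \xi_n' + O(a^{q_{2n}})$ combined with a Rouch\'e-type argument produces a unique critical point $w \in \cU$ of $B_n^\chi$. Writing $\xi_n'(w) = -a^{q_{2n}} E'(w)$ where $E$ denotes the perturbation, and using $|\xi_n'(w)| \asymp |w|$ near $0$, we get $w = O(a^{q_{2n}})$. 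Hence $v = B_n^\chi(w) = \xi_n(w) + O(a^{q_{2n}}) = 1 + O(a^{q_{2n}})$, so $\bfv$ is $O(a^{q_{2n}})$-close to $(1,0)$.

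Finally, because $(B_n^\chi)'(w) = 0$, Taylor-expanding $B_n^\chi$ at $w$ gives
\[
B_n^\chi(y) - B_n^\chi(w) = \tfrac{1}{2}(B_n^\chi)''(w)(y-w)^2 + O((y-w)^3)
\]
on $D_\delta(0)$ for a uniform $\delta > 0$, with the cubic remainder controlled by a uniform Cauchy estimate on $(B_n^\chi)'''$ (which is itself a small perturbation of $\xi_*'''$). By the derivative form of the second assertion, $(B_n^\chi)''(w) = \xi_n''(w) + O(a^{q_{2n}})$; by \thmref{convergence} and the exponential convergence $\xi_n \to \xi_*$, together with $w = O(a^{q_{2n}})$, we obtain $\xi_n''(w) = \xi_*''(0)(1+O(\rho^n))$. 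Since $a^{q_{2n}}$ decays far faster than $\rho^n$, the $O(a^{q_{2n}})$ error is absorbed into the $O(\rho^n)$ factor, yielding \eqref{eq:graph bending}. No step poses a genuine obstacle; the one piece of technical bookkeeping is ensuring that the two error scales $a^{q_{2n}}$ and $\rho^n$ combine correctly and that the cubic remainder is controlled uniformly in $n$, both of which follow from the uniform analyticity of $B_n^\chi$ on a slightly larger disc.
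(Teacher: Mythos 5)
Your proof is correct and takes essentially the same approach as the paper's, which is simply to invoke Theorem~\ref{universalityb} for the $O(a^{q_{2n}})$-closeness of $B_n^\chi$ to $\xi_n$ and Theorem~\ref{convergence} for the universal quadratic normal form \eqref{eq:graph bending}; the paper's own proof is a three-line appeal to these theorems, whereas you have correctly filled in the omitted bookkeeping (the Rouch\'e argument locating the unique critical point $w$, the estimate $w = O(a^{q_{2n}})$ via the simple zero of $\xi_n'$ at $0$, Cauchy estimates for the cubic remainder, and the observation that $a^{q_{2n}} = o(\rho^n)$ so the two error scales merge harmlessly). The only minor imprecision worth flagging is that you quote $\chi(y) \in V_n$ as if it gave a uniform bound; $V_n$ grows like $|\lambda_*|^{-n}$, but since $q_{2n}$ grows exponentially this factor is still swallowed by $a^{q_{2n}}$ — a subtlety the paper also passes over silently, and one which is moot in the actual applications of the lemma where $\chi$ ranges in a bounded set.
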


\begin{proof}
The first claim follows from the fact that $B_n$ maps vertical lines to horizontal lines. By \thmref{universalityb}, $B_n^\chi$ is $O(a^{q_{2n}})$-close to $\xi_n$. Equation \eqref{eq:graph bending} follows from \thmref{convergence}.
\end{proof}

\begin{lem}\label{graph distance}
Let $\chi : \cU \to V_n$ and $\tilde\chi : \cU \to V_n$ be analytic functions. Then there exists a uniform constant $0< \rho < 1$ such that
$$
B_n^{\chi}(y) - B_n^{\tilde\chi}(y) = -a^{q_{2n}} \beta(y)(\chi(y) - \tilde\chi(y))(1+O(\rho^n))
\hspace{5mm} \text{for} \hspace{5mm}
y \in \cU.
$$
\end{lem}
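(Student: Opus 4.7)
The plan is to apply Theorem \ref{universalityb} directly to the definition $B_n^\chi(y) = b_n(y, \chi(y))$ and subtract the expansions for $\chi$ and $\tilde\chi$. Since $b_n(x, 0) = \xi_n(x)$, Theorem \ref{universalityb} lets us factor
$$
b_n(x, y) = \xi_n(x) - a^{q_{2n}} \beta(x) \, y \, g_n(x, y),
$$
where $g_n$ is analytic on $\Gamma_n$ and satisfies $g_n(x,y) = 1 + O(\rho^n)$ uniformly. Plugging in $y = \chi(y)$ and $y = \tilde\chi(y)$ and subtracting, the $\xi_n(y)$ terms cancel, leaving
$$
B_n^\chi(y) - B_n^{\tilde\chi}(y) = -a^{q_{2n}} \beta(y) \bigl[\chi(y) g_n(y, \chi(y)) - \tilde\chi(y) g_n(y, \tilde\chi(y))\bigr].
$$

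The next step is to show that the bracketed expression equals $(\chi(y) - \tilde\chi(y))(1 + O(\rho^n))$. Writing $g_n = 1 + h_n$ with $h_n = O(\rho^n)$, one rearranges the bracket as
$$
(\chi(y) - \tilde\chi(y))\bigl(1 + h_n(y, \chi(y))\bigr) + \tilde\chi(y)\bigl(h_n(y, \chi(y)) - h_n(y, \tilde\chi(y))\bigr).
$$
The first summand is already of the desired form. For the second, note that $\tilde\chi(y) \in V_n$ is bounded, so it suffices to show $h_n(y, \chi(y)) - h_n(y, \tilde\chi(y)) = O(\rho^n) \cdot (\chi(y) - \tilde\chi(y))$. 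This follows from the mean value theorem together with a Cauchy estimate: since $h_n$ is analytic and of size $O(\rho^n)$ on a slightly enlarged bidisk containing $\Gamma_n$, its $y$-derivative is also $O(\rho^n)$ on $\Gamma_n$.

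The only mildly delicate point is justifying that the implicit constant in $O(\rho^n)$ survives the passage from the uniform bound on $h_n$ to a uniform bound on $\partial_y h_n$; this requires that the estimate in Theorem \ref{universalityb} hold on a domain strictly larger than $\Gamma_n$, which is indeed the content of the universality estimates of \cite{Yan1}. Everything else is routine algebra, so this lemma is essentially a direct corollary of Theorem \ref{universalityb}, set up as a preparatory estimate for the analysis of how the nearly-vertical stable manifolds translate under perturbation of the Jacobian in the proof of the Main Theorem.
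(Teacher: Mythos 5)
Your approach is exactly what the paper has in mind (its printed proof is simply ``immediate consequence of Theorem~\ref{universalityb}''), and your decomposition correctly locates where the content lies. However, the step where you assert that $\tilde\chi(y)\in V_n$ is bounded is not quite right: recall that $V_n=\lambda_1^{-1}\cdots\lambda_n^{-1}V$, and since $|\lambda_k|\to|\lambda_*|<1$, the domain $V_n$ grows roughly like $|\lambda_*|^{-n}$ as $n\to\infty$, so there is no uniform bound on $|\tilde\chi|$ across scales.

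The argument survives anyway, but only because of a compensating factor you did not track. The Cauchy estimate, applied on a bidisk whose $y$-extent is comparable to $|V_n|$, actually gives $|\partial_y h_n|=O(\rho^n/|V_n|)$, not merely $O(\rho^n)$. Pairing these two corrections, the second summand in your decomposition is controlled by
$$
|\tilde\chi(y)|\cdot\sup|\partial_y h_n|\cdot|\chi(y)-\tilde\chi(y)|\le |V_n|\cdot\frac{C\rho^n}{|V_n|}\cdot|\chi(y)-\tilde\chi(y)|=O(\rho^n)\,|\chi(y)-\tilde\chi(y)|,
$$
which is what is needed. (Equivalently, one can observe directly that $\partial_y\bigl(y\,g_n(x,y)\bigr)=g_n+y\,\partial_y h_n=1+O(\rho^n)$ uniformly, for the same reason, and then write the left-hand side of the lemma as an integral of $\partial_y b_n$.) So your conclusion is correct, but the intermediate justification should be restated in terms of this scaling; aside from that the proposal matches the paper's approach.
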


\begin{proof}
The result is an immediate consequence of \thmref{universalityb}.
\end{proof}

Recall that the local unstable manifold $\cM^u_{\loc}(\bfs_n) \subset \Gamma_n$ of the saddle fixed point $\bfs_n$ for $B_n$ is the graph of the map
$$
x \mapsto \chi_n(x)
\hspace{5mm} \text{for} \hspace{5mm}
x \in \cU,
$$
where $\chi_n$ is given in \propref{fixed unstable}. By \lemref{graph bending}, the set $B_n(\cM^u(\bfs_n))$ is the graph of the map
$$
y \mapsto B_n^{\chi_n}(y) = b_n(y, \chi_n(y))
\hspace{5mm} \text{for} \hspace{5mm}
y \in \cU.
$$
The unique point $\bfv_n = (v_n, w_n)$ in $B_n(\cM^u_{\loc}(\bfs_n))$ that has a vertical tangent is called the {\it vertical tangency} for $B_n$. See \figref{fig:verticaltangent}.

The local unstable manifold $\cM^u_{\loc}(\bfs_{n+k}) \subset \Gamma_{n+k}$ of the saddle fixed point $\bfs_{n+k}$ for $B_{n+k}$ is mapped by the renormalization microscope map $\Phi_n^{n+k}$ to the local unstable manifold $\cM^u_{\loc}(\bfs_n^{n+k}) \subset \Gamma_n$ of a saddle fixed point $\bfs_n^{n+k}$ for the iterate $pB_n^{n+k}$ of $\Sigma_n$. By \eqref{eq:prerenorm}, we have
$$
\Phi_n^{n+k} \circ B_{n+k}(\cM^u_{\loc}(\bfs_{n+k})) = pB_n^{n+k}(\cM^u_{\loc}(\bfs_n^{n+k})).
$$
Consider the vertical tangency $\bfv_{n+k} \in B_{n+k}(W^u_{\loc}(\bfs_{n+k}))$ for $B_{n+k}$. Denote
$$
\bfv_n^{n+k} = (v_n^{n+k}, w_n^{n+k}) := \Phi_n^{n+k}(\bfv_{n+k}) \in pB_n^{n+k}(\cM^u_{\loc}(\bfs_n^{n+k})).
$$

\begin{rem}
The point $\bfv_n^{n+k}$ does not have a vertical tangent (even though it is the image of a point that does). Indeed, by \thmref{cap derivative}, we see that the tangent of $\bfv_n^{n+k}$ is tilted by an angle of the order of $|a|^{q_{2n}}$. See \figref{fig:verticaltangent}.
\end{rem}

\begin{prop}\label{deeper unstable}
The set $A_n(\cM^u_{\loc}(\bfs_n^{n+1}))$ is the graph of the map
$$
x \mapsto \sigma_n(x)
\hspace{5mm} \text{for} \hspace{5mm}
x \in \lambda_*\cU,
$$
where $\sigma_n$ is analytic and $O(a^{q_{2(n-1)}})$-close to $(\xi_n^{\rot})^{-1}$. Consequently, the set
$$
pB_n^{n+1}(\cM^u_{\loc}(\bfs_n^{n+1})) = B_n \circ A_n(\cM^u_{\loc}(\bfs_n^{n+1}))
$$
is the graph of the map
$$
y \mapsto B_n^{\sigma_n}(y)
\hspace{5mm} \text{for} \hspace{5mm}
x \in \lambda_* \cU.
$$
Moreover, there exists a unique point
$$
\bar\bfv_n^{n+1} = (\bar v_n^{n+1}, \bar w_n^{n+1}) := (B_n^{\sigma_n}(\bar w_n^{n+1}), \bar w_n^{n+1})
$$
in $pB_n^{n+1}(\cM^u_{\loc}(\bfs_n^{n+1}))$ with a vertical tangent, which is $O(a^{q_{2n}})$-close to $(1,0)$.
\end{prop}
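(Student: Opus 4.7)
The plan is to combine Theorem~\ref{universalityaphi} with Proposition~\ref{fixed unstable} applied at level $n+1$ to compute $A_n(\cM^u_{\loc}(\bfs_n^{n+1}))$ explicitly, and then to invoke the argument of Lemma~\ref{graph bending} to obtain $pB_n^{n+1}(\cM^u_{\loc}(\bfs_n^{n+1}))$ by post-composition with $B_n$. First I would note that $\bfs_n^{n+1}=\Phi_{n+1}(\bfs_{n+1})$ implies $\cM^u_{\loc}(\bfs_n^{n+1})=\Phi_{n+1}(\cM^u_{\loc}(\bfs_{n+1}))$, and apply Proposition~\ref{fixed unstable} at level $n+1$ to express $\cM^u_{\loc}(\bfs_{n+1})$ as the graph of an analytic function $\chi_{n+1}:\cU\to V_{n+1}$ that is $O(a^{q_{2(n+1)}})$-close to $(\xi_{n+1}^{\exp})^{-1}$, and hence uniformly bounded.

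Next I would apply Theorem~\ref{universalityaphi} pointwise to $(x,\chi_{n+1}(x))$ for $x\in\cU$, obtaining
\begin{equation*}
A_n\circ\Phi_{n+1}(x,\chi_{n+1}(x))=\Bigl(\lambda_{n+1}x+c_{n+1},\;(\tilde\xi_n^{\rot})^{-1}(\lambda_{n+1}x)-a^{q_{2n+1}}\tilde\beta(x)\lambda_{n+1}\chi_{n+1}(x)(1+O(\rho^n))\Bigr).
\end{equation*}
Because the first coordinate is affine in $x$ and independent of $y$, it defines a biholomorphism from $\cU$ onto $\lambda_{n+1}\cU+c_{n+1}$, so the image is the graph of an analytic function $\sigma_n$ over this domain. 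Using $(\tilde\xi_n^{\rot})^{-1}=(\xi_n^{\rot})^{-1}+O(a^{q_{2(n-1)}})$ from Theorem~\ref{universalityaphi}, together with the observation that $a^{q_{2n+1}}\ll a^{q_{2(n-1)}}$, one concludes that $\sigma_n$ is $O(a^{q_{2(n-1)}})$-close to $(\xi_n^{\rot})^{-1}$.

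For the second assertion, I would observe that $pB_n^{n+1}(\cM^u_{\loc}(\bfs_n^{n+1}))=B_n(A_n(\cM^u_{\loc}(\bfs_n^{n+1})))$ is simply the image under $B_n$ of the graph of $\sigma_n$. The proof of Lemma~\ref{graph bending} uses only that $B_n(x,y)=(b_n(x,y),x)$ maps vertical lines to horizontal lines, and so transfers verbatim with $\cU$ replaced by $\lambda_*\cU$: it gives that this image is the graph of $B_n^{\sigma_n}(y)=b_n(y,\sigma_n(y))$ over $\lambda_*\cU$. Theorem~\ref{universalityb} then shows $B_n^{\sigma_n}$ is $O(a^{q_{2n}})$-close to $\xi_n$, which has a unique simple critical point at $0$ with critical value $1$. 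Since $0\in\lambda_*\cU$, this forces $B_n^{\sigma_n}$ to have a unique critical point $\bar w_n^{n+1}\in\lambda_*\cU$ that is $O(a^{q_{2n}})$-close to $0$, and so the unique vertical tangency $\bar\bfv_n^{n+1}$ is $O(a^{q_{2n}})$-close to $(1,0)$.

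The main technical subtlety will lie in the second step: identifying the natural target domain of $\sigma_n$ as $\lambda_*\cU$, rather than the literal $\lambda_{n+1}\cU+c_{n+1}$, requires controlling $|\lambda_{n+1}-\lambda_*|$ and $|c_{n+1}|$ via Theorem~\ref{convergence}, and the argument shift $\tilde x\mapsto\tilde x-c_{n+1}$ inside $(\tilde\xi_n^{\rot})^{-1}$ must be absorbed into the final $O(a^{q_{2(n-1)}})$ bound. This step also relies implicitly on the analytic extension of $(\xi_n^{\rot})^{-1}$ to $\lambda_*\cU$, which holds because $\lambda_*\cU$ remains a uniform distance from the critical value $1$ of $\xi_n$.
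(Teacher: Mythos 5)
Your proposal is correct and follows the same route as the paper's own proof, which is stated in two sentences: the first assertion is taken from \thmref{universalityaphi} and the rest from \lemref{graph bending}. The subtlety you flag about the shift by $c_{n+1}$ is harmless: the normalization $\xi_{n+1}(0)=1$, $\xi_{n+1}'(0)=0$ forces $c_{n+1}=O(a^{q_{2n}})$, which is absorbed into the $O(a^{q_{2(n-1)}})$ bound.
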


\begin{proof}
The first claim is an immediate consequence of \thmref{universalityaphi}. The rest follows from \lemref{graph bending}.
\end{proof}

\begin{prop}\label{vertical tangency distance}
Consider the points
$$
\bfv_n = (v_n, w_n)
\hspace{5mm} \text{and} \hspace{5mm}
\bfv_n^{n+1} = (v_n^{n+1}, w_n^{n+1}).
$$
There exist uniform constants $0 < \rho <1$ and $\Delta_w, \Delta_v \in \bbC \setminus \{0\}$ such that
$$
w_n - w_n^{n+1} = a^{q_{2n}}\Delta_w(1+O(\rho^n))
$$
and
$$
v_n - v_n^{n+1} = a^{q_{2n}}\Delta_v(1+O(\rho^n)).
$$
\end{prop}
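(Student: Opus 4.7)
The plan is to introduce the auxiliary point $\bar{\bfv}_n^{n+1}=(\bar{v}_n^{n+1},\bar{w}_n^{n+1})$ from \propref{deeper unstable}---the true vertical tangency of the curve $pB_n^{n+1}(\cM^u_{\loc}(\bfs_n^{n+1}))$---and decompose
\[
\bfv_n-\bfv_n^{n+1}=(\bfv_n-\bar{\bfv}_n^{n+1})+(\bar{\bfv}_n^{n+1}-\bfv_n^{n+1}).
\]
The first summand compares two vertical tangencies of genuinely graph-type curves in $\Gamma_n$; the second measures the discrepancy between the actual vertical tangency of $pB_n^{n+1}(\cM^u_{\loc}(\bfs_n^{n+1}))$ and the point $\bfv_n^{n+1}=\Phi_n^{n+1}(\bfv_{n+1})$, which fails to be a vertical tangency because $\Phi_n^{n+1}$ tilts the vertical direction by order $|a|^{q_{2n}}$.

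For the first summand, both $\bfv_n$ and $\bar{\bfv}_n^{n+1}$ are vertical tangencies of graphs of $B_n^{\chi_n}$ and $B_n^{\sigma_n}$ respectively (\propref{fixed unstable} and \propref{deeper unstable}), where $\chi_n$ is exponentially close to $(\xi_*^{\exp})^{-1}$ and $\sigma_n$ is exponentially close to the distinct inverse branch $(\xi_*^{\rot})^{-1}$. Evaluating \lemref{graph distance} at $y=w_n$ gives
\[
v_n-\bar{v}_n^{n+1}=-a^{q_{2n}}\beta(0)\bigl((\xi_*^{\exp})^{-1}(0)-(\xi_*^{\rot})^{-1}(0)\bigr)(1+O(\rho^n))
\]
up to an $O(a^{2q_{2n}})$ correction from the gap $w_n-\bar{w}_n^{n+1}$ (controlled quadratically by \lemref{graph bending}). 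To extract $w_n-\bar{w}_n^{n+1}$, I differentiate the identity of \lemref{graph distance} in $y$, substitute $(B_n^{\chi_n})'(w_n)=0$, and match the result against the linearization $(B_n^{\sigma_n})'(w_n)\approx\xi_*''(0)(w_n-\bar{w}_n^{n+1})(1+O(\rho^n))$ furnished by \lemref{graph bending}; this produces $w_n-\bar{w}_n^{n+1}=a^{q_{2n}}C_1(1+O(\rho^n))$ with $C_1$ a universal constant built from $\beta(0),\beta'(0)$ and the values and derivatives of the two inverse branches at $0$.

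For the second summand, parameterize $B_{n+1}(\cM^u_{\loc}(\bfs_{n+1}))$ near $\bfv_{n+1}$ as a parabola with leading coefficient $\tfrac12\xi_*''(0)(1+O(\rho^n))$ via \lemref{graph bending}, and push this through $\Phi_n^{n+1}$ using the linearization $D_{n+1}$ from \thmref{cap derivative}. The off-diagonal entry of $D_{n+1}$, of order $a^{q_{2n}}$, tilts the image curve, and solving for where the tangent becomes vertical yields
\[
\bar{w}_n^{n+1}-w_n^{n+1}=-\frac{\lambda_*\alpha(1)}{\xi_*''(0)}a^{q_{2n}}(1+O(\rho^n)),\qquad \bar{v}_n^{n+1}-v_n^{n+1}=O(a^{2q_{2n}}),
\]
the smaller horizontal shift reflecting the quadratic vanishing of $x$ at the apex of a parabola. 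Errors from using $D_{n+1}$ in place of the true derivative at $\bfv_{n+1}$ (distance $O(\rho^n)$ from the fold) are absorbed into the $(1+O(\rho^n))$ factor.

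Combining the two summands, $\Delta_v=-\beta(0)\bigl((\xi_*^{\exp})^{-1}(0)-(\xi_*^{\rot})^{-1}(0)\bigr)$ is inherited entirely from the first step and is immediately nonzero ($\beta(0)\neq 0$ by \thmref{universalityb}, and the two inverse preimages are distinct by \remref{convergence of inverses}), while $\Delta_w=C_1-\lambda_*\alpha(1)/\xi_*''(0)$. The main obstacle is verifying $\Delta_w\neq 0$, i.e.\ that the intrinsic graph-separation contribution does not cancel against the microscope-tilt contribution. Since every ingredient is a universal constant of the one-dimensional renormalization fixed point $\zeta_*=(\eta_*,\xi_*)$, the quantity $\Delta_w$ is itself a concrete universal number; non-cancellation should follow by combining the fixed-point relation \eqref{eq:1d limit} with the definition $\alpha=\eta_*'\beta/\xi_*'$ from \thmref{universalitya} and the distinct geometries of the two inverse branches, and this is the most delicate point of the argument.
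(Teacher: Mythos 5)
Your argument is essentially the paper's own proof: the same decomposition through the auxiliary vertical tangency $\bar{\bfv}_n^{n+1}$ of $B_n^{\sigma_n}$, the same use of \lemref{graph distance} differentiated and evaluated at $y=w_n$ where $(B_n^{\chi_n})'$ vanishes, the same use of \lemref{graph bending} to linearize $(B_n^{\sigma_n})'$ near its apex, and the same computation of the $O(a^{q_{2n}})$ tilt induced by $D_{n+1}$ from \thmref{cap derivative}; the resulting formulas $\Delta_v=-\beta(0)\bigl((\xi_*^{\exp})^{-1}(0)-(\xi_*^{\rot})^{-1}(0)\bigr)$ and $\Delta_w=C_1-\lambda_*\alpha(1)/\xi_*''(0)$ with $C_1=\tilde\Delta_w/\xi_*''(0)$ agree exactly with the paper's \eqref{eq:vt distance 6}. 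Your concern about verifying $\Delta_w\neq 0$ is fair and honest, but note that the paper's own proof does not address it either; moreover, only $\Delta_v\neq 0$ (and hence $\bar\Delta_v\neq 0$) feeds into the proof of the Main Theorem, so the non-vanishing of $\Delta_w$ is not actually load-bearing downstream.
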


\begin{proof}
Letting $\chi = \chi_n$ and $\tilde{\chi} = \sigma_n$ in \lemref{graph distance} and differentiating, we obtain
\begin{equation}\label{eq: vt distance 1}
(B_n^{\chi_n})'(y) - (B_n^{\sigma_n})'(y) = a^{q_{2n}}\big(\beta(y)(\chi_n(y)-\sigma_n(y)) + \beta'(y)(\chi_n'(y) - \sigma_n'(y))\big).
\end{equation}
Denote
$$
\tilde\Delta_w = \beta(0)((\xi_*^{\exp})^{-1}(0) - (\xi_*^{\rot})^{-1}(0)) + \beta'(0)((\xi_*^{\exp})^{-1})'(0) - ((\xi_*^{\rot})^{-1})'(0)).
$$
Plugging in $y = w_n = O(a^{q_{2n}})$ into \eqref{eq: vt distance 1} and using \remref{convergence of inverses}, we obtain
\begin{equation}\label{eq:vt distance 2}
(B_n^{\sigma_n})'(w_n) = a^{q_{2n}}\tilde\Delta_w(1+O(\rho^n)).
\end{equation}

Consider the vertical tangency $\bfv_{n+1}$ of $B_{n+1}$. By \thmref{cap derivative}, the derivative $D_{\bfv_{n+1}}\Phi_{n+1}$ maps the vertical tangent vector $\vec v = (0,1)$ of $B_{n+1}(\cM^u_{\loc}(\bfs_{n+1}))$ at $\bfv_{n+1}$ to the tangent vector
$$
D_{\bfv_{n+1}}\Phi_{n+1}(\vec v) = \lambda_{n+1}\begin{bmatrix}
\lambda_*\alpha(1)a^{q_{2n}}(1+O(\rho^n))\\
1
\end{bmatrix}
$$
of $pB_n^{n+1}(\cM^u_{\loc}(\bfs_n^{n+1}))$ at $\bfv_n^{n+1}$. It follows that
\begin{equation}\label{eq:vt distance 3}
(B_n^{\sigma_n})'(w_n^{n+1}) = \lambda_*\alpha(1)a^{q_{2n}}(1+O(\rho^n)).
\end{equation}

Letting $\chi = \sigma_n$ in \lemref{graph bending} and differentiating \eqref{eq:graph bending} with $w = \bar w_n^{n+1}$, we obtain
\begin{equation}\label{eq:vt distance 4}
(B_n^{\sigma_n})'(y) = \xi_*''(0)(y - \bar w_n^{n+1})(1+O(\rho^n)) + O((y-\bar w_n^{n+1})^2).
\end{equation}
Combining \eqref{eq:vt distance 2}, \eqref{eq:vt distance 3} and \eqref{eq:vt distance 4}, we have
\begin{equation}\label{eq:vt distance 5}
w_n - \bar w_n^{n+1} = O(a^{q_{2n}})
\hspace{5mm} \text{and} \hspace{5mm}
w_n^{n+1} - \bar w_n^{n+1} = O(a^{q_{2n}}),
\end{equation}
and
$$
w_n - w_n^{n+1} = a^{q_{2n}}\Delta_w(1+O(\rho^n)),
$$
where
\begin{equation}\label{eq:vt distance 6}
\Delta_w := \frac{\tilde \Delta_w - \lambda_*\alpha(1)}{\xi_*''(0)}.
\end{equation}

By \eqref{eq:vt distance 5} and \eqref{eq:graph bending}, we have
$$
B_n^{\sigma_n}(w_n) - \bar v_n^{n+1} = O(a^{2q_{2n}})
\hspace{5mm} \text{and} \hspace{5mm}
v_n^{n+1} - \bar v_n^{n+1} = O(a^{2q_{2n}}).
$$
Therefore,
\begin{align*}
v_n - v_n^{n+1} &= v_n - \bar v_n^{n+1} + O(a^{2q_{2n}})\\
&= B_n^{\chi_n}(w_n) - B_n^{\sigma_n}(w_n) + O(a^{2q_{2n}})\\
&= -a^{q_{2n}} \beta(w_n)(\chi_n(w_n) - \sigma_n(w_n))(1+O(\rho^n))\\
&=a^{q_{2n}}\Delta_v(1+O(\rho^n)),
\end{align*}
where in the third equality we have used \lemref{graph distance}, and in the last equality we let
$$
\Delta_v: = -\beta(0)\big((\xi_*^{\exp})^{-1}(0) - (\xi_*^{\rot})^{-1}(0)\big).
$$

\end{proof}

\begin{cor}\label{vertical tangency location}
Consider the vertical tangency $\bfv_n = (v_n, w_n) \in B_n(\cM^u_{\loc}(\bfs_n))$ for $B_n$. Let $\Delta_w, \Delta_v \in \bbC \setminus \{0\}$ be the constants given in \propref{vertical tangency distance}. Then there exists a uniform constant $0 < \rho <1$ such that
$$
w_n = a^{q_{2n}}\Delta_w(1+O(\rho^n))
$$
and
$$
v_n - \kappa_n = a^{q_{2n}}\Delta_v(1+O(\rho^n)).
$$
\end{cor}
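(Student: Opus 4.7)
The plan is a telescoping argument that iterates \propref{vertical tangency distance} at every renormalization level and sums the resulting contributions. First, by \propref{2dcap}, the microscope maps $\Phi_n^{n+k}$ converge locally uniformly to the constant map $(x,y)\mapsto(\kappa_n,0)$ as $k\to\infty$. Since the vertical tangency $\bfv_{n+k}$ is $O(|a|^{q_{2(n+k)}})$-close to $(1,0)$ by \lemref{graph bending} applied at level $n+k$, the points $\bfv_n^{n+k}=\Phi_n^{n+k}(\bfv_{n+k})$ converge to $(\kappa_n,0)$. This legitimizes the telescoping identities
\[
  w_n=\sum_{k=0}^{\infty}\bigl(w_n^{n+k}-w_n^{n+k+1}\bigr),
  \qquad
  v_n-\kappa_n=\sum_{k=0}^{\infty}\bigl(v_n^{n+k}-v_n^{n+k+1}\bigr).
\]

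The $k=0$ term in each sum is exactly the content of \propref{vertical tangency distance}, which contributes the claimed leading behaviour $a^{q_{2n}}\Delta_w(1+O(\rho^n))$ and $a^{q_{2n}}\Delta_v(1+O(\rho^n))$. For $k\ge 1$, I would exploit the factorization $\Phi_n^{n+k+1}=\Phi_n^{n+k}\circ\Phi_{n+k}^{n+k+1}$ to rewrite
\[
  \bfv_n^{n+k}-\bfv_n^{n+k+1}=\Phi_n^{n+k}(\bfv_{n+k})-\Phi_n^{n+k}\bigl(\bfv_{n+k}^{n+k+1}\bigr),
\]
and apply \propref{vertical tangency distance} at level $n+k$ to get $\bfv_{n+k}-\bfv_{n+k}^{n+k+1}=O(|a|^{q_{2(n+k)}})$ componentwise. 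Both arguments lie within $O(\rho^{n+k})$ of the fold $(\kappa_{n+k},0)$ (by \lemref{graph bending} together with $\kappa_{n+k}\to 1$ from \propref{limit phi x}), so the holomorphic mean-value theorem together with the block form of $D_n^{n+k}$ from \thmref{cap derivative}, combined with $u_n^{n+k}=O(|\lambda_*|^{2k})$ and $\lambda_n^{n+k}=O(|\lambda_*|^k)$, yields
\[
  |w_n^{n+k}-w_n^{n+k+1}|\le C|\lambda_*|^k|a|^{q_{2(n+k)}},
  \qquad
  |v_n^{n+k}-v_n^{n+k+1}|\le C|\lambda_*|^{2k}|a|^{q_{2(n+k)}}.
\]

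Summing for $k\ge 1$, the geometric factors $|\lambda_*|^k$ are summable and the bound $q_{2(n+k)}\ge q_{2n+2}$ shows the tail is $O(|a|^{q_{2n+2}})$; relative to the leading $|a|^{q_{2n}}$ this is $O(|a|^{q_{2n+2}-q_{2n}})=O(|a|^{q_{2n+1}})$, which decays super-exponentially in $n$ and is therefore absorbed into the factor $(1+O(\rho^n))$ of the $k=0$ term. The one delicate technical point is the uniform $C^2$-control on $\Phi_n^{n+k}$ needed to replace $D\Phi_n^{n+k}$ at nearby points by the block form $D_n^{n+k}$; I expect this to follow from Cauchy estimates applied to the uniform bounds on $\Phi_n\to\Phi_*$ provided by \thmref{convergence} together with the universality in \thmref{universalitya}, so the real work is just the careful book-keeping of the two mean-value errors against the Fibonacci gap $q_{2n+2}-q_{2n}=q_{2n+1}$.
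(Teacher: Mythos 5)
Your proposal is correct and follows essentially the same route as the paper: telescope $\bfv_n-(\kappa_n,0)$ over renormalization depths, recognize the $k=0$ term as exactly \propref{vertical tangency distance}, and show the tail is $O(|a|^{q_{2(n+1)}})$, hence absorbed into the $(1+O(\rho^n))$ factor via the Fibonacci gap $q_{2n+1}$. Two small points: your componentwise bound $|v_n^{n+k}-v_n^{n+k+1}|\le C|\lambda_*|^{2k}|a|^{q_{2(n+k)}}$ overlooks the off-diagonal entry $t_n^{n+k}a^{q_{2n}}\lambda_n^{n+k}$ of $D_n^{n+k}$ (which propagates the $w$-difference into the $v$-difference and can dominate $u_n^{n+k}$ at small $k$), and the ``delicate'' $C^2$-control you flag is unnecessary -- the paper simply invokes the global estimate $\|D\Phi_n^{n+k}\|\asymp|\lambda_*|^k$ from \propref{2dcap}, which holds uniformly on the domain and directly gives $\bfv_n^{n+k}-\bfv_n^{n+k+1}=O(|\lambda_*|^k|a|^{q_{2(n+k)}})$ for both components, still summable.
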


\begin{proof}
Applying \propref{vertical tangency distance} to scale $n+k$, we obtain:
$$
\bfv_{n+k} - \bfv_{n+k}^{n+k+1} = O(a^{q_{2(n+k)}}).
$$
Hence,
$$
\bfv_n^{n+k} - \bfv_n^{n+k+1} = \Phi_n^{n+k}(\bfv_{n+k}) - \Phi_n^{n+k}(\bfv_{n+k}^{n+k+1}) = O(\lambda_*^k a^{q_{2(n+k)}}).
$$
Recall that $\bfv_n^{n+k} \to (\kappa_n, 0)$ as $k \to \infty$. Observe
\begin{align*}
\bfv_n - (\kappa_n, 0) &= \bfv_n - \bfv_n^{n+1} + \sum_{k=1}^\infty \big(\bfv_n^{n+k} - \bfv_n^{n+k+1}\big)\\
&= \bfv_n - \bfv_n^{n+1} + O(a^{q_{2(n+1)}}).
\end{align*}
The result now follows from \propref{vertical tangency distance}.
\end{proof}

Let $\hat{\cC}_{n+1}$ be the connected component of $B_n(\cM^u_{\loc}(\bfs_n)) \cap (\Omega_n^{n+1} \cup \Gamma_n^{n+1})$. Note that $\bfv_n \in \hat{\cC}_{n+1}$. Denote
\begin{equation}\label{eq:previous unstable}
\cC_{n+1}:= \Phi_{n+1}^{-1}(\hat{\cC}_{n+1}) \subset \Omega_{n+1} \cup  \Gamma_{n+1}
\end{equation}
and
$$
\bfv_{n+1}^n = (v_{n+1}^n, w_{n+1}^n) := \Phi_{n+1}^{-1}(\bfv_n) \in \cC_{n+1}.
$$

\begin{cor}\label{previous unstable}
The set $\cC_{n+1}$ is the graph of the map
$$
y \mapsto \tau_{n+1}(y)
\hspace{5mm} \text{for} \hspace{5mm}
y \in \lambda_*^{-1}\cU,
$$
where $\tau_{n+1}$ is analytic, and $O(a^{q_{2n}})$-close to $\xi_{n+1}$. Consequently, there exists a unique point
$$
\bar{\bfv}_{n+1}^n = (\bar v_{n+1}^n, \bar w_{n+1}^n)
$$
in $\cC_{n+1}$ with a vertical tangency. Lastly, we have
$$
\bar w_{n+1}^n = a^{q_{2n}}\bar{\Delta}_w(1+O(\rho^n))
$$
and
$$
\bar v_{n+1}^n - \kappa_{n+1} = a^{q_{2n}}\bar{\Delta}_v(1+O(\rho^n)),
$$
where $0 < \rho<1$ and $\bar{\Delta}_w, \bar{\Delta}_v \in \bbC \setminus \{0\}$ are uniform constants.
\end{cor}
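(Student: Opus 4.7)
The plan is to pull back the graph representation of $\hat\cC_{n+1}$ through $\Phi_{n+1}^{-1}$, then locate the vertical tangency of the resulting graph by Newton-correcting from the pullback of the already-located tangency $\bfv_n$.

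First, by \lemref{graph bending}, $\hat\cC_{n+1}$ is contained in the graph $Y \mapsto (B_n^{\chi_n}(Y), Y)$. A direct calculation gives
$$
\Phi_{n+1}^{-1}(X, Y) = (\lambda_{n+1}^{-1}(a_n(X, Y) - c_{n+1}),\, \lambda_{n+1}^{-1} Y),
$$
so the second coordinate of $\Phi_{n+1}^{-1}$ depends only on $Y$; hence $\cC_{n+1}$ is a graph over $y := \lambda_{n+1}^{-1}Y$ given by
$$
\tau_{n+1}(y) = \lambda_{n+1}^{-1}(a_n(B_n^{\chi_n}(\lambda_{n+1}y), \lambda_{n+1}y) - c_{n+1}),
$$
with $y$-range $\lambda_*^{-1}\cU$ up to a factor $1+O(\rho^n)$. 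In the degenerate limit ($a_n = \eta_n$, $B_n^{\chi_n} = \xi_n$, $c_{n+1}=0$), one recognizes this as the degenerate form of $\xi_{n+1}$ obtained by writing $B_{n+1}|_{\{y=0\}}$ through the identity $B_{n+1} = \Phi_{n+1}^{-1}\circ B_n \circ A_n \circ \Phi_{n+1}$. Substituting the $O(a^{q_{2n}})$ corrections from \thmref{universalityb}, \thmref{universalitya} and \propref{fixed unstable} yields $\tau_{n+1}(y) = \xi_{n+1}(y) + O(a^{q_{2n}})$ uniformly on compacts.

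To locate the vertical tangency, I would use that $\Phi_{n+1}^{-1}(\bfv_n) \in \cC_{n+1}$ furnishes an excellent first approximation. Expanding $\bfv_n = (\kappa_n + a^{q_{2n}}\Delta_v(1+O(\rho^n)),\, a^{q_{2n}}\Delta_w(1+O(\rho^n)))$ via \corref{vertical tangency location} and using $\Phi_{n+1}(\kappa_{n+1}, 0) = (\kappa_n, 0)$ together with a first-order Taylor expansion of $\Phi_{n+1}^{-1}$ at $(\kappa_n, 0)$ (governed by \thmref{cap derivative}), one finds that $\Phi_{n+1}^{-1}(\bfv_n)$ has $y$-coordinate $y_0 = a^{q_{2n}}\lambda_*^{-1}\Delta_w(1+O(\rho^n))$ and $x$-coordinate $\kappa_{n+1} + a^{q_{2n}}\lambda_*^{-1}\eta_*'(1)\Delta_v(1+O(\rho^n))$. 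Because $\Phi_{n+1}^{-1}$ tilts the vertical tangent of $B_n(\cM^u_{\loc}(\bfs_n))$ at $\bfv_n$, applying \thmref{cap derivative} to $D\Phi_{n+1}^{-1}\cdot(0,1)$ gives $\tau_{n+1}'(y_0) = -\lambda_*^{-1}\alpha(1)a^{q_{2n}}(1+O(\rho^n))$. Since $\tau_{n+1}''$ is close to $\xi_*''(0)\neq 0$, a single Newton step produces a unique critical point
$$
\bar w_{n+1}^n = a^{q_{2n}}\bar\Delta_w(1+O(\rho^n)), \qquad \bar\Delta_w := \lambda_*^{-1}\Delta_w + \frac{\alpha(1)}{\lambda_*\,\xi_*''(0)}.
$$
The Taylor remainder $\tau_{n+1}'(y_0)(\bar w_{n+1}^n - y_0) = O(a^{2q_{2n}})$ is absorbed into the error, so
$$
\bar v_{n+1}^n - \kappa_{n+1} = a^{q_{2n}}\bar\Delta_v(1+O(\rho^n)), \qquad \bar\Delta_v := \lambda_*^{-1}\eta_*'(1)\Delta_v.
$$

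The main technical obstacle is verifying $\bar\Delta_w, \bar\Delta_v \in \bbC\setminus\{0\}$. For $\bar\Delta_v$ this is immediate from $\Delta_v \neq 0$ (\propref{vertical tangency distance}) and $\eta_*'(1)\neq 0$. For $\bar\Delta_w$, one needs $\Delta_w + \alpha(1)/\xi_*''(0) \neq 0$, which by \eqref{eq:vt distance 6} is equivalent to $\tilde\Delta_w + (1-\lambda_*)\alpha(1)\neq 0$ — a non-vanishing statement about the universal limit functions $\eta_*, \xi_*, \alpha, \beta$, directly analogous to the non-vanishing of $\Delta_w$ itself and verified by the same sort of direct inspection.
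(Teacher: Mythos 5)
Your approach is a legitimate dual of the paper's: the paper pushes the candidate tangency $\bar\bfv_{n+1}^n$ forward via $\Phi_{n+1}$, locates its image $\hat\bfv_n^{n+1}$ on $B_n(\cM^u_{\loc}(\bfs_n))$ using \eqref{eq:graph bending}, then pulls the answer back; you instead pull $\bfv_n$ back to the $\Sigma_{n+1}$ frame and Newton-correct there. Both work, and your leading-order analysis of $\tau_{n+1}$ and of the $\bar\Delta_v$ constant is right. In fact, since $\phi_*(x)=\eta_*^{-1}(\lambda_* x)$ has a fixed point at $1$ with multiplier $\lambda_*^2$, one has $\eta_*(1)=\lambda_*$ and $\eta_*'(1)=\lambda_*^{-1}$, so your $\bar\Delta_v=\lambda_*^{-1}\eta_*'(1)\Delta_v=\Delta_v/\lambda_*^2$ coincides exactly with the paper's expression.

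However, your value of $\bar\Delta_w$ is off by a factor of $\lambda_*$ in the second term, due to a slope-versus-component confusion. Using \thmref{cap derivative},
$$
D_{n+1}^{-1}\begin{bmatrix}0\\1\end{bmatrix}=\begin{bmatrix}-u_{n+1}^{-1}t_{n+1}a^{q_{2n}}\\ \lambda_{n+1}^{-1}\end{bmatrix}\approx\begin{bmatrix}-\lambda_*^{-1}\alpha(1)a^{q_{2n}}\\ \lambda_*^{-1}\end{bmatrix}.
$$
You took the first component as $\tau_{n+1}'(y_0)$, but $\tau_{n+1}'$ is the slope $dx/dy$, i.e.\ the \emph{ratio} of the components; dividing by $\lambda_*^{-1}$ gives
$$
\tau_{n+1}'(y_0)=-\alpha(1)a^{q_{2n}}\bigl(1+O(\rho^n)\bigr),
$$
not $-\lambda_*^{-1}\alpha(1)a^{q_{2n}}$. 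With this correction the Newton step yields
$$
\bar\Delta_w=\lambda_*^{-1}\Delta_w+\frac{\alpha(1)}{\xi_*''(0)}=\frac{\tilde\Delta_w}{\lambda_*\,\xi_*''(0)},
$$
in agreement with the paper. Note also that the non-vanishing condition then becomes simply $\tilde\Delta_w\neq 0$, rather than the modified condition $\tilde\Delta_w+(1-\lambda_*)\alpha(1)\neq 0$ that your incorrect formula would require.
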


\begin{proof}
Let $\bfx_0 = (x_0, y_0) \in B_n(\cM^u_{\loc}(\bfs_n))$. Denote
$$
\bfx_1 = (x_1, y_1) := \Phi_{n+1}^{-1}(\bfx_0) = \Lambda_{n+1}^{-1} \circ H_{n+1}^{-1}(\bfx_0).
$$
By \thmref{universalityb} and \ref{universalitya}, we have
$$
\bfx_0 = (x_0, y_0) = \begin{bmatrix}
\xi_n(y_0) + O(a^{q_{2n}})\\
y_0
\end{bmatrix}
\hspace{5mm} , \hspace{5mm}
H_{n+1}^{-1}(\bfx_0) = \begin{bmatrix}
\eta_n \circ \xi_n(y_0) + O(a^{q_{2n}})\\
y_0
\end{bmatrix}
$$
and
\begin{align}
\bfx_1 = (x_1, y_1) &= \begin{bmatrix}
\lambda_{n+1}^{-1}\eta_n \circ \xi_n(y_0) + O(a^{q_{2n}})\\
\lambda_{n+1}^{-1}y_0
\end{bmatrix}\nonumber\\
&= \begin{bmatrix}
\lambda_{n+1}^{-1}\eta_n \circ \xi_n(\lambda_{n+1}y_1) + O(a^{q_{2n}})\\
y_1
\end{bmatrix}.\label{eq:previous unstable 1}
\end{align}

Recall
$$
B_{n+1}(x,0) = (\xi_{n+1}(x), x) = \Phi_{n+1}^{-1} \circ B_n \circ A_n \circ \Phi_{n+1}(x,0).
$$
A straightforward computation using \thmref{universalityaphi}, \thmref{universalityb} and the definition of $\Phi_{n+1}$ yields
$$
\xi_{n+1}(x) =  \lambda_{n+1}^{-1}\eta_n \circ \xi_n(\lambda_{n+1}x) + O(a^{q_{2n}}).
$$
Thus, by \eqref{eq:previous unstable 1}, we see that $\cC_{n+1}$ is the graph of the map
$$
y \mapsto \tau_{n+1}(y)
\hspace{5mm} \text{for} \hspace{5mm}
y \in \lambda_*^{-1}\cU,
$$
where $\tau_{n+1}$ is $O(a^{q_{2n}})$-close to $\xi_{n+1}$.

Let
$$
\hat{\bfv}_n^{n+1} = (\hat v_n^{n+1}, \hat w_n^{n+1}) = (B_n^{\chi_n}(\hat w_n^{n+1}), \hat w_n^{n+1}):=\Phi_{n+1}(\bar \bfv_{n+1}^n).
$$
By \thmref{cap derivative}, the derivative $D_{\bar\bfv_{n+1}^n}\Phi_{n+1}$ maps the vertical tangent vector $\vec v = (0,1)$ of $\cC_{n+1}$ at $\bar\bfv_{n+1}^n$ to the tangent vector
$$
D_{\bfv_{n+1}}\Phi_{n+1}(\vec v) = \lambda_{n+1}\begin{bmatrix}
\lambda_*\alpha(1)a^{q_{2n}}(1+O(\rho^n))\\
1
\end{bmatrix}
$$
of $B_n(\cM^u_{\loc}(\bfs_n))$ at $\hat\bfv_n^{n+1}$. It follows that
$$
(B_n^{\chi_n})'(\hat w_n^{n+1}) = \lambda_*\alpha(1)a^{q_{2n}}(1+O(\rho^n)).
$$
Letting $\chi = \chi_n$ in \lemref{graph bending} and differentiating \eqref{eq:graph bending} with $w = w_n$, we obtain
\begin{equation}\label{eq:previous unstable 2}
\hat w_n^{n+1} - w_n = \frac{\lambda_*\alpha(1)}{\xi_*''(0)}a^{q_{2n}}(1+O(\rho^n)).
\end{equation}
\corref{vertical tangency location} and \eqref{eq:vt distance 6} now implies
\begin{align}
\hat w_n^{n+1} &= \left(\frac{\lambda_*\alpha(1)}{\xi_*''(0)} + \Delta_w\right)a^{q_{2n}}(1+O(\rho^n))\nonumber\\
&= \frac{\tilde\Delta_w}{\xi_*''(0)}a^{q_{2n}}(1+O(\rho^n)).\label{eq:previous unstable 3}
\end{align}

By \eqref{eq:graph bending} and \eqref{eq:previous unstable 2}, we have
$$
\hat v_n^{n+1} - v_n = O(a^{2q_{2n}}).
$$
Thus, it follows from \corref{vertical tangency location} that
\begin{equation}\label{eq:previous unstable 4}
\hat v_n^{n+1} - \kappa_n = a^{q_{2n}}\Delta_v(1+O(\rho^n)).
\end{equation}

Finally, by applying \thmref{cap derivative} to \eqref{eq:previous unstable 3} and \eqref{eq:previous unstable 4}, we obtain
$$
\bar w_{n+1}^n = a^{q_{2n}}\bar \Delta_w(1+O(\rho^n))
\hspace{5mm} \text{and} \hspace{5mm}
\bar v_{n+1}^n - \kappa_{n+1} = a^{q_{2n}}\bar \Delta_v(1+O(\rho^n))
$$
where
$$
\bar \Delta_w:= \frac{\tilde\Delta_w}{\lambda_*\xi_*''(0)}
\hspace{5mm} \text{and} \hspace{5mm}
\bar \Delta_v := \frac{\Delta_v}{\lambda_*^2}.
$$
\end{proof}

\begin{figure}[h]
\centering
\includegraphics[scale=0.24]{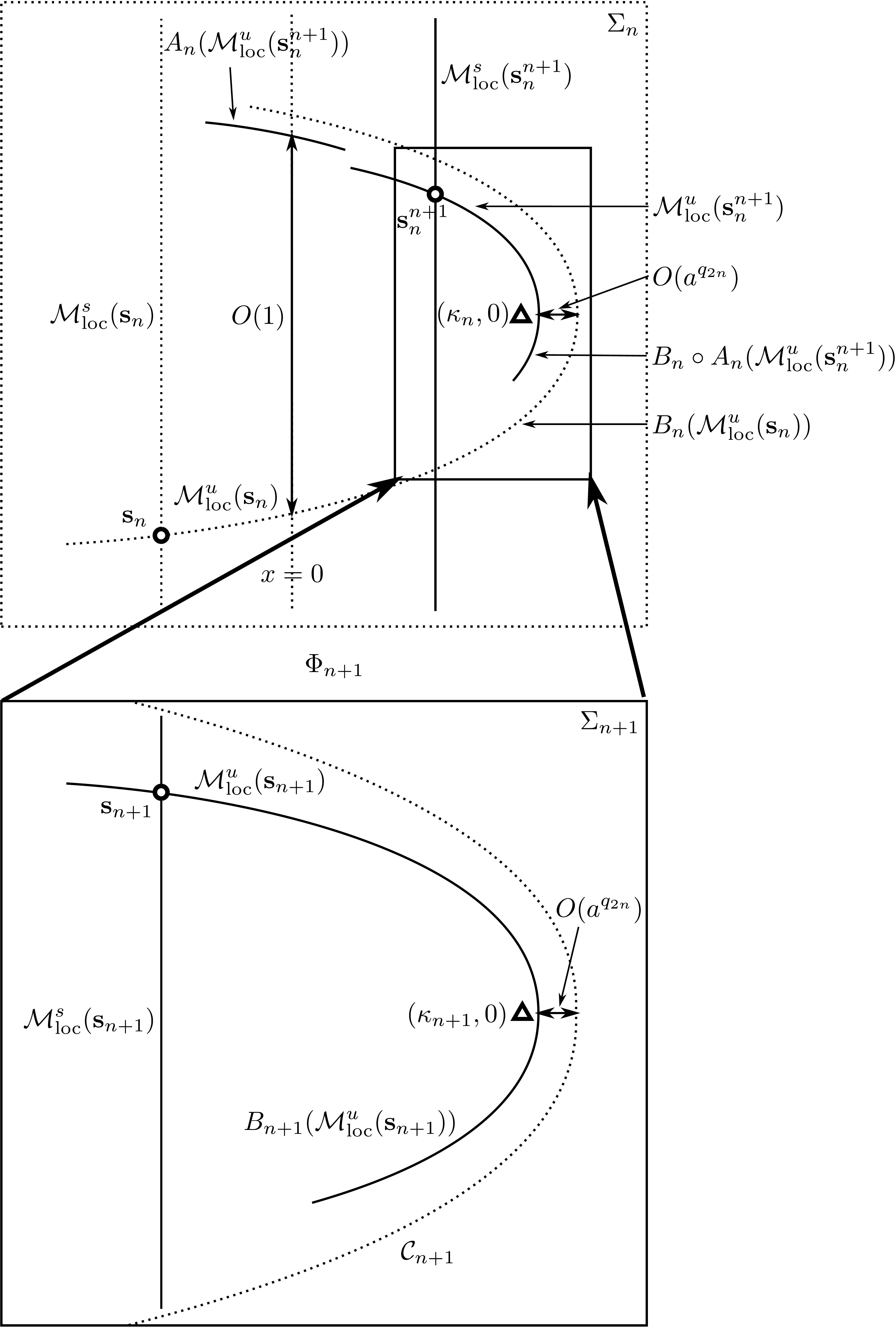}
\caption{For the pair $\Sigma_n$, we have the unstable manifolds $\cM^u_{\loc}(\bfs_n)$ and $\cM^u_{\loc}(\bfs_n^{n+1})$, and their images $B_n(\cM^u_{\loc}(\bfs_n))$, $A_n(\cM^u_{\loc}(\bfs_n^{n+1}))$ and $B_n \circ A_n(\cM^u_{\loc}(\bfs_n^{n+1}))$. Under $B_n$, the $O(1)$-vertical distance between $\cM^u_{\loc}(\bfs_n)$ and $A_n(\cM^u_{\loc}(\bfs_n^{n+1}))$ near $x=0$ shrinks to a $O(a^{q_{2n}})$-horizontal distance between $B_n(\cM^u_{\loc}(\bfs_n))$ and $B_n \circ A_n(\cM^u_{\loc}(\bfs_n^{n+1}))$ near $(\kappa_n, 0)$. For the pair $\Sigma_{n+1}$, we have the unstable manifolds $B_{n+1}(\cM^u_{\loc}(\bfs_{n+1}))$ and $\cC_{n+1}$, which map into $B_n \circ A_n(\cM^u_{\loc}(\bfs_n^{n+1}))$ and $B_n(\cM^u_{\loc}(\bfs_n))$ respectively under $\Phi_{n+1}$.}
\label{fig:verticaltangent}
\end{figure}

\section{Heteroclinic Tangency}

Consider the $n$th renormalization $\bfR^n(F_a) = \Sigma_n = (A_n, B_n)$ of the semi-Siegel H\'enon map $F_a$. The sequence of saddle points $\bfs_n^{n+k}$, which are fixed by the iterate $pB_n^{n+k}$ of $\Sigma_n$, converges to the $n$th fold $(\kappa_n, 0)$. Recall that by \propref{higher period stable}, the local stable manifold $\cM^s_{\loc}(\bfs_n^{n+k})$ is the graph of the map
$$
y \mapsto \psi_n^{n+k}(y)
\hspace{5mm} \text{for} \hspace{5mm}
y \in V_n,
$$
where $\|(\psi_n^{n+k})'\| = O(a^{q_{2n}})$. See \figref{fig:heterotangent}.

Next, consider the set $\cC_n$ defined in \eqref{eq:previous unstable}. By \corref{previous unstable}, $\cC_n$ is the graph of the map
$$
y \mapsto \tau_n(y)
\hspace{5mm} \text{for} \hspace{5mm}
y \in \lambda_*^{-1}\cU,
$$
where $\tau_n$ is $O(a^{q_{2(n-1)}})$-close to $\xi_n$. Recall that there is a unique point
$$
\bar \bfv_n^{n-1} = (\bar v_n^{n-1}, \bar w_n^{n-1})
$$
in $\cC_n$ that has a vertical tangency. See \figref{fig:heterotangent}.

\begin{figure}[h]
\centering
\includegraphics[scale=0.3]{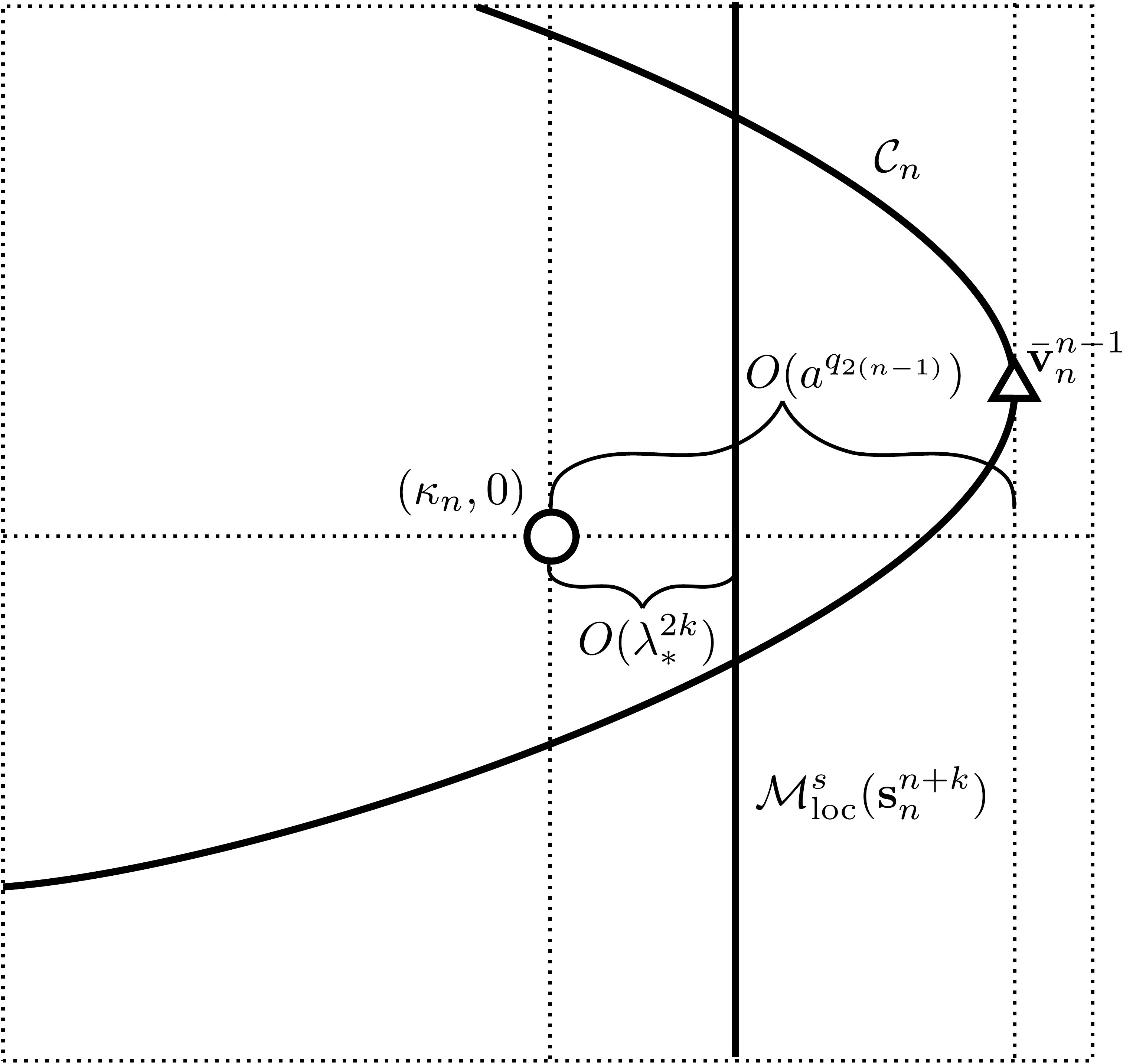}
\caption{The stable manifold $\cM^s_{\loc}(\bfs_n^{n+k})$, which is nearly vertical, and the unstable manifold $\cC_n$, which is quadratic. The horizontal position of $\cM^s_{\loc}(\bfs_n^{n+k})$ is given by \propref{higher period location}, and the position of the vertical tangency $\bar \bfv_n^{n-1}$ in $\cC_n$ is given by \corref{previous unstable}.}
\label{fig:heterotangent}
\end{figure}

Observe that a point
$$
\bfq_{n,k} = (q_{n,k}, r_{n,k})
$$
is a heteroclinic intersection between $\cM^s_{\loc}(\bfs_n^{n+k})$ and $\cC_n$ if and only if
$$
q_{n,k} = \psi_n^{n+k}(r_{n,k}) = \tau_n(r_{n,k}).
$$
Moreover, $\bfq_{n,k}$ is a heteroclinic tangency if and only if additionally, we have
$$
(\psi_n^{n+k})'(r_{n,k}) = \tau_n'(r_{n,k}).
$$

\begin{proof}[Proof of the Main Theorem]
Observe
$$
\|\tau_n''\| = \|\xi_n''\| + O(a^{q_{2(n-1)}}) > C
$$
for some uniform constant $C>0$, while
$$
\|(\psi_n^{n+k})'\| = O(a^{q_{2n}}).
$$
Since the vertical tangency $\bar w_n^{n-1}$ is the unique point such that
$$
\tau_n'(\bar w_n^{n-1}) = 0,
$$
there exists a unique point $r_{n,k}$ which is $O(a^{q_{2n}})$-close to $\bar w_n^{n-1}$ such that
$$
(\tau_n)'(r_{n,k}) - (\psi_n^{n+k})'(r_{n,k}) = 0.
$$
Moreover, by \eqref{eq:graph bending} with $B_n^\chi = \tau_n$ and $w = \bar w_n^{n-1}$, we see that
\begin{equation}\label{eq:main thm 1}
\tau_n(r_{n,k}) - \bar v_n^{n-1} = O(a^{2q_{2n}}).
\end{equation}

By \corref{previous unstable} and \eqref{eq:main thm 1}, we have
$$
\tau_n(r_{n,k}) = \kappa_n + a^{q_{2(n-1)}}\bar{\Delta}_v(1+O(\rho^n)).
$$
Moreover, by \propref{higher period stable} and \ref{higher period location}, we have
$$
\psi_n^{n+k}(r_{n,k}) = \kappa_n + \lambda_*^{2k}u_*(x_*-1)(1+O(\rho^n)+O(\rho^k)).
$$
Therefore,
\begin{equation}\label{eq:main thm 2}
\tau_n(r_{n,k}) = \psi_n^{n+k}(r_{n,k}) \iff a^{q_{2(n-1)}} = \lambda_*^{2k}\frac{u_*(x_*-1)}{\bar{\Delta}_v}(1+O(\rho^n)+O(\rho^n)).
\end{equation}
Observe that the set of solutions to \eqref{eq:main thm 2} as $n$ and $k$ run through $\bbN$ forms a discrete dense subset $X$ of  $\bbD_{\bar \epsilon} \setminus \{0\}$. This means that in any open set $U \subset \bbD_{\bar \epsilon} \setminus \{0\}$, there exists a parameter $a_1 \in U \cap X$ such that the semi-Siegel H\'enon map $F_{a_1}$ has a heteroclinic tangency $\bfq_{n,k}$ that does not persist in any neighborhood of $a_1$. By \thmref{weak stability} v), it follows that the semi-Siegel family $(F_a)_{a \in \bbD_{\bar\epsilon}\setminus\{0\}}$ is not weakly $J^*$-stable at every parameter.
\end{proof}

\end{document}